\documentclass[11pt,letterpaper]{amsart}
\usepackage{amsmath}
\numberwithin{equation}{section}
\usepackage{amssymb,esint,hyperref}
\usepackage{amscd}
\usepackage{xspace}
\usepackage{fancyhdr}
\usepackage{color}
\usepackage{verbatim}
\usepackage{cite}
\usepackage{stmaryrd}
\usepackage{bbm}
\usepackage{mathrsfs}
\usepackage{graphicx}
\setcounter{MaxMatrixCols}{30}
\usepackage{srcltx} 

\setlength{\oddsidemargin}{0.1in}
\setlength{\evensidemargin}{0.1in}
\setlength{\textwidth}{16cm}
\newtheorem{theorem}{Theorem}[section]


\newtheorem{lemma}[theorem]{Lemma}

\newtheorem{remark}[theorem]{Remark}

\catcode`\@=11\global\let\AddToReset=\@addtoreset
\AddToReset{equation}{section}

\AddToReset{theorem}{section}

\title{The Muskat problem with a large slope}
\author{Yiran Xu}
\thanks{E-mail address: yrxu20@fudan.edu.cn, Fudan University, 220 Handan Road, Yangpu, Shanghai, 200433, China.}
\author{Stephen Cameron}
\thanks{E-mail address: scameron@math.uchicago.edu,Department of Mathematics, University of Chicago, 5734 S. University Ave., Chicago, IL 60637}
\author{Ke Chen}
\thanks{E-mail address: k1chen@polyu.edu.cn, Department of Applied Mathematics, The Hong Kong Polytechnic University, Kowloon, Hong Kong, PR China.}
\author{Ruilin Hu}\thanks {E-mail address: huruilin16@mails.ucas.ac.cn,  Academy of Mathematics and Systems Science, Chinese Academy of Sciences, Beijing, 100190, China.}
\author{Quoc-Hung Nguyen}\thanks{E-mail address: qhnguyen@amss.ac.cn, Academy of Mathematics and Systems Science, Chinese Academy of Sciences, Beijing, 100190, China.}
\begin{document}
	\maketitle
	\begin{abstract}	
 In this paper, we establish local well-posedness results for the Muskat equation in any dimension using modulus of continuity techniques. By introducing a novel quantity  \(\beta_\sigma(f_0')\) which encapsulates local monotonicity and slope, we identify a new class of initial data within  \(W^{1,\infty}(\mathbb{R}^d)\). This includes scenarios where the product of the maximal and minimal slopes is large, thereby guaranteeing the local existence of a classical solution.\\
	\end{abstract}
	\section{Introduction and Main Results}
	
	The Muskat equation (see \cite{darcy1856fontaines}, \cite{Mus34}) models the flow of two incompressible, immiscible fluids within a porous medium, originally formulated to describe the dynamics of oil-water interfaces in petroleum engineering. Beyond oil recovery, the Muskat equation finds applications in environmental engineering and water resource management, where understanding fluid interface evolution is critical. Mathematically, the Muskat problem is represented by a nonlinear, fractional-order degenerate parabolic equation that captures the evolution of the interface along with complex fluid dynamics phenomena, including diffusion, convection, and gravity.

This equation bears similarities to models such as the Hele-Shaw flow and the surface quasi-geostrophic (SQG) equation, both of which describe nonlinear interface evolution and instability, reflecting common features of interface-driven systems in fluid dynamics.
	
The flow of fluids within porous media is generally governed by Darcy's Law, represented by the system	\begin{equation*}
		\left\{
		\begin{aligned}
			&\partial_t\rho+\text{div}(\rho v)=0,\quad\text{	(transport equation)}\\
			& \text{div}(v)=0,\quad \quad \quad \ \quad  \text{(incompressible condition)}\\
			&v+\nabla (P+\rho gy)=0,\quad~\text{	(Darcy's law)}
		\end{aligned}
		\right.
	\end{equation*}
	where $g>0$ is the acceleration of gravity. Darcy's law establishes a relationship between the fluid velocity $v$ and the pressure gradient $P$ across the porous medium. The Muskat problem, rooted in this flow model, investigates the dynamics and evolution of the interface separating two immiscible fluids within such media.

Consider a time-dependent curve $\Sigma(t)$ that separates two domains $\Omega_1(t)$ and $\Omega_2(t)$. If we assume that $\Sigma(t)$ can be represented as the graph of a function, we define the domains and interface as follows: 
\begin{align*}
\Omega_1(t)&=\left\{ (x,y)\in \mathbb{R}^d\times \mathbb{R}\,;\, y>f(t,x)\right\},\\
\Omega_2(t)&=\left\{ (x,y)\in \mathbb{R}^d\times \mathbb{R}\,;\, y<f(t,x)\right\},\\
\Sigma(t)&=\left\{ (x,y)\in \mathbb{R}^d\times \mathbb{R}\,;\, y=f(t,x)\right\}.
\end{align*}

We assume that each domain $\Omega_j$, $j=1,2$,  is occupied by an incompressible fluid with constant density $\rho_j$,  where $\rho=\rho(t,x)$ takes the value $\rho_j$ in $\Omega_j(t)$. Assuming $\rho_2>\rho_1$, the heavier fluid lies beneath the lighter one.

The reduction of the Muskat problem to an evolution equation for the free surface parameterization $f(t,x)$ has been well-studied (see~\cite{CaOrSi-SIAM90,EsSi-ADE97,Siegel2004}). Córdoba and Gancedo \cite{Cordoba2007Contour} provided a streamlined formulation, showing that the problem can be reformulated as an evolution equation for the free surface elevation:
	
	\[
	\partial_t f(t,x) = \frac{\rho_2 - \rho_1}{2^d \pi} \text{P.V.} \int_{\mathbb{R}^d} \frac{\alpha \cdot \nabla_x \Delta_\alpha f(t,x)}{\langle \Delta_\alpha f(t,x) \rangle^{d+1}} \frac{d\alpha}{|\alpha|^d},
	\]
	where $\Delta_\alpha f(x) = \frac{f(x) - f(x - \alpha)}{|\alpha|}$, and the integral is interpreted in the sense of principal values.

	In this paper, we focus on the stable regime where $\rho_2 > \rho_1$. To simplify the equation, we set $\frac{\rho_2 - \rho_1}{2^d \pi} = 1$, leading to:
	\begin{align}\label {E1}	
		\partial_t f(t,x) = \text{P.V.} \int_{\mathbb{R}^d} \frac{\alpha \cdot \nabla_x \Delta_\alpha f(t,x)}{\langle \Delta_\alpha f(t,x) \rangle^{d+1}} \frac{d\alpha}{|\alpha|^d}.
	\end{align}
	This equation exhibits invariance under the scaling transformation $f(t,x) \to f_\lambda(t,x) = \frac{1}{\lambda} f(\lambda t, \lambda x)$. A straightforward computation shows that the function spaces $\dot{H}^{1 + \frac{d}{2}}(\mathbb{R}^d)$ and $\dot{W}^{1,\infty}(\mathbb{R}^d)$ are critical for the well-posedness of the Cauchy problem.

There are numerous significant results concerning the Cauchy problem for the Muskat equation with graph interfaces, including global existence under smallness conditions and blow-up scenarios for sufficiently large initial data. These studies capture various singularity formations, such as overhanging interfaces leading to loss of regularity, as demonstrated by Castro et al.\cite{Fefferman2, Fefferman1}, and stability switches, where interfaces initially turn but eventually return to equilibrium, as shown by Córdoba, Gómez-Serrano, and Zlatos \cite{Cor1, Cor2}. Additionally, splash singularities in the one-phase setting have been documented in~\cite{Fefferman1b}, while Shi~\cite{shi, shib} has explored the analyticity of solutions that develop overturned interfaces. For an extensive background on these topics, we refer readers to the excellent surveys by Gancedo~\cite{Gancedo} and Granero-Belinchón-Lazar~\cite{BelinchLazar}.
Foundational work on local well-posedness includes contributions from Yi~\cite{Yi2003}, Ambrose~\cite{Ambrose2004, Ambrose2007}, Córdoba and Gancedo~\cite{Cordoba2007Contour, Cordoba2009}, and Cheng, Granero-Belinchón, and Shkoller~\cite{21}.
	More recently, local well-posedness in sub-critical spaces has been established by Constantin et al.\cite{25} in Sobolev spaces $W^{2,p}(\mathbb{R})$, Ables-Matioc\cite{Ables-Matioc} in $W^{s,p}$ for $s > 1 + \frac{1}{p}$, and Nguyen-Pausader~\cite{43}, Matioc~\cite{Matioc1, Matioc2}, and Alazard-Lazar~\cite{Alazard2020Paralinearization} in $H^s(\mathbb{R})$ with $s > 3/2$.  In the $L^\infty$ setting, it was proved in \cite{KC1} that the 2D Muskat problem is locally well-posed for any initial data in $C^1$.  For further developments, we refer readers to \cite{Self} for self-similar solutions with small initial data, to \cite{Susanna} for the desingularization of small moving corners and to \cite{TH5, peskin, schauder} for broader studies on non-local parabolic equations with structures similar to the Muskat problem.
		
		Due to the parabolic nature of the Muskat equation, these local results imply global existence under smallness assumptions. 
We mention some global existence results with some medium size data. 		Constantin, C{\'o}rdoba, Gancedo, Rodr{\'\i}guez-Piazza 
		and Strain\cite{Constantin2010, Constantin2013, 25} obtained global well-posedness under the condition that the Lipschitz semi-norm of the initial data is less than 1, and proved global classical solutions for initial data in $H^3(\mathbb{R})$ with Wiener norm $\|f_0\|_{\mathcal{L}^{1,1}}=\||\xi|\hat f_0(\xi)\|_{L^1_\xi(\mathbb{R})}$ below a certain threshold (improved in \cite{Constantin2010} to $\frac{1}{3}$). For the 3D case, global classical solutions were shown under $\|f_0\|_{\mathcal{F}^{1,1}}\leq \frac{1}{5}$, with further extensions in \cite{viscojump} to cases involving fluids with differing viscosities. Using modulus of continuity, the second author~\cite{Cameron2020,Cameron2019} also obtained global existence in both 2D and 3D for data with medium size slope, where the upper bounded of slope is 1 in 2D and $5^{-\frac{1}{2}}$ in 3D.
		
		Global solutions with large slopes were later achieved. Deng, Lei, and Lin~\cite{33} proved global existence of weak solution in 2D under monotonicity assumptions. Córdoba and Lazar~\cite{Cordoba-Lazar-H3/2} proved global existence of strong solution in  $\dot H^{\frac{5}{2}} \cap \dot H^{\frac{3}{2}}$ with small critical $\dot H^{\frac{3}{2}}$ norm, and similar result in 3D with small $\dot H^{2}$ critical norm was done by Gancedo and Lazar~\cite{Gancedo}.
			More recently, in a series of works \cite{ThomasHfirst, Alazard2020, Alazard2020endpoint, TH4}, Alazard and the fifth author  demonstrated well-posedness for the Muskat equation with critical initial data that allow  unbounded slopes. Particularly in \cite{Alazard2020endpoint}, leveraging null-type structures to handle the equation's degeneracies, they proved global well-posedness for small initial data in $\dot{H}^{\frac{3}{2}}$ and local well-posedness for large initial data, which is the optimal result in 2D.

	In the present work, we consider the Muskat problem with a large slope. 
	The main techniques in this paper was introduced  in \cite{Kiselev} to prove the global well-posedness of 2D dissipative quasi-geostrophic equation. The equation reads 
	\begin{align*}
		\theta_t-u\cdot \nabla \theta+(-\Delta )^{\frac{1}{2}}\theta=0.
	\end{align*}
	The main idea is to	construct a special
	family of modulus of continuity that are preserved by the dissipative evolution,
	which will lead to a priori estimate for $\|\nabla\theta\|_{L^\infty}$ independent of
	time. To do this, they assume that $\theta$ has  modulus of continuity $\omega$ for any $t<T$, which means that 
	$$|\theta(t,x)-\theta(t,y)|\leq \omega(|x-y|), \ \ \forall 0\leq t<T,
	$$
	for some increasing continuous concave function $\omega: [0,+\infty)\to [0,+\infty)$ such that $\omega(0)=0$.
	By an elementary discussion, they found that the only scenario in which the modulus of continuity $\omega$
	may be lost by $\theta$ after $T$ is the one in which there exist two points $x\neq y$ such that $\theta(T,x)-\theta(T,y)=\omega(|x-y|)$. Then they designed a special function $\omega$ such that  $$\left.\frac{\partial}{\partial t}(\theta(t,x)-\theta(t,y))\right|_{t=T}<0.$$  This contradicts the assumption that the modulus of
	continuity $\omega$ is preserved up to the time $T$. Hence $\theta$ has the  modulus of
	continuity $\omega$ for all times $t\in[0,+\infty)$.

	Applying the techniques in \cite{Kiselev}, the second author proved global well-posedness results in both 2D \cite{Cameron2019} and 3D \cite{Cameron2020}. In 2D, due to the special structure of the equation, he proved the global wellposedness for initial data $f_0\in W^{1,\infty}(\mathbb{R})$ satisfying 
	\begin{align}\label{conste}
		\left(\sup _{x \in \mathbb{R}} f_{0}^{\prime}(x)\right)\left(\sup _{y \in \mathbb{R}}-f_{0}^{\prime}(y)\right)<1.
	\end{align}
	We remark that this condition can be considered as an interpolation between the slope less than 1 assumption in \cite{Constantin2013} and the monotonicity assumption in \cite{33}. Following the scheme in \cite{Kiselev} and using a rescaling argument,  they proved that 
	\begin{align*}
		\left.\frac{d}{dt}(f_x(t,x)-f_x(t,y))\right|_{t=T}<\left.\frac{d}{dt}\left(\rho\left(\frac{x-y}{t}\right)\right)\right|_{t=T},
	\end{align*}
	which is a key to obtain contradiction.
	Hence 	there exists global classical solution satisfying 
	$$
	f_{x}(t, x)-f_{x}(t, y) \leq \rho\left(\frac{|x-y|}{t}\right).
	$$
	We note the sole reason to require \eqref{conste} is to ensure the ellipticity of the kernel $K$ as we defined in \eqref{defK}. In this paper, we prove a local version of the results in \cite{Cameron2019}. More precisely, we propose a local version (see \eqref{defbet}) of the condition \eqref{conste}. Under this condition, the kernel $K$ may not be elliptic, but the main part $K_1$ (see Section \ref{secmain} for definition) is still elliptic. We note that in our proof, the derivative $\left.\frac{\partial}{\partial t}(f(t,x)-f(t,y))\right|_{t=T}$ may be positive due to some remainder terms associated to the  kernel $K-K_1$. To control these terms, we introduce the modulus of
	continuity in the form $\rho(\cdot/t)e^{\tilde C t}$, where  $\tilde C$ is a fixed positive constant. 
	To obtain contradiction as in \cite{Kiselev} and \cite{Cameron2019}, it suffices to prove that 
	\begin{align}\label{contrad}
		\left.\frac{d}{dt}(f_x(t,x)-f_x(t,y))\right|_{t=T}<\left.\frac{d}{dt}\left(\rho\left(\frac{x-y}{t}\right)\right)\right|_{t=T}+\tilde C \rho\left(\frac{x-y}{T}\right).
	\end{align}
	Note that the term $$\tilde C \rho\left(\frac{x-y}{T}\right)$$ is devoted to absorb remainder terms associated to the  kernel $K-K_1$. We remark that we prove \eqref{contrad} directly  instead of using the rescaling argument.
	
	In 3D, the second author proved the global well-posedness in \cite{Cameron2020} under the condition $\|\nabla f_0\|_{L^\infty}\leq 5^{-\frac{1}{2}}$. In this paper, we also prove local well-posedness for high dimension $d\geq 2$ with initial data satisfying $$	\lim_{\varepsilon\to 0}\|\nabla f_0-\nabla f_0\ast \phi_\varepsilon\|_{L^\infty}\leq c_d,$$ where $\phi_\varepsilon=\varepsilon^{-d}\phi(\cdot/\varepsilon)$ is the standard mollifier. We remark that our results hold for any $c_d<\frac{1}{4(d+1)}$. Specifically, in 3D, the constant can be improved to $5^{-\frac{1}{2}}$, which is consistent with the global result in \cite{Cameron2020}.
	
	We introduce some notations that will be used throughout the paper.  Fix a standard symmetric cutoff function $\chi$ such that 
	\begin{align*}
		\chi(x)=\begin{cases}
			1, \ \ &\text{if}\ |x| \leq 1,\\
			0,\ \ & \text{if}\ |x|>2,
		\end{cases}
	\end{align*}
	with  $ |\chi^{(k)}|\leq 2^k.$ 	
	For $\sigma>0$ and $z\in \mathbb{R}$, define
	\begin{align*}
		\chi_{\sigma, z} = \chi\left(\frac{x-z}{\sigma}\right), 
	\end{align*}
	and 
	\begin{align}\label{defbet}
		\beta_\sigma(g)=\sup_{z\in \mathbb{R} }\left(\sup_{x}(\chi_{\sigma, z} g)(x)\sup_{y}(- \chi_{\sigma, z}g)(y)\right).
	\end{align}
	\begin{theorem}\label{thm}
		Let $f_0\in W^{1,\infty}(\mathbb{R})$ with 
		\begin{align}\label{inibe}
			\beta_\sigma(f_0')\leq 1-\varepsilon_0,
		\end{align}
		for some $\sigma, \varepsilon_0>0$. 
		Then there exists $T_0>0$ and a unique classical solution 
		\begin{align*}
			f\in C([0,T_0]\times \mathbb{R})\cap C^{\infty}_{loc}((0,T_0]\times \mathbb{R})
		\end{align*}
		satisfying $\sup_{t\in[0,T_0]}\beta_\sigma(f_x(t))<1$ and
		$$
		\sup_x|\delta_\alpha f_x(t,x)|\leq C_0t^{-1}|\alpha|\ \ \ t\in(0,T_0],\ \forall\alpha\neq 0,
		$$ 
		where $C_0$ is a constant depending on $\|f_0\|_{L^\infty}, \|\partial_x f_0\|_{L^\infty}$, $\varepsilon_0$ and $\sigma$.
	\end{theorem}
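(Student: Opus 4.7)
The plan is to adapt the Kiselev--Cameron modulus of continuity strategy outlined in the introduction, combined with a smoothing and compactness argument to handle the $W^{1,\infty}$ regularity of $f_0$. First, regularize $f_0$ by mollification $f_0^{(n)}=f_0*\phi_{1/n}$, arranged so that $\beta_\sigma\bigl((f_0^{(n)})'\bigr)\le 1-\varepsilon_0/2$ for all $n$ large (a standard argument using the monotonicity of $\beta_\sigma$ in $\sigma$). Classical local well-posedness for smooth data provides smooth solutions $f^{(n)}$ of \eqref{E1} on short time intervals. The core task is to extract uniform bounds on $f^{(n)}$ on a common interval $[0,T_0]$, pass to a classical limit, and verify uniqueness.

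The uniform Lipschitz bound on $f_x^{(n)}(t,\cdot)$ is obtained by constructing a family of time-dependent moduli of continuity
\[
\omega_t(\xi) := \rho(\xi/t)\,e^{\tilde C t}, \qquad t>0,
\]
where $\rho:[0,\infty)\to[0,\rho_\infty]$ is fixed, increasing, concave, and linear near the origin, and $\tilde C,\rho_\infty>0$ are constants chosen in terms of $\|f_0\|_{L^\infty},\|f_0'\|_{L^\infty},\varepsilon_0,\sigma$. The cap $\rho_\infty$ is tuned so that the preserved inequality $f_x^{(n)}(t,x)-f_x^{(n)}(t,y)\le \omega_t(|x-y|)$ forces $\beta_\sigma(f_x^{(n)}(t))\le 1-\varepsilon_0/4$, while the linearity of $\rho$ near the origin yields the $C_0 t^{-1}|\alpha|$ bound claimed in the theorem. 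The $L^\infty$ norm of $f^{(n)}$ is controlled separately by the maximum principle for \eqref{E1}.

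To show preservation of $\omega_t$ on an interval $[0,T_0]$ uniform in $n$, I argue by contradiction at a first breakthrough time $T\in(0,T_0]$ with $f_x^{(n)}(T,x)-f_x^{(n)}(T,y)=\omega_T(|x-y|)$ for some $x\ne y$. Choosing $\rho_\infty>2\|f_0'\|_{L^\infty}$ and $T_0$ small, one may assume that the breakthrough pair satisfies $|x-y|\le\sigma$. Computing $\partial_t(f_x^{(n)}(t,x)-f_x^{(n)}(t,y))|_{t=T}$ from the derivative of \eqref{E1} yields a non-local expression governed by a kernel $K$ depending on $f^{(n)}$. Split $K=K_1+(K-K_1)$ with $K_1$ the piece supported on $|h|\le\sigma$: the standing bound $\beta_\sigma(f_x^{(n)})\le 1-\varepsilon_0/4$ makes $K_1$ elliptic in the sense of \cite{Cameron2019} and produces the main inequality
\[
(K_1[f_x^{(n)}])(x)-(K_1[f_x^{(n)}])(y)\le \partial_t\rho(|x-y|/t)\bigr|_{t=T},
\]
while for the outer remainder one needs
\[
\bigl|(K-K_1)[f_x^{(n)}](x)-(K-K_1)[f_x^{(n)}](y)\bigr|\le \tilde C\,\rho(|x-y|/T).
\]
Summing yields \eqref{contrad} and the contradiction.

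The principal obstacle is the remainder estimate: a crude $L^\infty$ bound on the kernel $K-K_1$ gives only a constant, so one must extract an extra factor of size $|x-y|$ (equivalently of size $\rho(|x-y|/T)$) from the difference evaluated at the two base points. The plan is to symmetrize the integral in $h\mapsto -h$ and Taylor expand the integrand between $x$ and $y$, using $|x-y|\le\sigma$ together with the $W^{1,\infty}$ bound on $f^{(n)}$ to produce a factor $|x-y|$ times a manageable integral outside $|h|=\sigma$; the constant $\tilde C$ is then chosen large enough to absorb the resulting term. Once the uniform modulus is established, interior parabolic bootstrapping from the $t^{-1}$ Lipschitz bound supplies $C^\infty_{\mathrm{loc}}$ regularity, Arzel\`a--Ascoli yields a classical limit $f$ satisfying all stated bounds, and uniqueness in the class $\sup_t\beta_\sigma(f_x(t))<1$ follows from a Gronwall argument on $\|f_1-f_2\|_{L^\infty}$ using the ellipticity provided by the slope product condition.
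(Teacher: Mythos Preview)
Your plan contains a genuine circularity that breaks the argument. You write that ``the cap $\rho_\infty$ is tuned so that the preserved inequality $f_x^{(n)}(t,x)-f_x^{(n)}(t,y)\le \omega_t(|x-y|)$ forces $\beta_\sigma(f_x^{(n)}(t))\le 1-\varepsilon_0/4$.'' This cannot work when the slope is large. You also require $\rho_\infty>2\|f_0'\|_{L^\infty}$ so that the modulus is trivially satisfied at $t=0^+$; but then for small $t$ one has $\omega_t(4\sigma)=\rho(4\sigma/t)e^{\tilde Ct}\approx\rho_\infty$, which gives no better control on the oscillation of $f_x$ over a $\sigma$-ball than the raw Lipschitz bound. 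If $\|f_0'\|_{L^\infty}\ge 1$ (which is exactly the large-slope regime the theorem targets), the modulus bound cannot yield $\beta_\sigma<1$. Yet the ellipticity of $K_1$---on which your entire breakthrough computation rests---requires $\beta_\sigma<1$ a priori. So the modulus argument presupposes the very condition you claim to derive from it.

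The paper closes this loop by running three bootstrap hypotheses in parallel and propagating each by a separate mechanism: the Lipschitz bound $\|f_x(t)\|_{L^\infty}\le 2\|f_0'\|_{L^\infty}$ (Lemma~\ref{lipest}), the localized slope-product bound $\beta_\sigma(f_x(t))\le 1-\tfrac{3}{4}\varepsilon_0$ (Lemma~\ref{beta}), and the modulus of continuity (Lemma~\ref{lemmain}). The crucial missing ingredient in your outline is the analogue of Lemma~\ref{beta}: one multiplies the equation for $f_x$ by the cutoff $\chi_{\sigma,z}$, evaluates at the spatial maximum of $\chi_{\sigma,z}f_x$, and estimates the resulting commutator terms ($I_1$--$I_3$ in the paper) to obtain $\tfrac{d}{dt}\beta_\sigma(t)\lesssim t^{-1/2}$, integrable near $t=0$. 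This is an independent, nontrivial computation that cannot be subsumed into the modulus argument. A secondary issue: your approximation $f_0*\phi_{1/n}$ does not give compactly supported derivative, so the decay $f_x,f_{xx}\to0$ at infinity (needed to locate the breakthrough pair via Lemma~\ref{strict}) is not guaranteed; the paper uses $f_{0\varepsilon}'=(f_0'\chi_{\varepsilon^{-1}})*\phi_\varepsilon$ precisely for this reason.
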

	The following lemma provides an example of initial data that satisfies \eqref{inibe} and demonstrates that the product of the maximum and minimum slopes can be made arbitrarily large.
	\begin{lemma}\label{zz}
		Let $\{a_k\}_{k=-\infty}^{+\infty}\subset \mathbb{R}$ be any sequence satisfying $\inf_k (a_k-a_{k-1})\geq \mu_0$ for some $\mu_0>0$. Denote $I_k=(a_k,a_{k+1})$. Then Theorem \ref{thm} holds if the initial data $f_0\in W^{1,\infty}(\mathbb{R})$ satisfies 
		\begin{align*}
			&	\sup_k (\sup_{x\in I_k}f_0'(x)\sup_{y\in I_k}(-f_0'(y)))\leq 1-\varepsilon_0,\\
			&\liminf_{\varepsilon\to 0}\left(	\sup_k \sup_{|x-a_k|\leq \mu_1}|f_0'-f_0'\ast \phi_\varepsilon|(x)\right)\leq 1-\varepsilon_0,
		\end{align*}
		for some $\varepsilon_0>0$ and $\mu_1\in (0,\mu_0/100)$.
	\end{lemma}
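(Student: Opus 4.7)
The plan is to verify the hypothesis of Theorem \ref{thm}---namely $\beta_{\sigma'}(f_0')\leq 1-\varepsilon_0'$ for some $\sigma',\varepsilon_0'>0$---by a case analysis on the localization window of $\chi_{\sigma',z}$. First I would fix $\delta\in(0,\varepsilon_0/4)$ and use the $\liminf$ hypothesis to pick $\varepsilon>0$ such that $g_\varepsilon:=f_0'\ast\phi_\varepsilon$ satisfies $\|f_0'-g_\varepsilon\|_{L^\infty(\bigcup_k[a_k-\mu_1,a_k+\mu_1])}\leq 1-\varepsilon_0+\delta$. Once $\varepsilon$ is fixed, $g_\varepsilon$ has a finite Lipschitz constant $\|g_\varepsilon'\|_{L^\infty}\lesssim\varepsilon^{-1}\|f_0'\|_{L^\infty}$, so I can then pick $\sigma'$ small enough that $4\sigma'<\mu_1$ (hence also $<\mu_0$, since $\mu_1<\mu_0/100$) and $2\sigma'\|g_\varepsilon'\|_{L^\infty}<\delta$. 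The first condition ensures that each window $[z-2\sigma',z+2\sigma']$ either lies inside a single $I_k$ (Case A) or contains exactly one $a_k$ and lies inside $[a_k-\mu_1,a_k+\mu_1]$ (Case B); the second ensures that $g_\varepsilon$ oscillates by at most $\delta$ across any window.

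Since $0\leq\chi\leq 1$, the defining product for $\beta_{\sigma'}(f_0')$ at $z$ is bounded by $M_+(z)^+ M_-(z)^+$, where $M_\pm(z):=\sup_{|w-z|\leq 2\sigma'}(\pm f_0'(w))$ and $X^+:=\max(X,0)$. In Case A, the inclusion $\{|w-z|\leq 2\sigma'\}\subset I_k$ gives $M_\pm(z)\leq\sup_{I_k}(\pm f_0')$, so the first hypothesis yields $M_+(z)^+ M_-(z)^+\leq 1-\varepsilon_0$ (trivially when a factor is non-positive). In Case B, the decomposition $f_0'=g_\varepsilon+h$ with $|h|\leq 1-\varepsilon_0+\delta$ on the window produces $M_\pm(z)\leq\pm g_\varepsilon(z)+\delta+(1-\varepsilon_0+\delta)$, hence $M_+(z)+M_-(z)\leq 2E$ with $E:=1-\varepsilon_0+2\delta<1-\varepsilon_0/2$. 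An AM-GM step (trivial when a factor is non-positive) then bounds $M_+(z)^+ M_-(z)^+\leq E^2<1-3\varepsilon_0/4$. Combining both cases, $\beta_{\sigma'}(f_0')\leq 1-3\varepsilon_0/4$, and Theorem \ref{thm} applies with $(\sigma,\varepsilon_0)$ replaced by $(\sigma',3\varepsilon_0/4)$.

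The main obstacle is Case B: individually, each of $M_+(z)$ and $M_-(z)$ can be as large as the jump of $f_0'$ across $a_k$, so any naive bound on the two factors separately would fail to beat $1$. What rescues the argument is that the smooth part contributes $+g_\varepsilon(z)$ to $M_+$ and $-g_\varepsilon(z)$ to $M_-$, so these large pieces cancel in the \emph{sum} $M_++M_-$, and AM-GM converts this sum bound into the required \emph{product} bound. The other delicate point is the order of choice of constants---$\delta$ first, then $\varepsilon$ (which pins down the otherwise divergent $\|g_\varepsilon'\|_{L^\infty}$), then $\sigma'$ small enough to beat both the geometric thresholds $\mu_0,\mu_1$ and the now-fixed Lipschitz constant $\|g_\varepsilon'\|_{L^\infty}$.
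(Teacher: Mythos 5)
Your proposal is correct and follows essentially the same route as the paper's proof: both reduce matters to verifying \eqref{inibe} at a small scale, handle windows inside a single $I_k$ via the first hypothesis, and handle windows straddling a junction $a_k$ by writing $f_0'=g_\varepsilon+h$ with $|h|\leq 1-\varepsilon_0+\delta$ near $a_k$, exploiting that the mollified part varies by at most $O(\sigma'\varepsilon^{-1})$ across the window so that it cancels in the sum $M_++M_-$, and then converting the sum bound into a product bound by AM--GM. Your write-up is if anything a bit more careful than the paper's (explicit positive parts, explicit ordering of the choices $\delta$, then $\varepsilon$, then $\sigma'$), but the mechanism is identical.
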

	
	\begin{figure}[h]\label{figure}
		\centering
		\includegraphics[width=6cm,height=4cm]{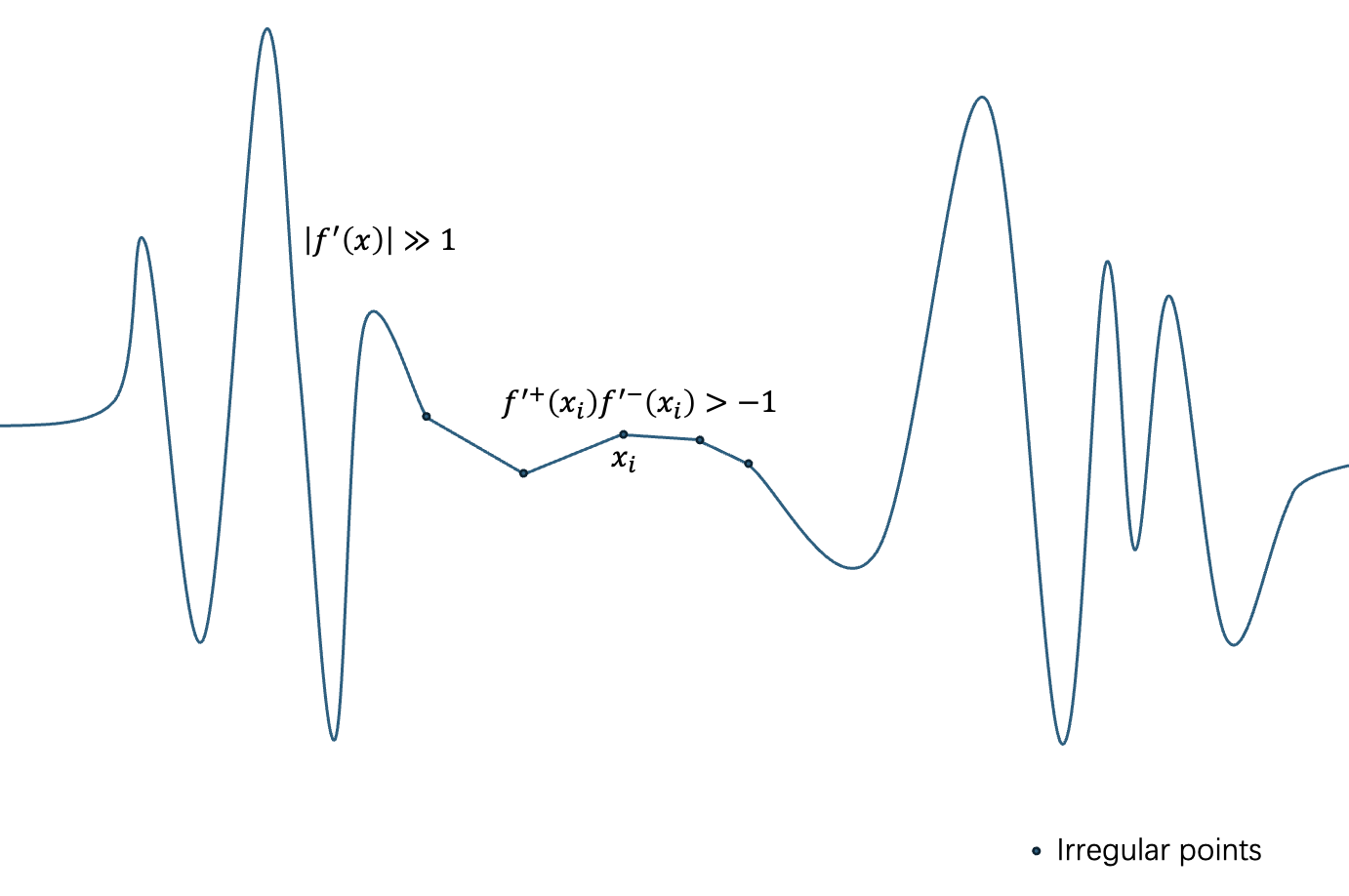}
		\caption{An example of graph}
	\end{figure}
	
	\begin{remark}
		Generally, the condition \eqref{inibe} is not met even for some smooth functions. For example, if we take $f_{0}^\prime(x)=\sin x^2$, then we don't have $\beta_\sigma (f_0^\prime)<1$ for any $\sigma>0$. However, we claim that the condition \eqref{inibe} is true for functions satisfying (see Figure\ref{figure})\\
		(i) piece-wise $C^1$ (with finite irregular points) in a finite interval $K$. \\
		(ii) the derivative is uniformly continuous outside the set $K$.\\
		(iii) $f_0^{\prime +}(x_0)f_0^{\prime -}(x_0)>-1+2\epsilon_0$ for any irregular point $x_0$.\\
		Here $f_0^{\prime +},f_0^{\prime -}$ denote the right and left hand derivatives, respectively. And the uniform continuity means that 
		$$
		\forall \epsilon>0, \exists\delta>0, \text{such that}\ |x-y|<\delta \Rightarrow |f(x)-f(y)|<\epsilon, \ \ \forall x,y\notin K.
		$$		
		
		In fact, if we let $\{x_1,...x_n\}$ be the set of irregular points, then by the condition (i), one  can take  open intervals $O_i=(x_i-\widetilde{\sigma}_i,x_i+\widetilde{\sigma}_i)$ of $x_i$ such that $f_0^{\prime +}(y)f_0^{\prime -}(z)>-1+\epsilon_0$ for any points $y,z\in O_i$. We take smaller open intervals $\widetilde{O}_i=(x_i-\frac{\widetilde{\sigma}_i}{2},x_i+\frac{\widetilde{\sigma}_i}{2})\Subset O_i$ and $f_0^\prime$ is uniformly continuous in $(K-\cup_i \widetilde{O}_i)\cup K^c$. Let $\epsilon=\frac{1-\epsilon_0}{8}$, by (ii), there exists $\delta>0$ such that $$
		|f_0'(x)-f_0'(y)|<\epsilon,\ \ \ \ \forall x,y\notin K, \ |x-y|<\delta.
		$$ 
		Then we obtain $\beta_\sigma(f_0')<1$ by taking $\sigma=\min\{\delta, \frac{\widetilde{\sigma}_i}{3}\}$ to get the result. \end{remark}
	More generally, we have the following theorem for multi-dimensional Muskat equation.
	\begin{theorem}\label{thmhd}
		For any $c_d<\frac{1}{4(d+1)}$ and  any initial data $f_0\in W^{1,\infty}(\mathbb{R}^d)$ such that 
		$$	\lim_{\varepsilon\to 0}\|\nabla f_0-\nabla f_0\ast \phi_\varepsilon\|_{L^\infty}\leq c_d,$$
		there exists $\tilde T_0>0$ and a unique local solution $f\in C([0,\tilde T_0]\times \mathbb{R}^d)\cap C^{\infty}_{loc}((0,\tilde T_0]\times \mathbb{R}^d)$  to the Cauchy problem of \eqref{E1} satisfying 
		\begin{align*}
			\sup_x |\delta_\alpha \nabla f(t,x)|\leq C_0(\|\nabla f_0\|_{L^\infty},c_d)t^{-1}|\alpha|, \ \ \ t\in (0,\tilde T_0].
		\end{align*}
	\end{theorem}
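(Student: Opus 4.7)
The plan is to extend the modulus-of-continuity machinery used for Theorem~\ref{thm} to $\mathbb{R}^d$, with the one-dimensional condition $\beta_\sigma(f_0')<1$ replaced by the small-scale oscillation bound $\lim_{\varepsilon\to 0}\|\nabla f_0-\nabla f_0*\phi_\varepsilon\|_{L^\infty}\le c_d$. The architecture is: mollify the initial data, derive an $\varepsilon$-uniform a priori bound for $\delta_\alpha\nabla f^{(\varepsilon)}$ governed by a preserved modulus of continuity, and pass to the limit. Concretely, set $f_0^{(\varepsilon)}=f_0*\phi_\varepsilon$; each $f_0^{(\varepsilon)}$ is smooth and bounded, so classical local well-posedness for \eqref{E1} produces a smooth solution $f^{(\varepsilon)}$ on some short time interval $[0,\tilde T_0]$, and we seek an estimate independent of $\varepsilon$.

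The decisive step is to show that, for a concave modulus $\rho$ which is linear with slope comparable to $c_d$ near the origin and saturates at a level of order $\|\nabla f_0\|_{L^\infty}$ at infinity, and a constant $\tilde C>0$ depending only on $\|\nabla f_0\|_{L^\infty}$ and $c_d$,
\begin{equation*}
|\delta_\alpha\nabla f^{(\varepsilon)}(t,x)|\le \rho\!\left(\frac{|\alpha|}{t}\right)e^{\tilde C t},\qquad t\in(0,\tilde T_0],
\end{equation*}
uniformly in $x,\alpha$ and $\varepsilon$. The proof follows Kiselev--Nazarov--Volberg and the second author: if the modulus is first violated at time $T$ at some pair $x\ne y$, we evaluate $\partial_t(\nabla f(t,x)-\nabla f(t,y))$ at $t=T$ using \eqref{E1} and split the kernel into a dissipative principal part $K_1$ and a remainder $K-K_1$. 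The hypothesis $c_d<\frac{1}{4(d+1)}$ is arranged so that $K_1$ is elliptic and delivers a strictly negative contribution of fractional-Laplacian type, while the remainder terms can be bounded by $\tilde C\rho(|x-y|/T)$, which the factor $e^{\tilde C t}$ is designed precisely to absorb. This yields the strict inequality
\begin{equation*}
\left.\partial_t(\nabla f(t,x)-\nabla f(t,y))\right|_{t=T}<\left.\frac{d}{dt}\!\left(\rho\!\left(\frac{|x-y|}{t}\right)e^{\tilde C t}\right)\right|_{t=T},
\end{equation*}
contradicting the definition of $T$.

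The main obstacle is the geometry of the multi-dimensional kernel: unlike in 2D, the integrand $\alpha\cdot\nabla_x\Delta_\alpha f$ does not reduce to a scalar product of fixed-sign quantities, and the null-type cancellations used in \cite{Cameron2020} must be re-examined to extract the sharp constant $\frac{1}{4(d+1)}$. The delicate point is that the global slope $\|\nabla f_0\|_{L^\infty}$ may be arbitrarily large, so the ellipticity of $K_1$ cannot be read off from $\langle\Delta_\alpha f\rangle^{d+1}$ directly; instead, one uses that the small-scale oscillation of $\nabla f$ is controlled by $c_d$ to Taylor-expand $\langle\Delta_\alpha f\rangle^{-(d+1)}$ about a locally smoothed value, then verify positivity of the resulting quadratic form at the contact pair. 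The factor $4(d+1)$ arises naturally from bounding this expansion via $(d+1)\langle\cdot\rangle^{-(d+3)}$ and symmetrizing the kernel.

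Once the uniform modulus bound is in place, Arzelà--Ascoli and a standard compactness argument pass to the limit $\varepsilon\to 0$, producing $f\in C([0,\tilde T_0]\times\mathbb{R}^d)\cap C^\infty_{loc}((0,\tilde T_0]\times\mathbb{R}^d)$ satisfying the stated estimate with $C_0=C_0(\|\nabla f_0\|_{L^\infty},c_d)$; higher regularity at positive times follows by bootstrapping from the $t^{-1}|\alpha|$ bound on $\delta_\alpha\nabla f$ using the parabolic smoothing of \eqref{E1}. Uniqueness is obtained by applying the same modulus-of-continuity argument to the difference of two solutions starting from the same data, exploiting the same ellipticity of $K_1$.
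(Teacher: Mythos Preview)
Your overall architecture---mollify, propagate a modulus of the form $\rho(|\cdot|/t)e^{\tilde C t}$, contradict at first touching, pass to the limit---matches the paper. But there is a genuine gap in how you obtain ellipticity of the principal part at positive times. You write that ``the small-scale oscillation of $\nabla f$ is controlled by $c_d$'' and use this to make $K_1$ elliptic, yet the only quantitative control you propagate is the modulus $\rho(|\alpha|/t)e^{\tilde C t}$. That bound is useless for ellipticity when $|\alpha|$ is large relative to $t$ (in particular, for $t$ small), because $\rho$ saturates at the full slope. The paper closes this gap by carrying a \emph{second} bootstrap hypothesis alongside the modulus: it fixes $\eta>0$ with $\|\nabla f_0-\nabla f_0*\phi_\eta\|_{L^\infty}\le \tfrac{5}{4}c_d$, writes $f=f_1+f_2$ with $f_2=f_0*\phi_\eta$ a fixed smooth background, and propagates $\|\nabla f_1(t)\|_{L^\infty}\le 2c_d$ by a separate maximum-principle estimate (Lemma~\ref{lemhd}). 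The nonlinearity is then split not as a kernel decomposition $K_1+(K-K_1)$ but as $R=R_1+R_2$, where $R_1$ depends on $E_\alpha f_1$ and is uniformly bounded below by $\frac{1-4(d+1)c_d}{(1+\|\nabla f_0\|_{L^\infty}+c_d)^{d+1}}>0$ for \emph{all} $\alpha$, while $R_2$ depends on the smooth $E_\alpha f_2$ and contributes the remainder terms absorbed by $\tilde C\rho$. Without an analogue of this second bootstrap your ellipticity claim is unjustified.

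Your proposed uniqueness argument---running the modulus machinery on the difference of two solutions---is also not what the paper does, and it is not clear it closes: the difference equation does not inherit the same structure that made $R_1$ elliptic. The paper instead observes that $N(a,b)=(b-a)/\langle a\rangle^{d+1}$ is nonincreasing in $a$ whenever $|a-b|\le \tfrac{2}{d+1}$, uses the propagated modulus to ensure this holds for $|\alpha|\le\rho_0$, and handles $|\alpha|>\rho_0$ by a crude Lipschitz bound and Gr\"onwall on $\|f_1-f_2\|_{L^\infty}$.
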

	
	\section{Proof of Theorem \ref{thm}}
	Let $f\in C([0,T];C^3(\mathbb{R}))$ be a solution to the Muskat equation with initial data satisfying $f_0'\in C_c^\infty (\mathbb{R})$. Assume for any $0<t\leq T$,
	\begin{align}
		&\beta_\sigma(t)\leq1-\varepsilon_0,\label{con1}\\	
		&f_x(t,x)-f_x(t,y)\leq  \rho\left(\frac{x-y}{t}\right):=\rho_t(x-y),\ \ \forall x\neq y\in\mathbb{R}, \label{con2}\\
		&\|f_x(t)\|_{L^\infty} \leq 2\|f_0'\|_{L^\infty}, \label{con3}
	\end{align} 
	where the  modulus function $\rho$ is concave and there exists a constant $C_0$ such that \begin{align}\label{conrho}
		\rho(\alpha)\leq C_0\alpha,\ \ \ \forall \alpha \geq 0.
	\end{align}
	
	We want to prove that with a specific choice of the function $\rho$,  the properties \eqref{con1}-\eqref{con3} can be extended for a short time if $T<T_0$ for some $T_0>0$ that will be specified later. 
	The following are the main lemmas:
	\begin{lemma}\label{lipest}
		Suppose \eqref{con1}-\eqref{conrho} hold. For any $M>0$, let $f$ be a solution to the Muskat equation with initial data $f_0$ satisfying $\|f_0'\|_{L^\infty}\leq M$. Then there exists $T_1=T_1(\sigma,C_0,M)>0$ such that 
		\begin{align*}
			\sup_{t\in[0,\min\{T,T_1\}]}\|f_x(t)\|_{L^\infty}<\frac{3\|f_0'\|_{L^\infty}}{2}.
		\end{align*}
	\end{lemma}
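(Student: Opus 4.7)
The plan is to run a continuation-in-time argument based on a pointwise estimate of $\partial_t f_x$ at an extremum of $f_x(t,\cdot)$. Set $M := \|f_0'\|_{L^\infty}$. The assumption $f \in C([0,T];C^3)$, together with $f_0' \in C_c^\infty(\mathbb{R})$ (which propagates into rapid decay of $f_x(t,\cdot)$ for short times), ensures that $t \mapsto \|f_x(t)\|_{L^\infty}$ is continuous and that the supremum is attained at each $t$. Define $T^* := \inf\{t \in [0,T] : \|f_x(t)\|_{L^\infty} = 3M/2\}$; if this infimum is vacuous the lemma holds with $T_1 = T$, so I may assume $T^*$ exists and aim to bound it below by some $T_1 = T_1(\sigma,C_0,M) > 0$.

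For each $s \in (0,T^*]$, choose a point $x_s \in \mathbb{R}$ at which $|f_x(s,x_s)| = \|f_x(s)\|_{L^\infty}$. After possibly replacing $f$ by $-f$ (which preserves both \eqref{E1} and the hypotheses \eqref{con1}--\eqref{conrho}, since $\beta_\sigma(g) = \beta_\sigma(-g)$ and $\rho$ bounds $|f_x(x)-f_x(y)|$ symmetrically by \eqref{conrho}), I may assume $f_x(s,x_s) = \|f_x(s)\|_{L^\infty}$, giving $f_{xx}(s,x_s) = 0$. I then compute $\partial_t f_x(s,x_s)$ by differentiating \eqref{E1} in $x$; the term proportional to $f_{xx}(s,x_s)$ vanishes, leaving an $\alpha$-integral whose integrand involves only the differences $f(x_s)-f(x_s-\alpha)$, $f_x(x_s)-f_x(x_s-\alpha)$, and $f_{xx}(x_s-\alpha)$. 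A single integration by parts in $\alpha$ trades the last of these for differences of $f_x$, eliminating any pointwise use of $f_{xx}$.

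The core pointwise estimate then splits the $\alpha$-integration at scale $\sigma$. On $|\alpha|>\sigma$ the integrand is non-singular, and every factor is bounded using only $\|f_x(s)\|_{L^\infty} \leq 2M$ from \eqref{con3}, yielding a contribution bounded by $C(\sigma,M)$. On $|\alpha| \leq \sigma$ I exploit \eqref{con1}: the condition $\beta_\sigma(f_x(s)) \leq 1-\varepsilon_0$ forces the product $\sup(\chi_{\sigma,x_s} f_x)\sup(-\chi_{\sigma,x_s} f_x) \leq 1-\varepsilon_0$ on the cutoff's support, keeping the denominator $\alpha^2 + (f(x_s)-f(x_s-\alpha))^2$ non-degenerate and pinning the sign of $f_x(x_s)-f_x(x_s-\alpha)$; together with the modulus estimate $|f_x(x_s)-f_x(x_s-\alpha)| \leq \min\{4M, \rho(|\alpha|/s)\} \leq \min\{4M, C_0|\alpha|/s\}$ from \eqref{con2}--\eqref{conrho}, the remaining $\alpha$-integral is uniformly integrable and bounded by $C = C(\sigma, C_0, M)$ independently of $s \in (0,T^*]$.

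Combining these estimates gives $|\partial_t f_x(s,x_s)| \leq C(\sigma, C_0, M)$ for all $s \in (0,T^*]$. By the standard Danchin-type identity $\tfrac{d}{ds}\|f_x(s)\|_{L^\infty} = \partial_t f_x(s,x_s)$ valid a.e., integration yields $3M/2 = \|f_x(T^*)\|_{L^\infty} \leq M + CT^*$, hence $T^* \geq M/(2C) =: T_1(\sigma,C_0,M)$, completing the proof. The principal difficulty I expect is in the third step: producing a bound for the $|\alpha| \leq \sigma$ integral that is genuinely uniform in $s \in (0,T^*]$, despite the naive bound $\|f_{xx}\|_{L^\infty} \lesssim C_0/s$ coming from \eqref{con2}. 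Avoiding this apparent $1/s$ divergence requires using both the extremal identity $f_{xx}(s,x_s) = 0$ and the non-degeneracy furnished by \eqref{con1} in tandem.
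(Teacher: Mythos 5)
Your overall strategy is the same as the paper's: evaluate $\partial_t f_x$ at a point where $f_x$ attains its supremum (so $f_{xx}=0$ there), rewrite the remaining integral so that it acts on the increments $\delta_\alpha f_x$, split at the scale $\sigma$, use $\beta_\sigma\le 1-\varepsilon_0$ on the small scales and crude $L^\infty$ bounds on the large scales, and integrate in time. The large-$|\alpha|$ part and the time-integration/continuation step are fine.

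However, there is a genuine gap at exactly the point you flag as the ``principal difficulty,'' and the claim you make there is not merely unproven but false as stated. After the integration by parts the small-scale contribution has the form $-\int_{|\alpha|\le\sigma}\delta_\alpha f_x(x_s)\,K_1(x_s,\alpha)\,d\alpha$ with $K_1(x_s,\alpha)\asymp \alpha^{-2}$ near the origin. The modulus bound $|\delta_\alpha f_x(x_s)|\le\min\{4M,\,C_0|\alpha|/s\}$ gives an integrand of size $C_0/(s|\alpha|)$, which is \emph{logarithmically divergent at $\alpha=0$} and in any case carries a factor $1/s$ that blows up as $s\to 0$; the extremal identity $f_{xx}(s,x_s)=0$ improves $\delta_\alpha f_x(x_s)$ to $O(\alpha^2)$ only at the cost of a $\|f\|_{C^3}$-dependent constant, which is not uniform. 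So no absolute bound $C(\sigma,C_0,M)$ for this integral is available from the hypotheses, and the final estimate cannot be closed as you describe.

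The correct mechanism — the one the paper uses — is a \emph{sign} argument, not a size argument: the condition $\beta_\sigma(f_x)\le 1-\varepsilon_0$ guarantees $f_x(x_s)\Delta_h f(x_s)+1\ge\varepsilon_0>0$ for $|h|\le\sigma$, hence the kernel $K_1(x_s,\cdot)\ge 0$ (ellipticity), while at the maximum $\delta_\alpha f_x(x_s)\ge 0$ for every $\alpha$. The whole small-scale term is therefore nonpositive and is simply \emph{discarded}, with no quantitative bound needed; only the tail kernel $K_2$, which is bounded by $2(1+L)\max\{|\alpha|,\sigma\}^{-2}$ and hence integrable, must be estimated, and there the $L^\infty$ bound on $f_x$ suffices. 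A consequence you should note is that the modulus hypothesis \eqref{con2}--\eqref{conrho} is not actually needed for this lemma and the resulting $T_1$ depends only on $\sigma$ and $M$. To repair your proof, replace ``the remaining $\alpha$-integral is uniformly integrable and bounded by $C(\sigma,C_0,M)$'' with the observation that this term has a favorable sign and can be dropped.
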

	\begin{lemma}\label{beta}
		Suppose \eqref{con1}-\eqref{conrho} hold. For any $M>0$, let $f$ be a solution to the Muskat equation with initial data $f_0$ satisfying $\|f_0'\|_{L^\infty}\leq M$. Then there exists $T_2=T_2(\sigma,C_0,M)>0$ such that 
		\begin{align*}
			\sup_{t\in[0,\min\{T,T_2\}]}\beta_\sigma(t)\leq 1-\frac{3\varepsilon_0}{4}.
		\end{align*}
	\end{lemma}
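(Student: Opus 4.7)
I would combine a local Lipschitz estimate for the functional $\beta_\sigma$ on $L^\infty(\mathbb{R})$ with a quantitative short-time continuity estimate for $f_x(t)$ in $L^\infty$.

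First, write $\beta_\sigma(g) = \sup_z A_z(g) B_z(g)$ with $A_z(g) := \sup_x (\chi_{\sigma,z} g)(x)$ and $B_z(g) := \sup_y (-\chi_{\sigma,z} g)(y)$. Since the cutoffs are bounded by $1$, each of $A_z$ and $B_z$ is $1$-Lipschitz in the $L^\infty$ norm and dominated in absolute value by $\|g\|_{L^\infty}$. Applying the elementary identity $ab - cd = a(b-d) + d(a-c)$ and then taking suprema in $z$ yields
$$|\beta_\sigma(g_1) - \beta_\sigma(g_2)| \le \bigl(\|g_1\|_{L^\infty} + \|g_2\|_{L^\infty}\bigr)\,\|g_1 - g_2\|_{L^\infty}.$$
Specializing to $g_1 = f_x(t)$ and $g_2 = f_0'$ and using \eqref{con3} together with $\|f_0'\|_{L^\infty} \le M$, this gives
$$\beta_\sigma(t) \le (1 - \varepsilon_0) + 3M\,\|f_x(t) - f_0'\|_{L^\infty}.$$
Hence it suffices to establish the quantitative bound $\|f_x(t) - f_0'\|_{L^\infty} \le \varepsilon_0/(12M)$ for $t \in [0, T_2]$.

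Second, I would produce this bound via a mollification split at scale $\varepsilon = \varepsilon(t)$:
$$f_x(t) - f_0' = \bigl[f_x(t) - f_x(t)*\phi_\varepsilon\bigr] + \bigl[(f(t) - f_0)*\phi_\varepsilon'\bigr] + \bigl[f_0'*\phi_\varepsilon - f_0'\bigr].$$
Term I is at most $\rho(\varepsilon/t) \le C_0 \varepsilon / t$ by \eqref{con2} and \eqref{conrho}. Term II is at most $\varepsilon^{-1}\|f(t) - f_0\|_{L^\infty} \le \varepsilon^{-1} t\,\|\partial_s f\|_{L^\infty([0,T]\times\mathbb{R})}$, where the $L^\infty$ bound on $\partial_s f$ is extracted from the Muskat equation using \eqref{con1}--\eqref{con3} together with the Lipschitz-in-space bound $\|f_{xx}(t)\|_{L^\infty} \le C_0/t$ that is a direct consequence of \eqref{con2}--\eqref{conrho}. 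Term III is $o_\varepsilon(1)$ by uniform continuity of $f_0' \in C_c^\infty$; any residual $f_0$-dependence can be eliminated by first proving the lemma for regularized initial data with constants depending only on $\sigma, C_0, M$ and then passing to the limit. Balancing $\varepsilon = \sqrt{t}$ leads to $\|f_x(t) - f_0'\|_{L^\infty} \le C(\sigma,C_0,M)\sqrt{t}$, from which $T_2 = T_2(\sigma, C_0, M)$ is read off to make the right-hand side at most $\varepsilon_0/(12M)$.

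\textbf{Main obstacle.} The principal technical difficulty is the uniform-in-time $L^\infty$ bound on $\partial_t f$. The Muskat integrand is only borderline integrable at $\alpha = 0$, controlled by the blowing-up quantity $\|f_{xx}(t)\|_{L^\infty} \le C_0/t$, and has a slowly decaying $1/|\alpha|$ tail at infinity that must be treated by odd cancellation inside the principal value together with \eqref{con3}. Closing the short-distance portion of this estimate requires invoking the ellipticity granted by $\beta_\sigma < 1$ from \eqref{con1}, which is precisely the role of the main-part kernel $K_1$ discussed in the introduction.
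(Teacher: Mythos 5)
Your first step (the Lipschitz bound $|\beta_\sigma(g_1)-\beta_\sigma(g_2)|\le(\|g_1\|_{L^\infty}+\|g_2\|_{L^\infty})\|g_1-g_2\|_{L^\infty}$) is correct, but the reduction it effects --- controlling $\beta_\sigma(t)-\beta_\sigma(0)$ by $\|f_x(t)-f_0'\|_{L^\infty}$ --- cannot be closed under \eqref{con1}--\eqref{conrho}, and your second step contains a quantitative error. The only modulus of continuity available for $f_x(t)$ is \eqref{con2}, so Term I is bounded by $\rho(\varepsilon/t)\le C_0\varepsilon/t$, which is small only when $\varepsilon\ll t$; with your choice $\varepsilon=\sqrt{t}$ it equals $C_0t^{-1/2}$ and blows up. Meanwhile $\|\partial_s f\|_{L^\infty}$ is at best of order $1+L\log(1/s)$ (the short-range piece of the integrand is only controlled by $\min\{L,C_0|\alpha|/s\}$), so Term II $\lesssim \varepsilon^{-1}t(1+\log(1/t))$ is small only when $\varepsilon\gg t\log(1/t)$. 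The two windows are disjoint, and the optimal balance of $a\varepsilon/t+bt/\varepsilon$ over $\varepsilon$ is the $t$-independent constant $2\sqrt{ab}$, not $C\sqrt{t}$; hence Term I $+$ Term II cannot be forced below $\varepsilon_0/(12M)$ by shrinking $t$. Term III is equally fatal: bounding $\|f_0'\ast\phi_\varepsilon-f_0'\|_{L^\infty}$ requires a modulus of continuity of $f_0'$ quantified by $\sigma, C_0, M$ alone, and no such modulus is assumed --- the whole point of $\beta_\sigma$ is to admit data whose derivative oscillates by $O(1)$ at small scales, and for the approximants $f_{0\varepsilon}'$ used in the bootstrap this modulus degenerates as $\varepsilon\to0$, so "prove it for regularized data and pass to the limit" does not yield a $T_2$ depending only on $(\sigma,C_0,M)$. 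In the limiting class $f_x(t)$ is Lipschitz with constant $C_0/t$ while $f_0'$ may be essentially discontinuous, so $\|f_x(t)-f_0'\|_{L^\infty}$ need not become small at all.

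The paper avoids this entirely by a localized maximum-principle argument: it multiplies the equation for $f_x$ by $\chi_{\sigma,z}$, evaluates at the point $x_t$ where $\chi_{\sigma,z}f_x$ attains its supremum (there $(\chi_\sigma f_x)_x=0$ and $\delta_\alpha(\chi_\sigma f_x)(x_t)\ge0$), drops the nonpositive contribution of the kernel $K_1$, and bounds the remaining commutator and tail terms $I_1,I_2,I_3$ by $C\sigma^{-1}(1+\log(1/t))+C\sigma^{-1/2}t^{-1/2}$. Integrating in time gives the one-sided bounds $\sup_x(\chi_{\sigma,z}f_x)(t)\le\sup_x(\chi_{\sigma,z}f_0')+Ct^{1/2}$ and similarly for $-f_x$, whence $\beta_\sigma(t)\le\beta_\sigma(0)+Ct^{1/2}$. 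This route needs only one-sided control at extremal points and never compares $f_x(t)$ to $f_0'$ in $L^\infty$; to repair your scheme you would have to replace $\|f_x(t)-f_0'\|_{L^\infty}$ by exactly such localized one-sided quantities.
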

	\begin{lemma}\label{lemmain}
		There exists a function $\rho$ such that if \eqref{con1}-\eqref{conrho} hold, and 
		\begin{align}\label{coeq}
			f_x(T,x)-f_x(T,y)=\rho\left(\frac{x-y}{T}\right)	
		\end{align}
		for some $x\neq y\in\mathbb{R}$ and $T<\sigma$, then there exists $\tilde C>0$ independent of $T$ such that
		\begin{align}\label{mainest}
			\left.\frac{d}{dt}(f_x(t,x)-f_x(t,y))\right|_{t=T}<\left.\frac{d}{dt}\left(\rho\left(\frac{x-y}{t}\right)\right)\right|_{t=T}+\tilde C \rho\left(\frac{x-y}{T}\right).
		\end{align}
	\end{lemma}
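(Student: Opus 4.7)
The plan is to differentiate the Muskat equation \eqref{E1} in $x$, evaluate $\partial_t(f_x(T,x) - f_x(T,y))$ at the touching points provided by \eqref{coeq}, and rewrite the result as a nonlocal integral operator acting on $f_x$. This operator admits the decomposition $K = K_1 + (K - K_1)$ from Section \ref{secmain}, where $K_1$ is the elliptic main part. I would then estimate the two contributions separately and show that $K_1$ produces the negative leading order comparable to $\frac{d}{dt}\rho(\xi/t)|_{t=T}$, while $K - K_1$ is absorbed into $\tilde C\,\rho(\xi/T)$. Without loss of generality I set $\xi := x - y > 0$.

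The first concrete step is to fix $\rho$. I would take $\rho$ continuous, strictly concave on $[0,\infty)$, with $\rho(0) = 0$, $\rho'(0^+)$ strictly exceeding $2\|f_0'\|_{L^\infty}$ (so that \eqref{con2} contains nontrivial information at all scales), and with a sharply concave profile of the form $\rho(\xi) \sim C_1 \xi - C_2\, \xi^{3/2}$ on a short interval $[0,\delta]$, smoothly continued to a bounded concave function on $[\delta,\infty)$. The square-root singularity of $\rho'$ at the origin provides the extra dissipative gain required to dominate the remainder term.

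For the $K_1$ contribution, I would exploit that at the touching configuration \eqref{coeq} holds with equality while \eqref{con2} remains a pointwise upper bound on $f_x(T,\cdot) - f_x(T,\cdot)$. The $K_1$ integral can be written as a sum of two second-order differences of $f_x$ around $x$ and around $y$ against a positive integrable kernel; inserting \eqref{con2} at every point where a nonpositive factor appears converts these into second-order differences of $\rho(\cdot/T)$, and concavity of $\rho$ makes them strictly negative and comparable to $-\frac{\xi}{T^2}\rho'(\xi/T) = \frac{d}{dt}\rho(\xi/t)|_{t=T}$. Ellipticity of $K_1$, which is precisely what \eqref{con1} with $\beta_\sigma < 1$ guarantees, is essential here to ensure the kernel's positivity and the correct sign of the gain.

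For the remainder $K - K_1$, I would combine \eqref{con2}, \eqref{con3} and \eqref{conrho} with the restriction $T < \sigma$ to bound the corresponding integral pointwise by $\tilde C\, \rho(\xi/T)$ for a constant $\tilde C$ depending only on $\|f_0'\|_{L^\infty}$, $C_0$, $\sigma$ and $\varepsilon_0$. Because $K - K_1$ has no singular direction where the $\beta_\sigma$ obstruction arises, its kernel is uniformly integrable at the origin, and the estimate reduces to controlling $f_x$-differences by $\rho$. Combining the two pieces yields \eqref{mainest} for this choice of $\tilde C$. The main obstacle will be the small-ratio regime $\xi/T \to 0$: both the elliptic gain and the remainder loss scale like $\rho(\xi/T)/T$ there, so the strict inequality in \eqref{mainest} demands a delicate numerical balance between the prefactor in the elliptic estimate and $\tilde C$, which in turn dictates the $\xi^{3/2}$ profile of $\rho$ near the origin chosen in Step~2.
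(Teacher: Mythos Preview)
Your skeleton is the same as the paper's, and your choice $\rho(\xi)\sim C_1\xi-C_2\xi^{3/2}$ near the origin is exactly right. But two structural points are missing, and without them the argument does not close.

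First, differentiating \eqref{E1} does \emph{not} produce a pure nonlocal operator $\int\delta_\alpha f_x\,K\,d\alpha$. As in \eqref{eqde} there is also a drift term $-f_{xx}(x)\int\langle\Delta_\alpha f(x)\rangle^{-2}\,d\alpha/\alpha$. At the touching configuration one has $f_{xx}(x)=f_{xx}(y)=\rho_T'(z)>0$, so the drift survives in the difference and contributes $Q_1=\rho_T'(z)\int(\langle\Delta_\alpha f(y)\rangle^{-2}-\langle\Delta_\alpha f(x)\rangle^{-2})\,d\alpha/\alpha$, which is of order $\rho_T'(z)\big(\int_0^z\rho_T/h\,dh+z\int_z^\infty\rho_T/h^2\,dh\big)$. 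This is not bounded by $\tilde C\rho_T(z)$ and must be beaten by the dissipation.

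Second, you treat the $K_1$ contribution as if it were $\int(\delta_\alpha f_x(y)-\delta_\alpha f_x(x))K_1\,d\alpha$ against a single positive kernel. But $K_1(x,\alpha)\neq K_1(y,\alpha)$: the actual integrand is $\delta_\alpha f_x(y)K_1(y,\alpha)-\delta_\alpha f_x(x)K_1(x,\alpha)$, and the kernel mismatch produces cross terms bounded by $|K_1(x,\alpha)-K_1(y,\alpha)|\lesssim(1+L)\rho_T(z)\alpha^{-2}$. These yield additional positive errors of size $\rho_T'(z)\int_0^z\rho_T/h\,dh$ and $\rho_T(z)\int_z^\infty(\rho_T(\alpha+z)-\rho_T(z))\alpha^{-2}\,d\alpha$ (see the decomposition $Q_{2,1}+Q_{2,2}+Q_{2,3}$ in Lemma~\ref{lemmodulus}). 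Only after isolating these does one get the clean dissipative piece $\frac{\varepsilon_0}{(1+L^2)^2}\int(\delta_\alpha f_x(y)-\delta_\alpha f_x(x))\alpha^{-2}\,d\alpha$.

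These two sources of positive error are the real competitors to the dissipation, not $K_2$. For small $\tilde z=C_0z/T$ they carry a $\log(\delta/\tilde z)$ factor, which your $\omega''(\tilde z)=-\tilde z^{-1/2}$ handles. But for large $\tilde z$ they scale like $\omega'(\tilde z)\omega(\tilde z)\log(\tilde z/\delta)$ and $\omega(\tilde z)\int_{\tilde z}^\infty\omega'/\eta\,d\eta$, while the dissipation is only $\sim\omega(\tilde z)/\tilde z$. A generic ``bounded concave continuation'' fails here; one needs the specific logarithmic decay $\omega'(\alpha)=\gamma/(\alpha(\log(\alpha/\delta)+10))$ with $\gamma$ small, so that $\omega'(\tilde z)\log(\tilde z/\delta)\lesssim\gamma/\tilde z$ and $\int_{\tilde z}^\infty\omega'/\eta\,d\eta\lesssim\gamma/\tilde z$. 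So the large-$\tilde z$ regime is at least as delicate as the small one, and the shape of $\rho$ there is forced, not arbitrary. Your identification of $K-K_1=K_2$ as the term absorbed by $\tilde C\rho_T(z)$ is correct---that is exactly how the paper chooses $\tilde C=10(1+L)\sigma^{-1}$---but $K_2$ is the easy part.
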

	
	\begin{remark}\label{rem}
		The above lemma is also true if we replace the function $\rho$ by $\lambda \rho$ for any $\lambda \in[1,2]$. More precisely, if 
		\begin{align*}
			f_x(T,x)-f_x(T,y)=\lambda\rho\left(\frac{x-y}{T}\right),
		\end{align*}
		for some $x\neq y\in\mathbb{R}$ and $T<\sigma$,	then
		\begin{align*}
			\left.\frac{d}{dt}(f_x(t,x)-f_x(t,y))\right|_{t=T}<\lambda\left.\frac{d}{dt}\left(\rho\left(\frac{x-y}{t}\right)\right)\right|_{t=T}+\tilde C \lambda\rho\left(\frac{x-y}{T}\right).
		\end{align*}
	\end{remark}
	Now we can prove Theorem \ref{thm} by the standard bootstrap argument.
	\begin{proof}[Proof of Theorem \ref{thm}] 
		Set $T_0=\frac{1}{2}\min\{T_1,T_2,\sigma,\tilde C^{-1}\}$, where $T_1, T_2$ are defined in Lemma \ref{lipest} and Lemma \ref{beta}.
		We approximate $f_0$ by a sequence of smooth function whose derivative has compact support. More precisely, denote 
		\begin{align*}
			f_{0\varepsilon}=f_0(0)+\int_0^x\left((f_0'\chi_{\varepsilon^{-1}})\ast \phi_\varepsilon\right)(s)ds,
		\end{align*}
		where $\phi_\varepsilon$ is the standard mollifier.
		By the definition of $\beta_\sigma$ in \eqref{defbet}, one can check that $\beta_\sigma (f_0')\leq 1-\varepsilon_0$ implies $\beta_{\sigma/2} (f_{0\varepsilon}')\leq 1-\varepsilon_0$ for $\varepsilon\leq 10^{-10}\sigma$. Denote $f^\varepsilon$ the classical solution associate to initial data $f_{0\varepsilon}$. Let 
		\begin{align*}
			T=\sup&\left\{t\in[0,T_0]:\sup_{\tau\in[0,t]} \beta_{\sigma/2}(f_x^\varepsilon(t))\leq1-\frac{\varepsilon_0}{2},\|f^\varepsilon_x(t)\|_{L^\infty} \leq 2\|f_{0\varepsilon}'\|_{L^\infty},\right.\\
			&\quad\quad\left.f^\varepsilon_x(t,x)-f^\varepsilon_x(t,y)\leq \tilde \rho(|x-y|/t)e^{\tilde Ct},  \forall x\neq y\in\mathbb{R}\right\}.
		\end{align*}
		for some modulus of continuity $\tilde \rho$ satisfying $\tilde \rho(\alpha)\leq C_0|\alpha|$.	By classical regularity theory, we have $T>0$. We want to prove that $T=T_0$. If $T<T_0$, by Lemma \ref{lipest} and Lemma \ref{beta}, we have 
		\begin{align*}
			&\sup_{t\in[0,T]}\|f^\varepsilon_x(t)\|_{L^\infty} \leq \frac{3}{2}\|f_{0\varepsilon}'\|_{L^\infty},\\
			&\sup_{t\in[0,T]}\beta_{\sigma/2}(f_x^\varepsilon(t))\leq1-\frac{3\varepsilon_0}{4}.
		\end{align*}
		Moreover, we observe that by continuity, there exists $t_0\in[0,T]$ such that $\sup_{\tau\in[0,t_0]}\|f^\varepsilon(\tau)\|_{C^3}\leq 10 (1+\varepsilon^{-2})\|f_0\|_{W^{1,\infty}}$. By the definition of $T$, one has
		$$
		\sup_{t\in[t_0,T]}\|f^\varepsilon(t)\|_{\dot W^{2,\infty}}\leq C_0t_0^{-1}e^{\tilde C t_0}.
		$$
		Hence there exists $R=R(\varepsilon)>0$ such that 
		$$
		\sup_{t\in[0,T]}\|f^\varepsilon(t)\|_{C^3}\leq R.
		$$
		By Lemma \ref{lem2de}, we have $f^\varepsilon_{x}(x), f^\varepsilon_{xx}(x)\to 0$ as $x\to \infty$.
		Applying Lemma \ref{strict} with $g=f_x^\varepsilon$, if $f_x^\varepsilon$ is going to lose its modulus after time $T$, then there exist  $x\neq y\in\mathbb{R}$ such that 
		\begin{align}\label{coeqe}
			f_x^\varepsilon(T,x)-f_x^\varepsilon(T,y)=\tilde \rho\left(\frac{x-y}{T}\right)e^{\tilde CT}.	
		\end{align}
		Applying Remark \ref{rem} with $\lambda=e^{\tilde CT}$, there exists a function $\tilde \rho$ (independent of $\varepsilon$) such that if \eqref{coeqe} holds, then
		\begin{align*}
			\left.\frac{d}{dt}(f^\varepsilon_x(t,x)-f^\varepsilon_x(t,y))\right|_{t=T}<e^{\tilde CT}&\left.\frac{d}{dt}\left(\tilde\rho\left(\frac{x-y}{t}\right)\right)\right|_{t=T}+\tilde Ce^{\tilde CT}\tilde\rho\left(\frac{x-y}{T}\right)\\
			&=\left.\frac{d}{dt}\left(\tilde\rho\left(\frac{x-y}{t}\right)e^{\tilde Ct}\right)\right|_{t=T},
		\end{align*}
		which contradicts the fact that $f_x^\varepsilon$ has modulus $\tilde\rho(\cdot/t)e^{\tilde Ct}$ for any $t<T$. Hence $f^\varepsilon_x$ continues to preserve the modulus for $t\in[T,T+\mu]$ for some $\mu$ small enough. By standard bootstrap argument, we have $T=T_0$. We obtain Theorem \ref{thm} with $\rho(\alpha)=e^{\tilde CT_0}\tilde \rho(\alpha)$ by taking the limit $\varepsilon\to 0$.
	\end{proof}
 In the proof of Theorem \ref{thm}, we utilize the following lemma, which is an analogue of \cite[Lemma 2.1]{Cameron2019}.
	\begin{lemma}\label{strict}
		Let $g\in C((0,T_0],C^2(\mathbb{R}^d))$ be any function satisfying $g(x), \nabla g(x)\to 0$ uniformly as $x\to \infty$.  Suppose for $0<T<T_0$, 
		$$
		g(T,x)-g(T,y)<\rho(|x-y|/T)e^{CT}, \ \ \forall x\neq y\in\mathbb{R}^d,
		$$
		for some Lipschitz modulus of continuity $\rho$ with $\rho''(0)=-\infty$. Then
		$$
		g(T+\epsilon,x)-g(T+\epsilon,y)<\rho(|x-y|/(T+\epsilon))e^{C(T+\epsilon)}, \ \ \forall x\neq y\in\mathbb{R}^d,
		$$
		for $\epsilon$ sufficiently small and $T+\epsilon<T_0$. 
	\end{lemma}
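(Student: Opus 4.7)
The plan is to argue by contradiction, following the breakthrough scheme of \cite{Kiselev} in the form adapted in \cite[Lemma 2.1]{Cameron2019}. Assume the conclusion fails: then there exist sequences $\epsilon_n \to 0^+$ and $(x_n, y_n) \in \mathbb{R}^d \times \mathbb{R}^d$ with $x_n \neq y_n$ such that
\[
g(T + \epsilon_n, x_n) - g(T + \epsilon_n, y_n) \geq \rho\bigl(|x_n - y_n|/(T + \epsilon_n)\bigr)\, e^{C(T + \epsilon_n)}.
\]
The goal is to confine $(x_n, y_n)$ enough to pass to a subsequential limit and contradict the strict hypothesis at time $T$. Concretely, I will establish uniform bounds on $|x_n - y_n|$ from above and below by positive constants, then exploit compactness together with the $C((0,T_0];C^2(\mathbb{R}^d))$ continuity of $g$.

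For the upper bound, I would use the uniform decay of $g$ (and $\nabla g$) at infinity. If $|x_n - y_n| \to \infty$, at least one of $x_n, y_n$ must escape to infinity; the corresponding value of $g(T+\epsilon_n, \cdot)$ is then forced to be small, while $\rho\bigl(|x_n - y_n|/(T+\epsilon_n)\bigr)$ stays bounded below by a positive constant since $\rho$ is nondecreasing and $\rho(s) > 0$ for $s > 0$. This contradicts the assumed reverse inequality.

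The central step, and the main obstacle, is ruling out $|x_n - y_n| \to 0$, where the hypothesis $\rho''(0) = -\infty$ plays its decisive role. Since $g \in C^2$, a Taylor expansion bounds the LHS by $\|\nabla g(T+\epsilon_n)\|_{L^\infty}|x_n-y_n| + \tfrac12\|D^2 g(T+\epsilon_n)\|_{L^\infty}|x_n-y_n|^2$. Letting $x \to y$ along a fixed unit direction in the strict hypothesis at time $T$ forces the first-order consistency $\|\nabla g(T)\|_{L^\infty} \leq \rho'(0)\,e^{CT}/T$, so the linear terms match at best with equality and cannot supply a contradiction by themselves. The condition $\rho''(0) = -\infty$ supplies the extra curvature: for any $N > 0$ there exists $s_0 > 0$ with $\rho(s) \leq \rho'(0) s - \tfrac{N}{2} s^2$ whenever $s \leq s_0$. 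Choosing $N$ to dominate $\|D^2 g(T)\|_{L^\infty} T^2$ (finite because $g(T) \in C^2$), the strongly negative quadratic term on the RHS strictly exceeds the quadratic Taylor remainder of the LHS, forcing LHS $<$ RHS for sufficiently small $|x_n - y_n|$ — a contradiction.

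Once $|x_n - y_n|$ is pinned to an annulus $[\delta, R]$, I extract convergent subsequences of $x_n$ and $y_n$. In the generic case $y_n \to y^*$ and $x_n \to x^*$ with $x^* \neq y^*$, the $C((0,T_0];C^2(\mathbb{R}^d))$ regularity of $g$ allows passing to the limit to obtain $g(T, x^*) - g(T, y^*) \geq \rho(|x^*-y^*|/T)\,e^{CT}$, contradicting the strict hypothesis at time $T$. In the remaining case, where one or both of $x_n, y_n$ drift to infinity while $|x_n - y_n|$ stays bounded, the decay of $g$ at infinity forces the LHS to $0$ while the RHS stays bounded below by $\rho(\delta/T_0)\,e^{CT} > 0$, again a contradiction. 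Assembling these cases closes the argument.
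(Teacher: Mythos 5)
Your overall architecture (argue by contradiction along sequences $\epsilon_n\to 0$, $(x_n,y_n)$, and split into the three regimes $|x_n-y_n|\to 0$, $|x_n-y_n|$ confined to a compact annulus, and points escaping to infinity) is sound and matches the paper's decomposition; the large-separation and compact cases are handled correctly. The genuine gap is in the central case $|x_n-y_n|\to 0$. The inequality you extract from $\rho''(0)=-\infty$, namely $\rho(s)\le\rho'(0)s-\tfrac{N}{2}s^2$ for $s\le s_0(N)$, is an \emph{upper} bound on $\rho$, hence on the right-hand side of the inequality $g(T+\epsilon_n,x_n)-g(T+\epsilon_n,y_n)<\rho(s_n)e^{C(T+\epsilon_n)}$ that you are trying to establish. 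Making the right-hand side smaller cannot ``force LHS $<$ RHS''; combined with your Taylor upper bound on the left-hand side, whose quadratic remainder enters with the unfavorable sign $+\tfrac12\|D^2g\|\,|x_n-y_n|^2$, the second-order comparison in fact goes in the opposite direction from the one you claim. With only the non-strict first-order bound $\|\nabla g(T)\|_{L^\infty}\le\rho'(0)e^{CT}/T$, which you correctly observe gives equality of linear terms at best, the small-separation case cannot be closed.

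The correct use of $\rho''(0)=-\infty$ (as in the paper and in \cite[Lemma 2.1]{Cameron2019}) is at time $T$ and in the contrapositive, to upgrade the gradient bound to a \emph{strict} one. If $|\nabla g(T,x_0)|=\rho'(0)e^{CT}/T$ at some $x_0$, test the strict hypothesis at $x=x_0+he$, $y=x_0$ with $e$ the direction of $\nabla g(T,x_0)$: this gives $\rho'(0)e^{CT}h/T+O(h^2)<\rho(h/T)e^{CT}\le\bigl(\rho'(0)h/T-\tfrac{N}{2}(h/T)^2\bigr)e^{CT}$, which fails for $N$ large and $h$ small. Hence $|\nabla g(T,x)|<\rho'(0)e^{CT}/T$ pointwise, and the decay of $\nabla g$ at infinity makes this strict bound uniform, $\|\nabla g(T)\|_{L^\infty}<\rho'(0)e^{CT}/T$. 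Continuity in the $C^2$ topology transfers the strict bound to time $T+\epsilon$, and the small-separation case then follows from the \emph{linear} comparison alone, using $\rho(s)\ge(1-\eta)\rho'(0)s$ for $s$ small (valid since $\rho(s)/s\to\rho'(0)$), with $\eta$ chosen according to the margin in the gradient bound; no second-order expansion of $g$ is needed at that stage.
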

	\begin{proof}
		By uniform continuity, we have that for any compact subset $K\subset\mathbb{R}^{2d}\backslash\{(x,x):x\in\mathbb{R}^d\}$, there exists $\epsilon>0$ small enough such that the conclusion holds for any $x,y\in K$. Hence we only need to consider $|x-y|\ll1$ or $|x|,|y|\gg 1$.\\
		We first consider $|x-y|\ll 1$. By $\rho''(0)=-\infty$ we get 
		$$
		|\nabla g(T,x)|<\frac{\rho'(0)}{T}e^{CT}.
		$$
		Hence  $\|\nabla g(T+\epsilon,\cdot)\|_{L^\infty}<\frac{\rho'(0)}{T+\epsilon}e^{C(T+\epsilon)}$ for $\epsilon$ small enough. Then we obtain the conclusion for $|x-y|< \delta$ for some $\delta$ small enough. \\
		Let $R_1, R_2>0$ be such that 
		$$
		\rho\left(R_{1} /(T+\epsilon)\right)e^{C(T+\epsilon)}>\sup_{\alpha,x}|\delta_\alpha g(T+\epsilon,x)|,
		$$ 
		and $|x|\geq R_2$ implies 
		$$\left|g(T+\epsilon, x)\right|<\frac{\rho(\delta /(T+\epsilon))e^{C(T+\epsilon)}}{2},$$
		for $\epsilon>0$ sufficiently small. Take $R=R_1+R_2$, then $|x|\geq R$ implies 
		$$
		g(T+\epsilon,x)-g(T+\epsilon,y)<\rho(|x-y|/(T+\epsilon))e^{C(T+\epsilon)}, \ \ \forall  y\neq x.
		$$
		Then we complete the proof by taking $K=\{(x,y)\in\mathbb{R}^{2d}:|x-y|\geq \delta, |x|,|y|\leq R\}$.
	\end{proof}
 \begin{proof}[Proof of Lemma \ref{zz} ]
		We prove that there exists $\sigma>0$ such that $\beta_\sigma (f_0')\leq 1-\varepsilon_0/2$.  We first claim that there exists $\nu\in(0,\mu_1)$ such that \begin{align}\label{dada}
			\sup_{|x-y|\leq \nu }(-f_0'(x)f_0'(y))\leq 1-\frac{\varepsilon_0}{4}.
		\end{align}
		It suffices to consider the case where $x\in I_k,y\in I_{k+1}, |x-y|\leq \nu$ for some $k\in\mathbb{Z}$.
		Then one has 
		\begin{align*}
			|f_0'(x)-f_0'(y)|&\leq 	|f_0'-f_0'\ast\phi_\varepsilon|(x)+|f_0'-f_0'\ast\phi_\varepsilon|(y)+|f_0'\ast\phi_\varepsilon(x)-f_0'\ast\phi_\varepsilon(y)|\\
			&\leq 2-2\varepsilon_0+|x-y|\varepsilon^{-1}\|f_0'\|_{L^\infty}=2-\varepsilon_0,
		\end{align*}
		We can take $\varepsilon$ small enough such that 
		$$
		|f_0'-f_0'\ast\phi_\varepsilon|(x),|f_0'-f_0'\ast\phi_\varepsilon|(y)\leq 1-\frac{\varepsilon_0}{2}.
		$$
		Then we take $\nu$ small enough such that 
		$$
		|f_0'-f_0'\ast\phi_\varepsilon|(x)+|f_0'-f_0'\ast\phi_\varepsilon|(y)\leq 2 |x-y|\varepsilon^{-1}\|f_0'\|_{L^\infty}\leq 4 \nu\varepsilon^{-1}\|f_0'\|_{L^\infty}\leq \frac{\varepsilon_0}{2}.
		$$
		Hence we obtain 
		\begin{align*}
			|f_0'(x)-f_0'(y)|&\leq 2-\frac{\varepsilon_0}{2},
		\end{align*} which yields that 
		$$
		-f_0'(x)f_0'(y)\leq \left(\frac{	|f_0'(x)-f_0'(y)|}{2}\right)^2\leq 1-\frac{\varepsilon_0}{4}.
		$$
		This completes the proof of \eqref{dada}.
		Then we obtain $\beta_\sigma (f_0')\leq 1-\frac{\varepsilon_0}{4}$ with $\sigma =\frac{\nu}{2}$.
	\end{proof}
	\subsection{Establish the main estimates}\label{secmain}
	In this section, we prove Lemma \ref{lipest}-Lemma \ref{lemmain} under the assumptions \eqref{con1}-\eqref{conrho}.
	\begin{proof}[Proof of Lemma \ref{lipest}] 
		We have the equation
		\begin{equation}\label{eqde}
			\begin{split}
				(f_x)_t(x)=-f_{xx}(x)&\int_{\mathbb{R}}\frac{1}{\langle\Delta_\alpha f(x)\rangle^2}\frac{d\alpha}{\alpha}\\
				&+2\int_{\mathbb{R}}\frac{E_\alpha f(x)(f_x(x)\Delta_\alpha f(x)+1)}{\langle\Delta_\alpha f(x)\rangle^4}\frac{d\alpha}{\alpha^2},	
			\end{split}	
		\end{equation}
		where we denote \begin{equation}\label{eqdefnote}
			E_\alpha f(x)=\Delta_\alpha f(x)-f_x(x),\ \ \ \Delta_\alpha f(x)=\frac{f(x)-f(x-\alpha)}{\alpha},\ \ \ \langle A \rangle=\sqrt{A^2+1}.
		\end{equation}
		It is easy to check that 
		$$
		E_\alpha f(x)=-\fint_0^\alpha \delta_hf_x(x)dh=\begin{cases}
			-\frac{1}{\alpha}\int_{0}^\alpha \delta_hf_x(x)dh,&~\text{if}~ \alpha>0,\\
			\frac{1}{\alpha}\int_\alpha^{0} \delta_hf_x(x)dh,&~\text{if} ~\alpha<0.
		\end{cases}
		$$
		Denote $$k(x,\alpha)=\frac{2(f_x(x)\Delta_\alpha f(x)+1)}{\langle\Delta_\alpha f(x)\rangle^4}.$$
		Then
		\begin{equation}\label{Ealp}	\begin{aligned}
				&\int_{\mathbb{R}}E_\alpha f(x)k(x,\alpha)\frac{d\alpha}{\alpha^2}\\
				&=-\int_0^{+\infty}\int_0^\alpha\delta_h f_x(x) k(x,\alpha)\frac{dhd\alpha}{\alpha^3}+\int_{-\infty}^0\int_\alpha^0\delta_h f_x(x) k(x,\alpha)\frac{dhd\alpha}{\alpha^3}\\
				&=-\int_0^{+\infty}\delta_\alpha f_x(x)\int_\alpha^{+\infty}k(x,h)\frac{dhd\alpha}{\alpha^3}+\int_{-\infty}^0\delta_\alpha f_x(x) \int_{-\infty}^\alpha k(x,h)\frac{dhd\alpha}{\alpha^3}\\
				&=-\int_{\mathbb{R}}\delta_\alpha f_x(x)K(x,\alpha){d\alpha},
			\end{aligned}
		\end{equation}
		where we denote \begin{align}\label{defK}
			K(x,\alpha)=K_1(x,\alpha)+K_2(x,\alpha),
		\end{align} with
		\begin{align*}
			&K_1(x,\alpha)=\int_\alpha^\sigma k(x,h)\frac{dh}{h^3}  \mathbf{1}_{0<\alpha\leq \sigma}-\int_{-\sigma}^\alpha k(x,h)\frac{dh}{h^3}   \mathbf{1}_{-\sigma<\alpha< 0},\\
			&K_2(x,\alpha)=	\int_{\max\{\alpha,\sigma\}}^{+\infty}k(x,h)\frac{dh}{h^3} \mathbf{1}_{\alpha>0}-\int_{-\infty}^{\min\{\alpha,-\sigma\}} k(x,h)\frac{dh}{h^3} \mathbf{1}_{\alpha<0}.
		\end{align*}
		By the definition of $k(x,h)$, it is easy to check that for $|h|\leq \sigma$
		\begin{align*}
			\frac{2(1-\beta_\sigma(f_x))}{(1+\|f_x\|_{L^\infty}^2)^2}	\leq k(x,h)\leq 2(1+\|f_x\|_{L^\infty}).
		\end{align*}
		For simplicity, we let $L=2\|f_0'\|_{L^\infty}$.	Then \eqref{con1} and \eqref{con3} lead to
		\begin{align*}
			0\leq	\frac{\varepsilon_0\mathbf{1}_{|\alpha|\leq \sigma}}{2(1+L^2)^2}\left(\frac{1}{\alpha^2}-\frac{1}{\sigma^2}\right)\leq K_1(x,\alpha)\leq \frac{1+L}{\alpha^2},
		\end{align*}
		and that 
		\begin{align*}
			|K_2(x,\alpha)|\leq \frac{2(1+L)}{\max\{|\alpha|,\sigma\}^2}.
		\end{align*}
		Hence \eqref{eqde} yields
		\begin{align*}
			(f_x)_t(x)=-f_{xx}(x)&\int_{\mathbb{R}}\frac{1}{\langle\Delta_\alpha f(x)\rangle^2}\frac{d\alpha}{\alpha}-\int_{\mathbb{R}}\delta_\alpha f_x(x)K(x,\alpha){d\alpha}.
		\end{align*}
		Let $x_t$ be such that $f_x(t,x_t)=\sup_xf_x(t,x)$.
		Then $f_{xx}(x_t)=0$ and $\delta_\alpha f_x(x_t)\geq 0$. Hence by the ellipticity of $K_1$ we obtain
		\begin{align*}
			\frac{d}{dt}f_x(x_t)&\leq -\int_{\mathbb{R}}\delta_\alpha f_x(x_t)K_2(x_t,\alpha){d\alpha}.
		\end{align*}
		It is easy to check that 
		\begin{align}\label{K2Linf}
			\left|\int_{\mathbb{R}}\delta_\alpha f_x(x_t)K_2(x_t,\alpha){d\alpha}\right|\lesssim (1+L)^2\int_{\mathbb{R}} \frac{1}{\max\{|\alpha|,\sigma\}^2}d\alpha\lesssim (1+L)^2\sigma^{-1}.
		\end{align}	
		The parallel argument holds for $\sup_x(-f_x(t,x))$. Then we conclude that 
		\begin{align*}
			\frac{d}{dt}\|f_x(t)\|_{L^\infty}\leq C_1  (1+L)^2\sigma^{-1}.
		\end{align*} 
		Integrate in time we have
		\begin{align*}
			\|f_x(t)\|_{L^\infty}&\leq  \|f_0'\|_{L^\infty}+C_1(1+L)^2\sigma^{-1}t.
		\end{align*}
		Then we get 
		\begin{align*}
			\sup_{t\in[0,T_1]}\|f_x(t)\|_{L^\infty}<\frac{3}{2}\|f_0'\|_{L^\infty},
		\end{align*}
		for any $	T_1<\frac{\sigma \|f_0'\|_{L^\infty}}{10C_1 (1+L)^2}$. This completes the proof.
	\end{proof}

	\begin{proof}[Proof of Lemma \ref{beta}]
		For any $z\in\mathbb{R}$, we write $\chi_\sigma =\chi_{\sigma,z}$ for simplicity. Multiply $\chi_\sigma(x)$ on both sides of \eqref{eqde}, we obtain from \eqref{Ealp} that 
		\begin{align*}
			(\chi_\sigma f_x)_t(x)&=-\chi_\sigma(x) f_{xx}(x)\int_{\mathbb{R}}\frac{1}{\langle\Delta_\alpha f(x)\rangle^2}\frac{d\alpha}{\alpha}-\chi_\sigma(x)\int_{\mathbb{R}}\delta_\alpha f_x(x)K(x,\alpha)d\alpha\\
			&=-((\chi_\sigma f_{x})_x(x)-\chi_\sigma'(x)f_x(x))\int_{\mathbb{R}}\frac{1}{\langle\Delta_\alpha f(x)\rangle^2}\frac{d\alpha}{\alpha}\\
			&\ \ \ \quad -\int_{\mathbb{R}}\delta_\alpha (\chi_\sigma f_x)(x)K(x,\alpha)d\alpha-\int_{\mathbb{R}}\delta_\alpha \chi_\sigma(x)f_x(x-\alpha)K(x,\alpha)d\alpha.
		\end{align*}
		Let $x_t$ such that $(\chi_\sigma f_x)(t,x_t)=\sup_x(\chi_\sigma f_x)(t,x)$.
		Then $(\chi_\sigma f_x )_x(t,x_t)=0$ and $\delta_\beta(\chi_\sigma f_x)(t,x_t)\geq 0$. Combining this with the fact $K(x,\alpha)=K_1(x,\alpha)+K_2(x,\alpha)\geq K_2(x,\alpha)$ to get
		\begin{align*}
			\frac{d}{dt}\left((\chi_\sigma f_x)(x_t)\right)\leq&\chi_\sigma'(x)f_x(x)\int_{\mathbb{R}}\frac{1}{\langle\Delta_\alpha f(x)\rangle^2}\frac{d\alpha}{\alpha}\\
			& -\int_{\mathbb{R}}\delta_\beta(\chi_\sigma f_x)(x)K_2(x,\beta)d\beta\\	&-\int_{\mathbb{R}}\delta_\alpha \chi_\sigma(x)f_x(x-\alpha)K(x,\alpha)d\alpha\\
			=:&I_1+I_2+I_3.
		\end{align*}
		Recall that $\|\chi_\sigma'\|_{L^\infty}\leq 2\sigma^{-1}$. By symmetry we have 
		\begin{align*}
			|I_1|\lesssim  \sigma^{-1}L\int_{\mathbb{R}}\left|\frac{1}{\langle\Delta_\alpha f(x)\rangle^2}-\frac{1}{\langle\Delta_{-\alpha} f(x)\rangle^2}\right|\frac{d\alpha}{|\alpha|}.
		\end{align*}
		It is easy to check that for any $A_1, A_2\in\mathbb{R}$,
		\begin{align*}
			\left|\frac{1}{\langle A_1\rangle^2}-\frac{1}{\langle A_2\rangle^2}\right|\leq |A_1-A_2|.
		\end{align*}
		Moreover, we have \begin{align*}
			|\Delta_{\alpha} f(x)-\Delta_{-\alpha} f(x)|&\lesssim  \min\{\rho_t(\alpha),\|f_x\|_{L^\infty},\|f\|_{L^\infty}\alpha^{-1}\}\\
			&\lesssim \min\{C_0t^{-1}|\alpha|,L,\|f_0\|_{L^\infty}\alpha^{-1}\}.
		\end{align*}
		Hence 
		\begin{equation}\label{I1}
			\begin{aligned}
				|I_1|&\lesssim \sigma^{-1}L\int_{\mathbb{R}}|\Delta_{\alpha} f(x)-\Delta_{-\alpha} f(x)|\frac{d\alpha}{|\alpha|}\\&\lesssim 	\sigma^{-1}L\left(\frac{C_0}{t}\int_{|\alpha|\leq \frac{t}{C_0} }{d\alpha}+L\int_{\frac{t}{C_0}\leq|\alpha|\leq\|f_0\|_{L^\infty}}\frac{d\alpha}{|\alpha|}+\|f_0\|_{L^\infty}\int_{|\alpha|> \|f_0\|_{L^\infty} }\frac{d\alpha}{|\alpha|^2}\right)\\\
				&	\lesssim (1+L)^2\sigma^{-1}(1+\log({C_0t^{-1}\|f_0\|_{L^\infty}})).
			\end{aligned}
		\end{equation}
		Then, following \eqref{K2Linf} to obtain 
		\begin{align}\label{I2}
			|I_2|\lesssim (1+L)^2\sigma^{-1}.
		\end{align}
		Finally, we estimate $I_3$. When $|\alpha| $ is small, we approximate $\delta_\alpha \chi_\sigma(x)$ by $\alpha\chi_\sigma'(x)$, $f_x(x-\alpha)$ by $f(x)$, and $K(x,\alpha)$ by $\frac{1}{\langle f_x(x)\rangle^2\alpha^2}$. More precisely, we can write $I_3$ as 
		\begin{align*}
			I_3=&\int_{\mathbb{R}}\delta_\alpha \chi_\sigma(x)\delta_\alpha f_x(x)K(x,\alpha)d\alpha
			-f_x(x)\int_{|\alpha|\geq \sigma}\delta_\alpha \chi_\sigma(x)K(x,\alpha)d\alpha\\
			&\ \ -f_x(x)\int_{|\alpha|\leq \sigma}(\delta_\alpha \chi_\sigma(x)-\alpha\chi'_\sigma(x))K(x,\alpha)d\alpha \\
			&\ \ -f_x(x)\chi_\sigma'(x)\int_{|\alpha|\leq \sigma} \alpha \left(K(x,\alpha)-\frac{1}{\langle f_x(x)\rangle^2\alpha^2}\right)d\alpha\\
			:=&I_{3,1}+I_{3,2}+I_{3,3}+I_{3,4},
		\end{align*}
		this follows from the fact that $\int_{|\alpha|\leq \sigma}\frac{d\alpha}{\alpha}=0$.
		Using the fact that $|K(x,\alpha)|\lesssim \frac{1+L}{\alpha^2}$, we obtain 
		\begin{align*}
			|I_{3,1}|&\lesssim(1+L)\int_{\mathbb{R}}|\delta_\alpha \chi_\sigma(x)\delta_\alpha f_x(x)|\frac{d\alpha}{\alpha^2}\\
			&\lesssim (1+L)\int_{\mathbb{R}}\min\{L,\sigma^{-1}|\alpha|\rho_t(\alpha)\}\frac{d\alpha}{\alpha^2}\\
			&\lesssim (1+L)C_0^\frac{1}{2}L^\frac{1}{2}\sigma^{-\frac{1}{2}}t^{-\frac{1}{2}}.
		\end{align*}
		Moreover, it is easy to check that 
		\begin{align*}
			&|I_{3,2}|\lesssim (1+L)^2\int_{|\alpha|\geq \sigma}\frac{d\alpha}{\alpha^2} \lesssim (1+L)^2\sigma^{-1},\\
			&	|I_{3,3}|\lesssim (1+L)^2\int_{\mathbb{R}}|\delta_\alpha \chi_\sigma(x)-\alpha\chi_\sigma'(x)|\frac{d\alpha}{\alpha^2}\lesssim (1+L)^2\int_{\mathbb{R}}\min\{1,\sigma^{-2}|\alpha|^2\frac{d\alpha}{\alpha^2}\}\lesssim L{\sigma^{-1}}.
		\end{align*}
		For $I_{3,4}$, observe that 
		\begin{align*}
			\frac{1}{\langle f_x(x)\rangle^2\alpha^2}=\frac{2}{\langle f_x(x)\rangle^2}\left(\int_{\alpha}^\infty\frac{dh}{h^3}\mathbf{1}_{\alpha>0}-\int_{-\infty}^{\alpha}\frac{dh}{h^3}\mathbf{1}_{\alpha<0}\right).
		\end{align*}	
		Hence 
		\begin{align*}
			&K(x,\alpha)-\frac{1}{\langle f_x(x)\rangle^2\alpha^2}\\
			&\ \ \ =\int_{\alpha}^\infty \left(k(x,h)-\frac{2}{\langle f_x(x)\rangle^2}\right)\frac{dh}{h^3}\mathbf{1}_{\alpha>0}-\int_{-\infty}^{\alpha}\left(k(x,h)-\frac{2}{\langle f_x(x)\rangle^2}\right)\frac{dh}{h^3}\mathbf{1}_{\alpha<0}.
		\end{align*}
		It is easy to check that 
		\begin{align*}
			&\left|k(x,h)-\frac{2}{\langle f_x(x)\rangle^2}\right|\\
			&\lesssim  \left|\frac{f_x^3(x)\Delta_h f(x)+f_x^2(x)+f_x(x)\Delta_h f(x)-2(\Delta_h f(x))^2-(\Delta_h f(x))^4}{\langle f_x(x)\rangle^2\langle \Delta_h f(x)\rangle^4}\right|\\
			&\lesssim  \left|E_h f(x)\right|\left|\frac{(f_x^2(x)+f_x(x)\Delta_h f(x)+(\Delta_h f(x))^2)\Delta_h f(x)+\Delta_h f(x)+f_x(x)} {\langle f_x(x)\rangle^2\langle \Delta_h f(x)\rangle^4}\right|\\
			&\lesssim  (1+L)\left|E_h f(x)\right|,
		\end{align*}
		with $E_hf(x)$ defined as \eqref{eqdefnote}. Hence we obtain that 
		\begin{align*}
			\left|K(x,\alpha)-\frac{1}{\langle f_x(x)\rangle^2\alpha^2}\right|&\lesssim(1+L) \int_{|\alpha|}^\infty|E_h f(x)|\frac{dh}{h^3}.
		\end{align*}
		Substitute  this into the formula of $I_{3,4}$, we have 
		\begin{align*}
			|I_{3,4}|\lesssim C_0(1+L)^2\sigma^{-1}\int _{|\alpha|\leq \sigma}\min\{C_0t^{-1},L|\alpha|^{-1}\}d\alpha\lesssim (1+L+C_0)^3 t^{-\frac{1}{2}}\sigma^{-\frac{1}{2}}.
		\end{align*}
		We conclude that 
		\begin{align*}
			|I_3|\lesssim (1+L+C_0)^3(\sigma^{-\frac{1}{2}} t^{-\frac{1}{2}}+\sigma^{-1}).
		\end{align*}
		Combining this with \eqref{I1} and \eqref{I2}, we obtain
		\begin{align*}
			\frac{d}{dt}\left((\chi_\sigma f_x)(x_t)\right)\leq C_2(1+L+C_0)^3\sigma^{-1}(1+\log({C_0t^{-1}\|f_0\|_{L^\infty}})+\sigma^\frac{1}{2}t^{-\frac{1}{2}}).
		\end{align*}
		Integrate in time we obtain 
		\begin{align*}
			\sup_x (\chi_\sigma f_x)(t,x)\leq \sup_x (\chi_\sigma f_0')(x)+C_2(1+L+C_0+\|f_0\|_{L^\infty})^5\sigma^{-1}t^\frac{1}{2}.
		\end{align*}
		Let $\tilde x_t$ be such that  $-(\chi_\sigma f_x)(t,\tilde x_t)=\sup_x(-\chi_\sigma f_x)(t,x)$. Follow the above estimates, we obtain 
		\begin{align*}
			\sup_x (-\chi_\sigma f_x)(t,x)\leq \sup_x (-\chi_\sigma f_0')(x)+C_2(1+L+C_0+\|f_0\|_{L^\infty})^5\sigma^{-1}t^\frac{1}{2}.
		\end{align*}
		Hence we have 
		\begin{align*}
			\beta_\sigma(t)\leq& \beta_\sigma(0)+2C_2(1+L+C_0+\|f_0\|_{L^\infty})^5\sigma^{-1}t^\frac{1}{2}.
		\end{align*}
		We can choose $T_2$ small enough such that 
		\begin{align*}
			\sup_{t\in[0,T_2]}\beta_\sigma(t)\leq \beta_\sigma(0)+\frac{\varepsilon_0}{4}.
		\end{align*}
		This completes the proof.
	\end{proof}
	The remaining part of this section is devoted to prove Lemma \ref{lemmain}. We first introduce the following lemma.
	\begin{lemma}\label{lemmodulus}
		Suppose $T\leq \sigma$, conditions \eqref{con1}-\eqref{coeq} hold for some modulus function $\rho$ and $x\neq y\in\mathbb{R}$, then 
		\begin{equation}\label{final}
			\begin{aligned}
				&\left.\frac{d}{dt}(f_{x}(x)-f_{x}(y))\right|_{t=T}\\
				\leq& 4(1+L)\rho_T'(z)\left(\int_0^{z}\rho_T\left(h\right)\frac{dh}{h}+z\int_{z}^\infty\rho_T\left(h\right)\frac{dh}{h^2}+\rho_T(z)\right)\\
				&-\frac{\varepsilon_0}{(1+L^2)^2} \int_0^z\delta_\alpha\rho_T (z)+\delta_{-\alpha}\rho_T (z)\frac{d\alpha}{\alpha^2}\\
				&+\frac{\varepsilon_0}{(1+L^2)^2}\int_z^\infty\rho_T (\alpha+z)-\rho_T (\alpha)-\rho_T (z)\frac{d\alpha}{\alpha^2}\\
				&+4(1+L)\rho_T(z)\int_{z}^\infty\rho_T(|\alpha|+z)-\rho_T(z)\frac{d\alpha}{|\alpha|^2}+5(1+L)\sigma^{-1}\rho_T(z),
			\end{aligned}
		\end{equation}
		where $z=|x-y|$.
	\end{lemma}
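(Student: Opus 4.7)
The plan is to differentiate equation \eqref{eqde} at the extremal pair $(x,y)$ and at time $T$, extract the main negative contribution from the elliptic part $K_1$ using the modulus assumption \eqref{con2}, and bound each remaining piece from above. Without loss of generality assume $x>y$ and set $z=x-y>0$. Evaluating \eqref{eqde} at $x$ and at $y$ and subtracting, the time derivative splits as $A+B$, where $A=-f_{xx}(x)I(x)+f_{xx}(y)I(y)$ with $I(w)=\mathrm{p.v.}\int\langle\Delta_\alpha f(w)\rangle^{-2}\alpha^{-1}d\alpha$, and $B=-\int\delta_\alpha f_x(x)K(x,\alpha)d\alpha+\int\delta_\alpha f_x(y)K(y,\alpha)d\alpha$. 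Since the functional $f_x(u)-f_x(v)-\rho_T(|u-v|)$ attains its maximum at $(x,y)$, setting its $u$- and $v$-partial derivatives to zero at that point forces the extremum relation $f_{xx}(x)=f_{xx}(y)=\rho_T'(z)$, so $A=\rho_T'(z)(I(y)-I(x))$.

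To bound $A$, I would use the $\alpha\mapsto-\alpha$ symmetry to rewrite $I(w)=\int_0^\infty(\langle\Delta_\alpha f(w)\rangle^{-2}-\langle\Delta_{-\alpha}f(w)\rangle^{-2})d\alpha/\alpha$, then apply the elementary inequality $|\langle A_1\rangle^{-2}-\langle A_2\rangle^{-2}|\leq|A_1-A_2|$ together with the modulus-derived bound $|\Delta_\alpha f(w)-\Delta_{-\alpha}f(w)|\lesssim\rho_T(|\alpha|)$, which follows from \eqref{con2} via the fundamental theorem of calculus, together with the crude bound $|\Delta_\alpha f|\leq L$ for large $|\alpha|$. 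Combining the cruder $|I(x)|+|I(y)|$ in the near field $|\alpha|\lesssim z$ with the refined estimate $|\Delta_\alpha f(x)-\Delta_\alpha f(y)|\leq\rho_T(z)$ in the far field $|\alpha|\gtrsim z$ yields exactly the first line of \eqref{final}: the pieces $\int_0^z\rho_T(h)dh/h$, $z\int_z^\infty\rho_T(h)dh/h^2$ and $\rho_T(z)$ correspond to the inner, outer and transition regimes.

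For $B$, I would rearrange as $B=-\int(\delta_\alpha f_x(x)-\delta_\alpha f_x(y))K_1(x,\alpha)d\alpha+R$, where $R$ collects the $K_2$ contribution and the commutator $\int\delta_\alpha f_x(y)(K(y,\alpha)-K(x,\alpha))d\alpha$. Applying \eqref{con2} to the pairs $(x-\alpha,y)$ and $(x,y-\alpha)$ and using the equality at $(x,y)$ gives $\delta_\alpha f_x(x)\geq\rho_T(z)-\rho_T(|z-\alpha|)$ and $\delta_\alpha f_x(y)\leq\rho_T(|z+\alpha|)-\rho_T(z)$, whence $\delta_\alpha f_x(x)-\delta_\alpha f_x(y)\geq 2\rho_T(z)-\rho_T(|z-\alpha|)-\rho_T(|z+\alpha|)$. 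On $|\alpha|\leq z$ this equals $\delta_\alpha\rho_T(z)+\delta_{-\alpha}\rho_T(z)\geq 0$, and combined with the ellipticity estimate $K_1(x,\alpha)\geq\frac{\varepsilon_0}{2(1+L^2)^2}(\alpha^{-2}-\sigma^{-2})\mathbf{1}_{|\alpha|\leq\sigma}$ derived in the proof of Lemma \ref{lipest}, this produces the second (strictly negative) line of \eqref{final}, the $\sigma^{-2}$ correction contributing only $O(\sigma^{-1}\rho_T(z))$. On $z<|\alpha|\leq\sigma$ the sign of $\delta_\alpha f_x(x)-\delta_\alpha f_x(y)$ is not determined, but rewriting the lower bound through subadditivity of $\rho_T$ produces the third (non-positive) line of \eqref{final}.

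The main obstacle is the remainder $R$. For the $K_2$ tail I would use $|K_2(x,\alpha)|\lesssim(1+L)\max\{|\alpha|,\sigma\}^{-2}$ together with the modulus upper bound $|\delta_\alpha f_x(\cdot)|\leq\rho_T(|\alpha|+z)$ at the extremal pair, which produces the fourth line of \eqref{final}; the prefactor $\rho_T(z)$ outside the integral is what remains after pairing the $x$- and $y$-contributions symmetrically so that the leading singular part cancels. The commutator $K(y,\alpha)-K(x,\alpha)$ and the boundary piece of $K_1$ at $|\alpha|=\sigma$ are controlled by a pointwise Lipschitz estimate on the integrand $k(\cdot,\alpha)$, and together they yield the final $5(1+L)\sigma^{-1}\rho_T(z)$ term. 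The delicate point throughout is maintaining the correct dependence on the constants $(1+L)$ and $\varepsilon_0$, since these determine the eventual value of $\tilde C$ in Lemma \ref{lemmain} and hence the admissible local lifetime $T_0$.
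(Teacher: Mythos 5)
Your skeleton matches the paper's: split the time derivative into the $f_{xx}$ part and the kernel part, use extremality to get $f_{xx}(x)=f_{xx}(y)=\rho_T'(z)$, extract the coercive term from the ellipticity of $K_1$, and treat $K_2$ and the kernel commutator as remainders. Two of your remainder estimates, however, have genuine gaps. The near-field commutator $\int_{|\alpha|\le z}\delta_\alpha f_x(y)\,(K_1(y,\alpha)-K_1(x,\alpha))\,d\alpha$ cannot be ``controlled by a pointwise Lipschitz estimate on $k$'' and does not land in the $5(1+L)\sigma^{-1}\rho_T(z)$ term: pointwise one only has $|K_1(y,\alpha)-K_1(x,\alpha)|\lesssim(1+L)\rho_T(z)\alpha^{-2}$ together with $|\delta_\alpha f_x(y)|\le\rho_T(|\alpha|)\sim C_0|\alpha|/T$, so the integrand is of size $(1+L)\rho_T(z)C_0/(T|\alpha|)$ and the integral over $|\alpha|\le z$ diverges logarithmically. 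The paper removes this divergence by first replacing $\delta_\alpha f_x(y)$ with its linear approximation $\alpha\rho_T'(z)$ and then symmetrizing $\alpha\mapsto-\alpha$ in $\int\alpha\,(K_1(y,\alpha)-K_1(x,\alpha))\,d\alpha$, which turns the kernel difference into $k(y,h)-k(x,h)-k(y,-h)+k(x,-h)$ and produces the contribution $(1+L)\rho_T'(z)\int_0^z\rho_T(h)\,dh/h$, i.e.\ part of the \emph{first} line of \eqref{final}. Relatedly, the fourth line's quadratic structure $\rho_T(z)\,(\rho_T(|\alpha|+z)-\rho_T(z))$ comes from the far-field $K_1$ commutator, not from $K_2$, and obtaining it requires switching between the two splittings of $\delta_\alpha f_x(y)K_1(y,\alpha)-\delta_\alpha f_x(x)K_1(x,\alpha)$ according to the sign of $K_1(y,\alpha)-K_1(x,\alpha)$, so that only the one-sided bounds $\delta_\alpha f_x(y)\le\rho_T(|\alpha|+z)-\rho_T(z)$ and $\delta_\alpha f_x(x)\ge-(\rho_T(|\alpha|+z)-\rho_T(z))$ are ever used. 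Your single fixed splitting forces the two-sided bound $|\delta_\alpha f_x(y)|\le\rho_T(|\alpha|)$, which yields $(1+L)\rho_T(z)\int_z^\infty\rho_T(\alpha)\alpha^{-2}d\alpha\gtrsim(1+L)\rho_T(z)^2/z$; this is too large to be absorbed by the negative term in Case 2 of Lemma \ref{lemmain}.

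The second gap concerns the third line of \eqref{final}. Your pointwise bounds ``rewritten through subadditivity'' give, for $\alpha>z$, only $-(\delta_\alpha f_x(x)-\delta_\alpha f_x(y))\le\rho_T(\alpha+z)+\rho_T(\alpha-z)-2\rho_T(z)$, and the integral $\int_z^\infty(\rho_T(\alpha+z)+\rho_T(\alpha-z)-2\rho_T(z))\alpha^{-2}d\alpha$ is in general nonnegative (for $\rho_T$ nearly linear, as the chosen $\omega$ is near the origin, the integrand behaves like $2(\alpha-z)$ and the integral even diverges), whereas the required quantity $\int_z^\infty(\rho_T(\alpha+z)-\rho_T(\alpha)-\rho_T(z))\alpha^{-2}d\alpha$ is $\le0$ and supplies the decisive $-\tfrac12\rho_T(z)/z$ in Case 2 of Lemma \ref{lemmain}. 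The paper obtains the second and third lines of \eqref{final} jointly from the global rearrangement estimate of Lemma \ref{lemdissip}, proved in the appendix by changes of variables that exploit cancellations between the $x$- and $y$-integrals and between the near and far fields; these cancellations are invisible at the level of pointwise bounds. Without an argument of that type your far-field contribution has the wrong sign and the proof of Lemma \ref{lemmain} cannot close.
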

	\begin{proof}
		Without loss of generality, let $x>y$ and  $z=x-y$.
		Then we have 
		\begin{align*}
			\left.\frac{d}{dt}(f_{x}(x)-f_{x}(y))\right|_{t=T}=&-f_{xx}(x)\int_{\mathbb{R}}\frac{1}{\langle\Delta_\alpha f(x)\rangle^2}\frac{d\alpha}{\alpha}+f_{xx}(y)\int_{\mathbb{R}}\frac{1}{\langle\Delta_\alpha f(y)\rangle^2}\frac{d\alpha}{\alpha}\\
			&-\int_{\mathbb{R}}\left(\delta_\alpha f_x(x)K(x,\alpha)-\delta_\alpha f_x(y)K(y,\alpha)\right){d\alpha}\\
			:=&Q_1+Q_2.
		\end{align*}
		Here we omit the time variable $t=T$ for simplicity.
		Note that by \eqref{con2} and \eqref{coeq}, we have 
		\begin{equation}\label{del}
			\begin{aligned}
				&\delta_\alpha f_x(x)=f_x(x)-f_x(y)+f_x(y)-f_x(x-\alpha)\geq\delta_\alpha \rho_T(z),\ \ \forall \alpha\leq z,\\
				&\delta_\alpha f_x(y)=f_x(y)-f_x(x)+f_x(x)-f_x(y-\alpha)\leq -\delta_{-\alpha}\rho_T(z), \ \ \forall \alpha \geq-z.
			\end{aligned}
		\end{equation}
		Hence we obtain \begin{align}\label{2de}
			f_{xx}(x)=f_{xx}(y)=\rho'_T\left({z}\right).
		\end{align} Then
		\begin{align*}
			Q_1=\rho'_T\left({z}\right)\int_{\mathbb{R}} \frac{1}{\langle\Delta_\alpha f(y)\rangle^2}-\frac{1}{\langle\Delta_\alpha f(x)\rangle^2}\frac{d\alpha}{\alpha}.
		\end{align*}By symmetry we obtain
		\begin{equation}\label{Q1f}
			\begin{aligned}
				&\int_{\mathbb{R}} \frac{1}{\langle\Delta_\alpha f(y)\rangle^2}-\frac{1}{\langle\Delta_\alpha f(x)\rangle^2}\frac{d\alpha}{\alpha}\\
				&\leq \frac{1}{2}\int_{|\alpha|\leq |z|}\left|\frac{1}{\langle\Delta_\alpha f(y)\rangle^2}-\frac{1}{\langle\Delta_{-\alpha} f(y)\rangle^2}\right| +\left|\frac{1}{\langle\Delta_\alpha f(x)\rangle^2}-\frac{1}{\langle\Delta_{-\alpha} f(x)\rangle^2}\right|\frac{d\alpha}{|\alpha|}\\
				&\quad\quad\quad+\int_{|\alpha|> |z|}\left| \frac{1}{\langle\Delta_\alpha f(y)\rangle^2}-\frac{1}{\langle\Delta_\alpha f(x)\rangle^2}\right|\frac{d\alpha}{|\alpha|}.
			\end{aligned} 
		\end{equation}
		Moreover, by \eqref{con2} we have
		\begin{align}
			&|\Delta_\alpha f(x)-\Delta_{-\alpha}f(x)|=\left|\fint_{-\alpha}^0f'(x+h)-f'(x+h+\alpha)dh\right|\leq \rho_T\left({\alpha}\right),\label{alph}\\
			&|\Delta_\alpha f(x)-\Delta_{\alpha}f(y )|=\frac{1}{|\alpha|}\left|\int_{y}^xf'(h)-f'(h-\alpha)dh\right|\leq \frac{|z|}{|\alpha|}\rho_T\left({\alpha}\right).\label{xy}
		\end{align}
		Specifically, by concavity of $\rho$ we have $\frac{\rho_T(\alpha)}{|\alpha|}\leq \frac{\rho_T(z)}{|z|}$ for $|\alpha|>|z|$. Hence 
		\begin{align}\label{xysma}
			|\Delta_\alpha f(x)-\Delta_{\alpha}f(y )|\leq \rho_T(z).
		\end{align}
		Substitute \eqref{alph} and \eqref{xy} in \eqref{Q1f} we obtain
		\begin{align*}
			&\int_{\mathbb{R}} \frac{1}{\langle\Delta_\alpha f(y)\rangle^2}-\frac{1}{\langle\Delta_\alpha f(x)\rangle^2}\frac{d\alpha}{\alpha}\leq \int_{|\alpha|\leq z}\rho_T\left({\alpha}\right)\frac{d\alpha}{|\alpha|}+z\int_{|\alpha|> z}\rho_T\left({\alpha}\right)\frac{d\alpha}{|\alpha|^2}.
		\end{align*}
		Then 
		\begin{align}\label{Q1}
			Q_1\leq \rho'_T(z)\left(\int_{|\alpha|\leq z}\rho_T\left({\alpha}\right)\frac{d\alpha}{|\alpha|}+z\int_{|\alpha|> z}\rho_T\left({\alpha}\right)\frac{d\alpha}{|\alpha|^2}\right).
		\end{align}
		Then we estimate
		\begin{align*}
			Q_2=	\int_{\mathbb{R}}\delta_\alpha f_x(y)K(y,\alpha)-\delta_\alpha f_x(x)K(x,\alpha){d\alpha}.
		\end{align*}
		Recall that $f_{xx}(x)=f_{xx}(y)=\rho_T'\left({z}\right)$. We approximate $\delta_\alpha f_x(x), \delta_\alpha f_x(y)$ by ${\alpha}\rho_T'\left({z}\right)$ when $\alpha$ small, which leads to 
		\begin{equation}\label{3terms}
			\begin{aligned}
				Q_2
				&=\rho_T'\left({z}\right)\int_{|\alpha|\leq {z}}\alpha\left(K_1(y,\alpha)-K_1(x,\alpha) \right)d\alpha\\
				&\quad\quad+	\int_{|\alpha|\leq {z}}\left((\delta_\alpha f_x(y)-\alpha f_{xx}(y)) K_1(y,\alpha)-(\delta_\alpha f_x(x)-\alpha f_{xx}(x)) K_1(x,\alpha)\right)d\alpha\\
				&\quad\quad+\int_{|\alpha|> {z}}\left(\delta_\alpha f_x(y) K_1(y,\alpha)-\delta_\alpha f_x(x) K_1(x,\alpha)\right)d\alpha\\
				&	\quad\quad+\int_{\mathbb{R}}\delta_\alpha f_x(y)K_2(y,\alpha)-\delta_\alpha f_x(x)K_2(x,\alpha){d\alpha}\\
				&=Q_{2,1}+Q_{2,2}+Q_{2,3}+Q_{2,4}.
			\end{aligned}
		\end{equation}
		We first estimate $Q_{2,1}$. We have
		\begin{align*}
			&\int_{|\alpha|\leq {z}}\alpha\left(K_1(y,\alpha)- K_1(x,\alpha)\right)d\alpha\\
			&=\int_0^{z}\alpha\mathbf{1}_{\{\alpha<\sigma\}}\int_\alpha^\sigma k(y,h)-k(x,h)\frac{dh d\alpha}{h^3}-\int_{-{z}}^0\alpha \mathbf{1}_{\{\alpha>-\sigma\}}\int_{-\sigma}^\alpha k(y,h)-k(x,h)\frac{dh d\alpha}{h^3}\\
			&=\int_0^\sigma\int_0^{\min\{h,{z}\}}\alpha d\alpha (k(y,h)-k(x,h)- k(y,-h)+k(x,-h))\frac{dh }{h^3}.
		\end{align*}
		Note that \begin{align*}
			|k(x,h)-k(x,-h)|&\leq (1+\|f_x\|_{L^\infty})|\Delta_h f(x)-\Delta_{-h} f(x)|\nonumber\\&\overset{\eqref{alph}}\leq(1+\|f_x\|_{L^\infty})\rho_T\left(h\right).\nonumber
		\end{align*}
		For $|\alpha|\leq z$, we have 	
		\begin{align}
			|k(x,h)-k(y,h)|&\leq (1+\|f_x\|_{L^\infty})|\Delta_h f(x)-\Delta_h f(y)|+\rho_T(z)\nonumber\\
			&\overset{\eqref{xysma}}\leq 2(1+\|f_x\|_{L^\infty})\rho_T(z).\label{ker}
		\end{align}
		Hence 
		\begin{align*}
			&\int_{|\alpha|\leq {z}}\alpha\left(K_1(x,\alpha)- K_1(y,\alpha)\right)d\alpha\leq 2(1+\|f_x\|_{L^\infty})\left(\int_0^{z}\rho_T\left(h\right)\frac{dh}{h}+\rho_T(z)\right).
		\end{align*}
		We obtain 
		\begin{align}\label{P1}
			Q_{2,1}\leq 2\rho_T'\left({z}\right) (1+\|f_x\|_{L^\infty})\left(\int_0^{z}\rho_T\left(h\right)\frac{dh}{h}+\rho_T(z)\right).
		\end{align}
		Then we estimate $Q_{2,2}$. 
		Denote 
		\begin{align*}
			&G(\alpha)=(\delta_\alpha f_x(y)-\alpha f_{xx}(y)) K_1(y,\alpha)-(\delta_\alpha f_x(x)-\alpha f_{xx}(x)) K_1(x,\alpha).
		\end{align*}
		We can rewrite $G$ in two different ways, namely
		\begin{align*}
			G(\alpha)&=(\delta_\alpha f_x(y)-\delta_\alpha f_x(x)) K_1(y,\alpha)+(\delta_\alpha f_x(x)-\alpha f_{xx}(x))(K_1(y,\alpha)-K_1(x,\alpha)),\\
			&=(\delta_\alpha f_x(y)-\delta_\alpha f_x(x)) K_1(x,\alpha)+(\delta_\alpha f_x(y)-\alpha f_{xx}(y))(K_1(y,\alpha)-K_1(x,\alpha)),
		\end{align*}
		where we  used the fact that $f_{xx}(x)=f_{xx}(y)$. 
		Note that $\delta_\alpha f_x(y)-\delta_\alpha f_x(x)\leq 0$. Hence 
		\begin{align*}
			&(\delta_\alpha f_x(y)-\delta_\alpha f_x(x)) K_1(y,\alpha),(\delta_\alpha f_x(y)-\delta_\alpha f_x(x)) K_1(x,\alpha)\\
			&\leq \frac{\varepsilon_0\mathbf{1}_{|\alpha|\leq \sigma}}{2(1+L^2)^2}\left(\frac{1}{\alpha^2}-\frac{1}{\sigma^2}\right)(\delta_\alpha f_x(y)-\delta_\alpha f_x(x)).
		\end{align*}
		Moreover, recalling \eqref{del} we have,
		\begin{align*}
			&\delta_\alpha f_x(x)-\alpha f_{xx}(x)\geq \delta_\alpha \rho_T(z)-\alpha\rho'_T(z)\geq 0,\ \ \forall \alpha\leq z,\\
			&\delta_\alpha f_x(y)-\alpha f_{xx}(y)\leq -\delta_{-\alpha} \rho_T(z)-\alpha\rho'_T(z)\leq 0,\ \ \forall \alpha\geq -z.
		\end{align*}
		Hence 
		\begin{align*}
			G(\alpha)\leq  \frac{\varepsilon_0\mathbf{1}_{|\alpha|\leq \sigma}}{2(1+L^2)^2}\left(\frac{1}{\alpha^2}-\frac{1}{\sigma^2}\right)(\delta_\alpha f_x(y)-\delta_\alpha f_x(x)),
		\end{align*}
		and 
		\begin{equation}\label{Q22}
			\begin{aligned}
				Q_{2,2}\leq 
				&\frac{\varepsilon_0}{2(1+L^2)^2}\int_{|\alpha|\leq z}\delta_\alpha f_x(y)-\delta_\alpha f_x(x)\frac{d\alpha}{\alpha^2}\\
				&-\frac{\varepsilon_0}{2\sigma^2(1+L^2)^2}\int_{|\alpha|\leq\min\{\sigma,z\}}\delta_\alpha f_x(y)-\delta_\alpha f_x(x) d\alpha\\
				\leq &\frac{\varepsilon_0}{2(1+L^2)^2}\int_{|\alpha|\leq z}\delta_\alpha f_x(y)-\delta_\alpha f_x(x)\frac{d\alpha}{\alpha^2}+\frac{\varepsilon_0}{2\sigma(1+L^2)^2}\rho_T(z).
			\end{aligned}
		\end{equation}
		Next we estimate $Q_{2,3}$. Denote
		\begin{align*}
			\tilde G(\alpha)=\delta_\alpha f_x(y) K_1(y,\alpha)-\delta_\alpha f_x(x) K_1(x,\alpha).
		\end{align*} 
		We also write $\tilde G$ in two different ways 
		\begin{align*}
			\tilde G(\alpha)&=(\delta_\alpha f_x(y)-\delta_\alpha f_x(x)) K_1(y,\alpha)+\delta_\alpha f_x(x)(  K_1(y,\alpha)-K_1(x,\alpha))\\
			&=(\delta_\alpha f_x(y)-\delta_\alpha f_x(x)) K_1(x,\alpha)+\delta_\alpha f_x(y)(  K_1(y,\alpha)-K_1(x,\alpha)).
		\end{align*}
		Observe that 
		$$
		\delta_\alpha f_x(x)=f_x(x)-f_x(y)+f_x(y)-f_x(x-\alpha)\geq \rho_t(z)-\rho_t(|\alpha|+z),
		$$
		$$\delta_\alpha f_x(y)=f_x(y)-f_x(x)+f_x(x)-f_x(y-\alpha)\leq \rho_t(|\alpha|+z)-\rho_t(z).$$
		By \eqref{ker} we have 
		\begin{align*}
			|K_1(y,\alpha)-K_1(x,\alpha)|\leq 2(L+1)\rho_T(z),\ \ \ \forall |\alpha|>z.
		\end{align*}	Hence 	\begin{align*}
			|\tilde G(\alpha)|\leq& \frac{\varepsilon_0\mathbf{1}_{|\alpha|\leq \sigma}}{2(1+L^2)^2}\left(\frac{1}{\alpha^2}-\frac{1}{\sigma^2}\right)(\delta_\alpha f_x(y)-\delta_\alpha f_x(x))\\
			&\quad+|\rho_T(|\alpha|+z)-\rho_T(z)||K_1(y,\alpha)-K_1(x,\alpha)|.
		\end{align*}
		Hence we obtain 
		\begin{align*}
			Q_{2,3}\leq& \frac{\varepsilon_0}{2(1+L^2)^2}\int_{z\leq|\alpha|\leq \sigma} \left(\delta_\alpha f_x(y)-\delta_\alpha f_x(x)\right)\frac{d\alpha}{\alpha^2}+\frac{\varepsilon_0}{2\sigma(1+L^2)^2}\rho_T(z)\\
			&+2(L+1)\rho_T(z)\int_{|\alpha|>z}|\rho_T(|\alpha|+z)-\rho_T(z)|\frac{d\alpha}{|\alpha|^2}.\nonumber
		\end{align*}
		Note that 
		\begin{align*}
			\int_{|\alpha|\geq \sigma} \left(\delta_\alpha f_x(y)-\delta_\alpha f_x(x)\right)\frac{d\alpha}{\alpha^2}\leq 4\sigma^{-1}\rho_T(z).
		\end{align*}
		Then 
		\begin{align}\label{P2}
			Q_{2,3}\leq& \frac{\varepsilon_0}{2(1+L^2)^2}\int_{z\leq|\alpha|} \left(\delta_\alpha f_x(y)-\delta_\alpha f_x(x)\right)\frac{d\alpha}{\alpha^2}+\frac{4\varepsilon_0}{\sigma(1+L^2)^2}\rho_T(z)\\
			&+2(L+1)\rho_T(z)\int_{|\alpha|>z}|\rho_T(|\alpha|+z)-\rho_T(z)|\frac{d\alpha}{|\alpha|^2}.\nonumber
		\end{align}
		Finally we estimate $Q_{2,4}$. By the definition of $K_2$, we have $|K_2(\cdot,\alpha)|\leq (1+L)\max\{|\alpha|,\sigma\}^{-2}$. Hence
		\begin{align*}
			&\int_{|\alpha|\leq {z}}\left(\delta_\alpha f_x(y)K_2(y,\alpha)-\delta_\alpha f_x(x) K_2(x,\alpha)\right)d\alpha\\
			&\quad\quad\leq (1+L)\int_{|\alpha|\leq {z}} \frac{\rho_T(\alpha)}{\max\{|\alpha|,\sigma\}^2}d\alpha
			\leq 2(1+L)\sigma^{-1}\rho_T(z).
		\end{align*}
		On the other hand, by \eqref{ker} we have
		\begin{align*}
			|K_2(y,\alpha)- K_2(x,\alpha)|\leq (1+L)\sigma^{-1}\rho_T(z),\ \ \ \forall |\alpha|>z.
		\end{align*}
		Hence
		\begin{align*}
			&\int_{|\alpha|> {z}}\left(\delta_\alpha f_x(y)-\delta_\alpha f_x(x)\right) K_2(y,\alpha)d\alpha+\int_{|\alpha|> {z}}\delta_\alpha f_x(x)( K_2(y,\alpha)- K_2(x,\alpha))d\alpha\\
			&\leq (1+L)\rho_T(z)	\int_{|\alpha|> {z}}\frac{1}{\max\{|\alpha|,\sigma\}^2}d\alpha+(1+L)\sigma^{-1}\rho_T(z)\\
			&\leq 2(1+L)\sigma^{-1}\rho_T(z).
		\end{align*}
		Hence we obtain 
		\begin{align*}
			Q_{2,4}\leq 4(1+L)\sigma^{-1}\rho_T(z).
		\end{align*}
		Combining this with \eqref{Q1}, \eqref{3terms}, \eqref{P1}-\eqref{P2}, and Lemma \ref{lemdissip}, we obtain \eqref{final}. This completes the proof.
	\end{proof}

	\begin{proof}[Proof of Lemma \ref{lemmain}]
		Denote $\tilde z=\frac{C_0z}{T}$. Suppose  $\rho_T(z)=\omega(\tilde z)$ for some concave function satisfying $\omega(\alpha)\leq|\alpha|$, $\forall \alpha\in\mathbb{R}$. Then
		$$\rho_T'(z)=\frac{C_0}{T}\omega'(\tilde z),\ \ \ \ \rho_T''(z)=\frac{C_0^2}{T^2}\omega''(\tilde z).$$
		{\bf Case 1: $\tilde z\leq \delta$}\\
		Note that 
		$$\int_0^z\frac{\rho_T (\alpha)}{\alpha}d\alpha=\int_0^{\tilde z}\frac{\omega(\alpha)}{\alpha}d\alpha\leq\tilde z,$$
		and 
		\begin{align*}
			\int_z^\infty \frac{\rho_T (\alpha)}{\alpha^2}d\alpha&=\frac{C_0}{T}\int_{\tilde z}^{\infty}\frac{\omega (\alpha)}{\alpha^2}d\alpha\leq \frac{C_0}{T}\int_{\tilde z}^{\delta }\frac{1}{\alpha}d\alpha+\frac{C_0}{T}\int_{\tilde z}^{\delta }\frac{\omega (\alpha)}{\alpha^2}d\alpha\\
			&\leq \frac{C_0}{T}\left(\ln(\delta/\tilde z)+\frac{\omega (\delta)}{\delta }+\int_\delta^\infty\frac{\omega'(\eta)}{\eta}d\eta\right).
		\end{align*}
		Moreover, by concavity we have 
		\begin{align*}
			\int_{|\alpha|>z}\frac{\rho_T(|\alpha|+z)-\rho_T(z)}{|\alpha|^2}d\alpha\leq 2	\int_z^\infty \frac{\rho_T (\alpha)}{\alpha^2}d\alpha.
		\end{align*}
		Then we estimate the negative part. By concavity and monotonicity, we have $\rho_t ( z+\alpha)\leq \rho_t ( z)+\alpha\rho_t '( z)$ and $\rho_t  ( z-\alpha)\leq \rho_t ( z)-\alpha \rho_t '( z)+\frac{\alpha^2}{2}\rho_t ''( z)$. Hence 
		\begin{align*}
			-\int_{0}^{ z} \frac{\delta_{\alpha} \rho_T ( z)+\delta_{-\alpha} \rho_T ( z)}{\alpha^{2}} d \alpha\leq \frac{ z}{2}\rho_T ''( z)=\frac{C_0\tilde z}{2T}\omega''(\tilde z).
		\end{align*}
		Combining this with \eqref{final}, 
		we obtain
		\begin{align*}
			&\left.\frac{d}{dt}(f_{x}(x)-f_{x}(y))\right|_{t=T}\\
			&\leq\frac{4C_0(1+L)}{T}\omega'(\tilde z)\left(3\tilde z+\tilde z\left(\ln(\delta/\tilde z)+\int_\delta^\infty\frac{\omega'(\eta)}{\eta}d\eta\right)\right)+\frac{ C_0\varepsilon_0\tilde z}{4T(1+L^2)^2}\omega''(\tilde z)\\
			&\quad+\frac{4C_0(1+L)}{T}\omega(\tilde z)\left(\ln(\delta/\tilde z)+\frac{\omega (\delta)}{\delta }+\int_\delta^\infty\frac{\omega'(\eta)}{\eta}d\eta\right)+5(1+L)\sigma^{-1}\omega(\tilde z)\\
			&\leq 	\frac{8C_0(1+L)}{T}\tilde z\left(3+\ln(\delta/\tilde z)+\int_\delta^\infty\frac{\omega'(\eta)}{\eta}d\eta\right)+\frac{ C_0\varepsilon_0\tilde z}{4T(1+L^2)^2}\omega''(\tilde z)+5(1+L)\sigma^{-1}\omega(\tilde z).
		\end{align*}
		{\bf Case 2: $\tilde z> \delta$}\\
		We have 
		$$\int_{0}^{ z} \frac{\rho_t (\alpha)}{\alpha} d \alpha \leq \delta+\omega ( \tilde z) \ln \frac{ \tilde z}{\delta}.$$
		Moreover,
		\begin{align*}
			\int_{|\alpha|>z}(\rho_T(|\alpha|+z)-\rho_T(z))\frac{d\alpha}{|\alpha|^2}\leq \frac{2C_0}{T}\left(\frac{\omega(2\tilde z)-\omega(\tilde z)}{\tilde z}+\int_z^\infty\frac{\rho_T'(\alpha+z)}{\alpha}d\alpha\right).
		\end{align*}
		For the negative part, we have 
		\begin{align*}
			\int_{ z}^{\infty} \frac{\rho_T (\alpha+ z)-\rho_T (\alpha)-\rho_T ( z)}{\alpha^{2}} d \alpha&\leq \-\frac{\rho_T (2 z)-2\rho_T ( z)}{ z}+\int_ z^\infty\frac{\rho_T '(h+ z)-\rho_T '(h)}{h}dh\\
			&\leq -\frac{1}{2}\frac{\rho_T ( z)}{ z}=-\frac{1}{2}\frac{C_0}{T}\frac{\omega(\tilde z)}{\tilde z}.
		\end{align*}
		Then \eqref{final} leads to 
		\begin{equation*}
			\begin{aligned}
				&\left.\frac{d}{dt}(f_{x}(x)-f_{x}(y))\right|_{t=T}\\
				&\leq \frac{4C_0(1+L)}{T}\omega'(\tilde z)\left(\delta+\omega ( \tilde z)( \ln \frac{ \tilde z}{\delta} +2)+\tilde z \int_{\tilde z}^\infty\frac{\omega'(\eta)}{\eta}d\eta
				\right)\\
				&\quad\ -\frac{C_0\varepsilon_0}{4(1+L^2)^2T}\frac{\omega(\tilde z)}{\tilde z}+\frac{4C_0(1+L)}{T}\left(\frac{\omega(2\tilde z)-\omega(\tilde z)}{\tilde z}+\int_z^\infty\frac{\rho_T'(\alpha+z)}{\alpha}d\alpha\right)\\
				&+5(1+L)\sigma^{-1}\omega(\tilde z).
			\end{aligned}
		\end{equation*}
		~~\vspace{0.3cm}\\
		The RHS of \eqref{mainest} reads
		\begin{align*}
			\left.\left(\frac{d}{dt}\left(\rho_t(z)\right)+\tilde C \rho_t(z)\right)\right|_{t=T}=-\frac{C_0z}{T^2}\omega'(\tilde z)+\tilde C \omega(\tilde z)=-\frac{\tilde z}{T}\omega'(\tilde z)+\tilde C \omega(\tilde z).
		\end{align*}
		To prove Lemma \ref{lemmain}, it suffices to take $\tilde C= 10(1+L)\sigma^{-1}$, and find $\omega$ such that 
		\begin{align}\label{smal}
			&{10(1+L)\tilde z}\left(3+C_0^{-1}+\ln(\delta/\tilde z)+\int_\delta^\infty\frac{\omega'(\eta)}{\eta}d\eta\right)+\frac{\varepsilon_0\tilde z}{4(1+L^2)^2}\omega''(\tilde z)<0,\quad\quad \forall\ \tilde z\leq \delta,
		\end{align}
		and 
		\begin{equation}\label{lar}
			\begin{aligned}
				&4(1+L)\omega'(\tilde z)\left(\delta+\omega ( \tilde z)( \ln \frac{ \tilde z}{\delta} +2)+\tilde z \left(C_0^{-1}+\int_{\tilde z}^\infty\frac{\omega'(\eta)}{\eta}d\eta\right)\right)\\
				&\quad\quad-\frac{\varepsilon_0}{4(1+L^2)^2}\frac{\omega(\tilde z)}{\tilde z}+4(1+L)\left(\frac{\omega(2\tilde z)-\omega(\tilde z)}{\tilde z}+\int_z^\infty\frac{\rho_T'(\alpha+z)}{\alpha}d\alpha\right)<0,\quad\quad\forall\tilde z>\delta.
			\end{aligned}
		\end{equation}
		To ensure \begin{align}\label{e1}
			10(1+L)\tilde z\left(3+C_0^{-1}+\ln \frac{\delta}{\tilde z}\right)+\frac{\varepsilon_0\tilde z}{4(1+L^2)^2}\omega''(\tilde z)< 0,\ \ \ \forall \tilde z\leq \delta,
		\end{align} 
		we let 
		\begin{align*}
			\omega''(\tilde z)=- \tilde z^{-\frac{1}{2}}.
		\end{align*}
		And take $\delta$ small enough such that 
		\begin{align*}
			\tilde z^{\frac{1}{2}}\left(4+\ln \frac{\delta}{\tilde z}\right)\leq \frac{\varepsilon_0}{100(1+L)^5},\ \ \ \ \forall \tilde z\leq \delta,
		\end{align*}
		which leads to \eqref{e1}.
		This motivates us to define 
		\begin{align*}
			\omega(\alpha)=\alpha-\frac{4}{3}\alpha^\frac{3}{2},\ \ \ \alpha\leq \delta.
		\end{align*}
		On the other hand, the terms $ 4(1+L )\omega'(\tilde z)(\omega ( \tilde z) \ln \frac{ \tilde z}{\delta}+\tilde z\int_ {\tilde z}^\infty\frac{\omega '(\eta)}{\eta}d\eta) $ in \eqref{lar} suggest us to design 
		\begin{align*}
			\omega'( \tilde z)\leq \frac{\gamma}{ \tilde z(\ln (  \tilde z/\delta)+10)}\ \ \ \text{for}\   \tilde z\geq\delta,
		\end{align*}
		where $0<\gamma\ll 1$ will be specified later. 
		Then we obtain 
		\begin{align*}
			\int_\delta^\infty\frac{\omega'(\eta)}{\eta}d\eta\leq \gamma\delta^{-1}\leq \frac{\varepsilon_0  \delta^{-1/2}}{100(1+L)^5}\leq  -\frac{\varepsilon_0  }{100(1+L)^5}\omega''(\tilde z), \ \ \forall \tilde z\leq \delta,
		\end{align*}
		by taking $\gamma$ small enough. Combining this with \eqref{e1} we obtain \eqref{smal}. 
		
		Next we consider \eqref{lar}. 
		We fix $C_0\geq\frac{\omega^{-1}(2L)}{2L}$. By $\omega(\tilde z)\leq 2L$, we have $\tilde z\leq \omega^{-1}(2L)$. By concavity we have 
		\begin{align*}
			\frac{\tilde z}{	\omega(\tilde z)}\leq \frac{\omega^{-1}(2L)}{2L}\leq C_0,
		\end{align*}
		which leads to $\frac{\tilde z}{C_0}\leq \omega(\tilde z)$. Then
		\begin{equation*}
			\begin{aligned}
				4(1+L)\omega'(\tilde z)&\left(\delta+\omega ( \tilde z)( \ln \frac{ \tilde z}{\delta} +2)+\tilde z \left(C_0^{-1}+\int_{\tilde z}^\infty\frac{\omega'(\eta)}{\eta}d\eta\right)\right)\\
				&\leq4(1+L)\omega'(\tilde z)\omega(\tilde z)( \ln \frac{ \tilde z}{\delta} +4)+4(1+L)\omega'(\tilde z)(\delta+\gamma)\\
				&\leq\frac{ 4(1+L)\gamma( \ln \frac{ \tilde z}{\delta}+4)}{ \ln (  \tilde z/\delta)+10}\frac{\omega(\tilde z)}{\tilde z}+\frac{4(1+L)\gamma}{ \tilde z(\ln (  \tilde z/\delta)+10)}.
			\end{aligned}
		\end{equation*}
		We take  $\gamma $ small enough such that 
		$$
		\frac{4(1+L)\gamma( \ln \frac{ \tilde z}{\delta}+4)}{ (\ln (  \tilde z/\delta)+10)}\leq \frac{\varepsilon_0}{100(1+L^2)^2},\quad\quad 
		\frac{4(1+L)\gamma}{ (\ln (  \tilde z/\delta)+10)}\leq\frac{\varepsilon_0}{100(1+L^2)^2} \omega(\tilde z),
		$$
		for $\tilde z>\delta$. Moreover, we have 
		\begin{align*}
			\frac{\omega(2\tilde z)-\omega(\tilde z)}{\tilde z}+\int_z^\infty\frac{\rho_T'(\alpha+z)}{\alpha}d\alpha\leq \frac{\gamma \ln 2}{10\tilde z}+\frac{\gamma}{\tilde z}\leq \frac{\lambda \omega(\delta)}{100(L+1)\tilde z}\leq \frac{\lambda \omega(\tilde z)}{100(L+1)\tilde z},
		\end{align*}
		as long as $\delta, \gamma$ are taken sufficiently small.
		We obtain \eqref{lar}. Then we complete the proof by setting 
		$$
		\rho(x)=\omega(C_0x).
		$$
	\end{proof}
	\subsection{ Uniqueness}
	\begin{theorem}
		Let $f\in L^\infty ([0,T]; W^{1,\infty})$ be a classical solution to the Muskat equation with initial data $f(0,x)=f_0(x)$. If there exists a modulus of
		continuity $\tilde \rho$ such that 
		\begin{align*}
			f_{x}(t, x)-f_{x}(t, y) \leq \tilde{\rho}(|x-y|), \quad \forall 0 \leq t \leq T, x \neq y \in \mathbb{R}.
		\end{align*}
		Then the solution $f$ is unique.
	\end{theorem}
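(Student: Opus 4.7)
The plan is to take another classical solution $g\in L^\infty([0,T];W^{1,\infty})$ of \eqref{E1} with the same initial data $f_0$, set $w=f-g$, and prove $w\equiv 0$ via a maximum-principle argument in the spirit of Lemma \ref{strict}. Subtracting the two equations yields
\begin{equation*}
\partial_t w(x)=\text{P.V.}\!\!\int\frac{\alpha\,\partial_x\Delta_\alpha w(x)}{\langle\Delta_\alpha f(x)\rangle^{2}}\frac{d\alpha}{|\alpha|}+\text{P.V.}\!\!\int\alpha\,\partial_x\Delta_\alpha g(x)\,\frac{\langle\Delta_\alpha g\rangle^{2}-\langle\Delta_\alpha f\rangle^{2}}{\langle\Delta_\alpha f\rangle^{2}\langle\Delta_\alpha g\rangle^{2}}\frac{d\alpha}{|\alpha|},
\end{equation*}
which I read as a linear-in-$w$ nonlocal dissipative operator $L_f[w]$ plus a perturbation $N(f,g)[w]$; using the algebraic identity $\langle A\rangle^{2}-\langle B\rangle^{2}=(A-B)(A+B)$ together with $\Delta_\alpha f-\Delta_\alpha g=\Delta_\alpha w$, the perturbation carries an explicit factor $\Delta_\alpha w$.

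Next, set $M(t)=\|w(t)\|_{L^\infty}$ and, after a standard truncation at infinity (permissible because $w(0)=0$), assume $M(t)$ is attained at some $x_t$. Then $w_x(x_t)=0$, which gives the crucial identity $f_x(x_t)=g_x(x_t)$; together with the modulus-of-continuity assumption on $f_x$, this makes $\langle\Delta_\alpha f(x_t)\rangle$ and $\langle\Delta_\alpha g(x_t)\rangle$ comparable for all $\alpha$. Writing $w_x(x_t-\alpha)=-\partial_\alpha w(x_t-\alpha)$ and integrating by parts in $\alpha$ (handling the principal value near the origin along the lines of the derivation of $K$ in Section \ref{secmain}), one identifies $L_f[w](x_t)$ as a fractional-Laplacian-type operator evaluated at a global maximum of $w$, hence yielding a non-positive contribution controlled from above by a dissipative quadratic form. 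For the perturbation, the bounds $|\Delta_\alpha w(x_t)|\leq\min(2M/|\alpha|,\|w_x\|_{L^\infty})$ and $|\partial_x\Delta_\alpha g|\leq 2\|g_x\|_{L^\infty}/|\alpha|$, combined with the modulus $\tilde\rho$ of $f_x$, give
\begin{equation*}
|N(f,g)[w](x_t)|\leq C\,M\bigl(1+|\log M|\bigr),\qquad C=C(\|f\|_{L^\infty_TW^{1,\infty}},\|g\|_{L^\infty_TW^{1,\infty}},\tilde\rho).
\end{equation*}

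Combining the two estimates yields $M'(t)\leq CM(t)(1+|\log M(t)|)$ for a.e.\ $t\in[0,T]$, justified by a Danskin-type argument parallel to the one used in Lemma \ref{strict}. Since $M(0)=0$ and $r\mapsto r(1+|\log r|)$ satisfies Osgood's condition $\int_{0^+}dr/[r(1+|\log r|)]=\infty$, Osgood's uniqueness lemma forces $M\equiv 0$ on $[0,T]$, so $f=g$. The main obstacle I anticipate is the rigorous extraction of the non-positive contribution from $L_f[w](x_t)$: the integrand is only conditionally convergent, so the principal value must be tracked carefully, and the logarithmic bound on the perturbation requires comparing $\langle\Delta_\alpha f\rangle^{-2}$ and $\langle\Delta_\alpha g\rangle^{-2}$ uniformly in $\alpha$ by combining the identity $f_x(x_t)=g_x(x_t)$ with the modulus $\tilde\rho$.
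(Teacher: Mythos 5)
Your overall framework --- evaluating at a maximum point $x_t$ of $w=f-g$, using $w_x(x_t)=0$, and splitting the difference of the two equations into a coercive linear part in $w$ plus a perturbation carrying an explicit factor $\Delta_\alpha w$ --- is exactly the paper's. The gap is in the quantitative closure. Your claimed bound $|N(f,g)[w](x_t)|\le CM(1+|\log M|)$ does not follow from the estimates you invoke, and I do not see how it could hold under the stated hypotheses. After taking absolute values the perturbation is controlled by $\int|\delta_\alpha g_x(x_t)|\,|\delta_\alpha w(x_t)|\,\alpha^{-2}\,d\alpha$. Near $\alpha=0$ the best available bound is $|\delta_\alpha w|\le \|w_x\|_{L^\infty}|\alpha|$, where $\|w_x\|_{L^\infty}$ is merely bounded (it is \emph{not} small when $M$ is small), so the integrand is of order $\|w_x\|_{L^\infty}/|\alpha|$, which is not integrable at the origin; the bound $\min(2M/|\alpha|,\|w_x\|_{L^\infty})$ cannot repair this because the divergence sits at $|\alpha|\to 0$, below any crossover scale. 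A general modulus of continuity $\tilde\rho$ does not help either, since $\tilde\rho$ need not be Dini and $\int_{0^+}\tilde\rho(\alpha)\,d\alpha/\alpha$ may diverge, and there is no sign cancellation left once absolute values are taken. So the Osgood step has nothing valid to act on.

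The paper closes the argument precisely by \emph{not} discarding the dissipation. At the maximum point it keeps the coercive term, which is bounded below by $(1+\|f_x\|_{L^\infty}^2)^{-1}\int_{|\alpha|\le\epsilon}\delta_\alpha w(x_t)\,\alpha^{-2}\,d\alpha\ge 0$, bounds the $|\alpha|\le\epsilon$ portion of the perturbation by $\tilde\rho(\epsilon)\int_{|\alpha|\le\epsilon}\delta_\alpha w(x_t)\,\alpha^{-2}\,d\alpha$ --- this is where the modulus hypothesis actually enters: it makes the \emph{coefficient} small, not the integral convergent --- and then chooses $\epsilon$ so that $\tilde\rho(\epsilon)\le(1+\|f_x\|_{L^\infty}^2)^{-1}$, so this singular portion is absorbed by the dissipation. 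The remaining $|\alpha|>\epsilon$ portion is genuinely $O(\epsilon^{-1}M)$ with $\epsilon$ fixed independently of $M$, so plain Gronwall finishes; no Osgood lemma is needed. If you restructure your estimate this way --- absorbing the singular range into the elliptic term rather than bounding it in absolute value --- your argument goes through.
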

	\begin{proof} The idea of the proof follows \cite{KC1}. 
		Let $f_1,f_2$ be two solutions with the same initial data. Denote $g=f_1-f_2$.  We have 
		\begin{align*}
			&\partial_t g-\int_{\mathbb{R}}\frac{E_\alpha g}{\langle\Delta_\alpha f_1\rangle^2}\frac{d\alpha}{\alpha}=\int_{\mathbb{R}}E_\alpha f_2\left(\frac{1}{\langle\Delta_\alpha f_2\rangle^2}-\frac{1}{\langle\Delta_\alpha f_1\rangle^2}\right)\frac{d\alpha}{\alpha}\\
			&\quad\quad \leq \int_{\mathbb{R}}|E_\alpha f_2||\Delta_\alpha g|\frac{d\alpha}{|\alpha|}.
		\end{align*}
		Take $x_t$ such that $g(t,x_t)=\sup_xg(t,x)$. Then 
		\begin{align*}
			\frac{d}{dt}g(x_t)+\int_{\mathbb{R}}\frac{\delta_\alpha g(x_t)}{\langle\Delta_\alpha f_1(x_t)\rangle^2}\frac{d\alpha}{\alpha^2}&\leq \int_{\mathbb{R}}|E_\alpha f_2(x_t)||\Delta_\alpha g(x_t)|\frac{d\alpha}{|\alpha|}\\
			&\leq\tilde \rho(\epsilon)\int_{|\alpha|\leq \epsilon}\delta_\alpha g(x_t)\frac{d\alpha}{\alpha^2}+\epsilon^{-1}\|f_2\|_{\dot W^{1,\infty}}\|g\|_{L^\infty}.
		\end{align*}
		Take $\epsilon$ small enough such that 
		\begin{align*}
			\tilde \rho(\epsilon)\leq \frac{1}{1+\|f_1\|_{\dot W^{1,\infty}}^2}.
		\end{align*}
		Then 
		\begin{align*}
			\frac{d}{dt}g(x_t)&\leq\epsilon^{-1}\|f_2\|_{\dot W^{1,\infty}}\|g\|_{L^\infty}.
		\end{align*}
		Integrate in time we obtain 
		\begin{align*}
			\sup_xg(t,x)\leq \epsilon^{-1}\|f_0\|_{\dot W^{1,\infty}}\int_0^t\|g(t)\|_{L^\infty}dt.
		\end{align*}
		A similar discussion holds if we replace $\sup g(x)$ by $\sup (-g(x))$. By Gronwall's inequality we have
		\begin{align*}
			\sup_{t\in[0,T]}\|g(t)\|_{L^\infty}\leq C\|g_0\|_{L^\infty}.
		\end{align*}
		This completes the proof.
	\end{proof}
	\section{Proof of Theorem \ref{thmhd}}
	We consider the Muskat equation in $d+1$-dimension.
	\begin{align}\label{hde}
		\partial_{t} f(t, x)=\int_{\mathbb{R}^d} \frac{\alpha \cdot \nabla f(t, x)-\delta_{\alpha} f(t, x)}{\left\langle\Delta_{\alpha} f(t, x)\right\rangle^{d+1}} \frac{d \alpha}{|\alpha|^{d+1}},
	\end{align}
	where $\Delta_\alpha f(t,x)=\frac{\delta_\alpha f(t,x)}{|\alpha|}$.
	Consider the initial data $f_0$ satisfying
	\begin{align*}
		\lim_{\varepsilon\to 0}\|\nabla f_0-\nabla f_0\ast \phi_\varepsilon\|_{L^\infty}\leq c_d,
	\end{align*}
	where $\phi_\varepsilon$ is the standard mollifier, and $c_d$ is a small constant depending only on dimension $d$. Then there exists $\eta>0$ such that 
	\begin{align}\label{inico}
		\|\nabla f_0-\nabla f_0\ast \phi_{\eta}\|_{L^\infty}\leq \frac{5}{4}c_d.
	\end{align} 
	
	Suppose for any $t\in[0,T]$, 
	\begin{align}
		&\|\nabla f(t,\cdot)-\nabla f_0\ast \phi_{\eta}\|_{L^\infty}\leq 2c_d.\label{hdc1}\\
		&|\nabla f(t,x)-\nabla f(t,y)|\leq \rho\left(\frac{|x-y|}{t}\right)\ \ \ \forall x\neq y\in\mathbb{R}^d,\label{hdcrho}
	\end{align}
	for some concave modulus function $\rho$ satisfying $\rho(\alpha)\leq C_0|\alpha|$. In the following, we will not distinguish $\rho(\alpha)$ and $\rho(|\alpha|)$.
	\begin{lemma}\label{lemhd}
		There exists $\tilde T=\tilde T(\|\nabla f_0\|_{L^\infty},d,\eta)>0$ such that 
		\begin{align*}
			\sup_{t\in[0,\min\{T,\tilde T\}]}\|\nabla f(t,\cdot)-\nabla f_0\ast \phi_{\eta}\|_{L^\infty}\leq \frac{3}{2}c_d.
		\end{align*}
	\end{lemma}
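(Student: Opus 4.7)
The plan is to mirror the strategy of Lemma \ref{lipest}, replacing $\|f_x\|_{L^\infty}$ by the quantity $\|\nabla f - \nabla f_0 \ast \phi_\eta\|_{L^\infty}$ and replacing the use of $\beta_\sigma$-ellipticity by the ellipticity supplied by \eqref{hdc1} together with the smoothness of $\nabla f_0 \ast \phi_\eta$ on the scale $\eta$. Set $h(t,x) = \nabla f(t,x) - \nabla f_0 \ast \phi_\eta(x)$; since the second term is independent of $t$, one has $\partial_t h = \partial_t \nabla f$. For any unit vector $e \in \mathbb{R}^d$, let $x_t$ be a point where $e \cdot h(t,\cdot)$ achieves its supremum (existence follows from $\nabla f_0 \ast \phi_\eta, \nabla f$ being bounded and $\nabla f_0 \ast \phi_\eta \to 0$ at infinity after the standard truncation/mollification). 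At $x_t$ the first-order extremal conditions give $e \cdot \nabla^2 f(x_t) = e\cdot\nabla^2(f_0 \ast \phi_\eta)(x_t)$ and, crucially, for every $\alpha\neq 0$,
\begin{equation*}
\delta_\alpha(e\cdot\nabla f)(x_t) \geq \delta_\alpha\bigl(e\cdot\nabla f_0 \ast \phi_\eta\bigr)(x_t),
\end{equation*}
so that $\delta_\alpha(e\cdot\nabla f)(x_t)$ is bounded below by $-C_d\eta^{-1}\|\nabla f_0\|_{L^\infty}\min(|\alpha|,\eta)$.

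First, differentiate \eqref{hde} in the direction $e$ to obtain the higher-dimensional analogue of \eqref{eqde}, of the schematic form
\begin{equation*}
\partial_t(e\cdot\nabla f)(x) = -\bigl(e\cdot\nabla^2 f(x)\bigr)\cdot V(x) - \int_{\mathbb{R}^d}\delta_\alpha(e\cdot\nabla f)(x)\,K(x,\alpha)\,d\alpha,
\end{equation*}
where $V$ is a bounded vector field and $K = K_1 + K_2$ is split at a scale $\sigma\leq\eta$ to be fixed. The main part $K_1$ (supported on $|\alpha|\leq\sigma$) has the shape $\frac{c\,(\text{positive numerator})}{\langle\Delta_\alpha f\rangle^{d+3}|\alpha|^{d+1}}$; its positivity is guaranteed provided $|\Delta_\alpha f(x)\cdot\nabla f(x) + 1|$ stays bounded away from zero uniformly for $|\alpha|\leq\sigma$. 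Using \eqref{hdc1} and the Lipschitz bound $\|\nabla^2(f_0\ast\phi_\eta)\|_{L^\infty}\leq C_d\eta^{-1}\|\nabla f_0\|_{L^\infty}$, for $\sigma$ chosen small enough in terms of $\eta$ and $\|\nabla f_0\|_{L^\infty}$ one has $|\Delta_\alpha f(x) - \nabla f(x)\cdot\alpha/|\alpha||\leq 3c_d$, and since $2c_d < \tfrac{1}{2(d+1)}$ a direct algebraic computation (exactly parallel to the positivity of $k$ in the 1D proof) yields $K_1 \geq 0$.

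Next, at the extremal point $x_t$ the transport term $V\cdot(e\cdot\nabla^2 f)(x_t)$ is bounded by $C\eta^{-1}\|\nabla f_0\|_{L^\infty}$ because $e\cdot\nabla^2 f(x_t)=e\cdot\nabla^2(f_0\ast\phi_\eta)(x_t)$. The elliptic contribution $-\int \delta_\alpha(e\cdot\nabla f)(x_t)K_1(x_t,\alpha)\,d\alpha$ is bounded above by its value when $\delta_\alpha(e\cdot\nabla f)$ is replaced by the lower bound $\delta_\alpha(e\cdot\nabla f_0\ast\phi_\eta)$, which yields a bound of order $\eta^{-1}\|\nabla f_0\|_{L^\infty}\int_{|\alpha|\leq\sigma}|\alpha|K_1\,d\alpha \leq C(\eta,\|\nabla f_0\|_{L^\infty})$. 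The remainder $\int\delta_\alpha(e\cdot\nabla f)(x_t)K_2(x_t,\alpha)d\alpha$ is estimated exactly as in \eqref{K2Linf} by $C(1+\|\nabla f\|_{L^\infty})^{d+1}\sigma^{-1}$. Applying the symmetric argument with $-e$ and taking the supremum over unit vectors $e$ gives
\begin{equation*}
\frac{d}{dt}\|h(t,\cdot)\|_{L^\infty} \leq C_{d}\bigl(\eta,\|\nabla f_0\|_{L^\infty}\bigr),
\end{equation*}
which after integration from $0$ yields $\|h(t,\cdot)\|_{L^\infty}\leq \tfrac{5}{4}c_d + C_d t \leq \tfrac{3}{2}c_d$ for all $t\leq \tilde T$ with $\tilde T = c_d/(4 C_d)$.

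The main obstacle will be step two: carefully checking that under only the $L^\infty$ smallness hypothesis \eqref{hdc1} (rather than smallness of $\|\nabla f\|_{L^\infty}$ itself), the $d+1$-dimensional Muskat kernel $K_1$ really is nonnegative on scale $\sigma$. This requires exploiting the fact that $\nabla f_0 \ast \phi_\eta$ is a fixed Lipschitz function so that $|\Delta_\alpha f(x) - \nabla f(x)\cdot\alpha/|\alpha||$ can be made arbitrarily close to $\|h\|_{L^\infty} \leq 2c_d$ by shrinking $\sigma$, together with the numerical condition $c_d < \tfrac{1}{4(d+1)}$ which is exactly what is needed to keep $(\alpha\cdot\nabla f)\Delta_\alpha f + |\alpha|$ of definite sign in the Muskat numerator.
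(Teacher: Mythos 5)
Your overall architecture (directional extremal point, ellipticity supplied by \eqref{hdc1} together with the smoothness of $f_0\ast\phi_\eta$ at scale $\eta$, kernel split at a scale $\sigma\le\eta$, and the role of $c_d<\frac{1}{4(d+1)}$ in keeping the main coefficient positive, cf.\ \eqref{ellid}) is the same as the paper's. The genuine gap is in your decision to run the maximum principle on $e\cdot h$ with $h=\nabla f-\nabla f_0\ast\phi_\eta$ rather than on $f_e=e\cdot\nabla f$ itself. At a maximum of $e\cdot h$ you only get $\delta_\alpha(e\cdot\nabla f)(x_t)\ge\delta_\alpha(e\cdot\nabla f_0\ast\phi_\eta)(x_t)$, and you bound the resulting error in the elliptic term by $\eta^{-1}\|\nabla f_0\|_{L^\infty}\int_{|\alpha|\le\sigma}|\alpha|\,K_1\,d\alpha$, claiming this is finite. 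It is not: the operator is of order one, so by your own description $K_1(x,\alpha)\sim|\alpha|^{-(d+1)}$ near the origin, and $\int_{|\alpha|\le\sigma}|\alpha|\cdot|\alpha|^{-(d+1)}\,d\alpha$ diverges logarithmically at $\alpha=0$ (the same objection already applies in one dimension, which is exactly why Lemma \ref{lipest} maximizes $f_x$ itself so that $\delta_\alpha f_x(x_t)\ge0$ and the elliptic term can simply be dropped). To rescue your variant you would have to isolate the linear part $\alpha\cdot\nabla(e\cdot\nabla f_0\ast\phi_\eta)(x_t)$ and exploit its cancellation against the symmetrized kernel, using \eqref{hdcrho} to control $K_1(x,\alpha)-K_1(x,-\alpha)$; this is not automatic, you do not do it, and it yields a time-dependent logarithmic factor rather than the constant $C(\eta,\|\nabla f_0\|_{L^\infty})$ you assert. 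The same symmetrization issue undercuts your transport term: the ``bounded vector field'' $V$ is only a principal value of $\int\hat\alpha\,\langle\Delta_\alpha f\rangle^{-(d+1)}|\alpha|^{-d}\,d\alpha$, not an absolutely convergent integral, and you need it because in your setup $e\cdot\nabla^2 f(x_t)=e\cdot\nabla^2(f_0\ast\phi_\eta)(x_t)\neq0$; the paper never has to make sense of $V$ since $\nabla f_e(x_{t,e})=0$ at its extremal point.

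For comparison, the paper's route avoids all of this: it reduces the claim via \eqref{inico} to controlling the growth of $\sup_x f_e$ and $\sup_x(-f_e)$, maximizes $f_e$ directly so that both the transport term vanishes and $\Delta_\alpha f_e\ge0$ holds exactly, splits $E_\alpha f=E_\alpha f_1+E_\alpha f_2$ with $f_1=f-f_0\ast\phi_\eta$, absorbs the $|\alpha|\le\lambda$ part of the $E_\alpha f_2$ contribution into the (signed) elliptic term using $|E_\alpha f_2|\lesssim\eta^{-1}|\alpha|\,\|\nabla f_0\|_{L^\infty}$ together with $1-(d+1)(4c_d+\lambda)\ge0$, and bounds the $|\alpha|\ge\lambda$ tail crudely by $\lambda^{-1}\|\nabla f_0\|_{L^\infty}(M_e+m_e)$. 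You should restructure your argument along these lines, or else supply the missing cancellation estimates for both the elliptic error and the principal-value coefficient.
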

	\begin{proof}
		By \eqref{inico},	it suffices to find $\tilde T$ such that 
		\begin{align}\label{hdlip}
			\sup_{t\in[0,\min\{T,\tilde T\}]}\|\nabla f\|_{L^\infty}\leq \|\nabla f_0\|_{L^\infty}+\frac{c_d}{10}.
		\end{align}
		We follow the estimates in \cite{KC1}. Denote $f_1(t,x)=f(t,x)-f_0\ast\phi_\eta(x)$, $f_2(x)=f_0\ast\phi_\eta(x)$.
		For any $e\in \mathbb{S}^{d-1}$, denote $f_e=e\cdot \nabla f$.	Take directional derivative $\partial_e=e\cdot \nabla $ in \eqref{hde}, and denote $E_\alpha f(x)=\Delta_\alpha f(x)-\partial_{\hat{\alpha}}f(x)$ with $\hat{\alpha}=\frac{\alpha}{|\alpha|}$, we obtain
		\begin{align*}
			\partial_tf_e=&\int_{\mathbb{R}^d}\frac{\hat\alpha\cdot\nabla f_e}{\left\langle \Delta_\alpha f\right\rangle^{d+1}}\frac{d\alpha}{|\alpha|^d}-\int_{\mathbb{R}^d}\frac{\Delta_\alpha f_e}{\left\langle \Delta_\alpha f\right\rangle^{d+1}}\left(1+(d+1)\frac{E_\alpha f \Delta_\alpha f}{\langle \Delta_\alpha f\rangle^2}\right)\frac{d\alpha}{|\alpha|^d}.
		\end{align*}
		Let $x_{t,e}$, $\tilde x_{t,e}$ be such that $ f_e(t,x_{t,e})= \sup_xf_e(t,x)$ and  $f_e(t,\tilde x_{t,e})=\sup_x (- f_e(t,x))$.
		Then $\nabla f_e(t,x_{t,e})=0$ and $\Delta_\alpha f_e (t,x_{t,e})\geq 0$.  Denote $M_e(t)=f_e (t,x_{t,e})$ and $m_e(t)=f_e (t,\tilde x_{t,e})$. By \eqref{hdc1} we have 
		\begin{align}\label{ddd}
			\sup_{t\in[0,T]}(M_e(t)+m_e(t))\leq 2(\|\nabla f_0\|_{L^\infty}+2c_d).
		\end{align}
		Note that 
		\begin{align*}
			\int_{\mathbb{R}^d}&\frac{\Delta_\alpha f_e}{\left\langle \Delta_\alpha f\right\rangle^{d+1}}\frac{E_\alpha f_2 \Delta_\alpha f}{\langle \Delta_\alpha f\rangle^2}\frac{d\alpha}{|\alpha|^d}\\
			&\leq\lambda\int _{|\alpha|\leq \lambda}\frac{\Delta_\alpha f_e}{\left\langle \Delta_\alpha f\right\rangle^{d+1}}\frac{d\alpha}{|\alpha|^d}+\eta^{-2}\|\nabla f_0\|_{L^\infty}^2\int_{|\alpha|\leq \lambda}\frac{d\alpha}{|\alpha|^{d-1}}\\
			&\quad\quad+\|\nabla f_0\|_{L^\infty}(M_e+m_e)\int_{|\alpha|\geq \lambda}\frac{d\alpha}{|\alpha|^{d+1}}.
		\end{align*}
		We can take $\lambda$ small enough such that 
		\begin{align*}
			1+(d+1)\frac{E_\alpha f_1 \Delta_\alpha f}{\langle \Delta_\alpha f\rangle^2}-(d+1)\lambda\geq 	1-(d+1)(4c_d+\lambda) \geq 0.
		\end{align*}
		Then we obtain for any $t\in[0,T]$
		\begin{align*}
			\partial_t M_e&\leq (d+1) \eta^{-2}\lambda \|\nabla f_0\|_{L^\infty}^2+\lambda^{-1}\|\nabla f_0\|_{L^\infty}(M_e+m_e)\\
			&\overset{\eqref{ddd}}\leq (d+1) \eta^{-2}\lambda \|\nabla f_0\|_{L^\infty}^2+2\lambda^{-1}\|\nabla f_0\|_{L^\infty}
			(\|\nabla f_0\|_{L^\infty}+2c_d).
		\end{align*}
		Integrate the above estimate in time and take $\tilde T$ small enough, we obtain 
		\begin{align*}
			\sup_{t\in[0,\min\{\tilde T,T\}]} M_e(t)\leq M_e(0)+\frac{1}{10}c_d.
		\end{align*}
		We can estimate $m_e$ similarly. The above holds for any direction $e\in\mathbb{S}^{d-1}$. Then we obtain \eqref{hdlip}, which completes the proof.
	\end{proof}
	\subsection{ Modulus Estimates}
	Following Lemma \ref{strict}, if $\nabla f$ was to lose its modulus after time $T$, there must exist $x\neq y\in\mathbb{R}^d$ such that 
	\begin{align*}
		|\nabla f(T,x)-\nabla f(T,y)|= \rho\left(\frac{|x-y|}{T}\right).
	\end{align*}
	Hence there exists a direction $e\in\mathbb{S}^{d-1}$ such that 
	\begin{align*}
		f_e(T,x)-f_e(T,y)= \rho\left(\frac{|x-y|}{T}\right).
	\end{align*}
	
	Analogously as in Lemma \ref{lemmain}, we have 
	\begin{lemma}\label{lemmainhd}
		There exists a function $\rho$ such that if \eqref{hdc1}-\eqref{hdcrho} hold, and 
		\begin{align}\label{coeq1}
			f_e(T,x)-f_e(T,y)=\rho\left(\frac{|x-y|}{T}\right)	
		\end{align}
		for some $x\neq y\in\mathbb{R}^d$, and $e\in\mathbb{S}^{d-1}$, then there exists a constant $\tilde C$ independent in $T$ such that
		\begin{align*}
			\left.\frac{d}{dt}(f_e(t,x)-f_e(t,y))\right|_{t=T}<\left.\frac{d}{dt}\left(\rho\left(\frac{|x-y|}{t}\right)\right)\right|_{t=T}+\tilde C\rho\left(\frac{|x-y|}{T}\right).
		\end{align*}
	\end{lemma}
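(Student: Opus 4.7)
The plan is to mimic the proof of Lemma \ref{lemmain}, replacing the scalar derivative $f_x$ by the directional derivative $f_e$ and keeping track of the extra higher-dimensional geometry. I would start from the equation for $\partial_t f_e$ already derived in Lemma \ref{lemhd},
\begin{align*}
\partial_t f_e=\int_{\mathbb{R}^d}\frac{\hat\alpha\cdot\nabla f_e}{\langle \Delta_\alpha f\rangle^{d+1}}\frac{d\alpha}{|\alpha|^d}-\int_{\mathbb{R}^d}\frac{\Delta_\alpha f_e}{\langle \Delta_\alpha f\rangle^{d+1}}\left(1+(d+1)\frac{E_\alpha f\,\Delta_\alpha f}{\langle \Delta_\alpha f\rangle^2}\right)\frac{d\alpha}{|\alpha|^d},
\end{align*}
and take the difference at $x$ and $y$. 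Setting $v=x-y$, $z=|v|$, $\hat v=v/z$, the touching condition \eqref{coeq1} together with the modulus bound \eqref{hdcrho} forces $\nabla f_e(T,x)=\nabla f_e(T,y)=\rho_T'(z)\hat v$, the direct analogue of \eqref{2de}. Applying \eqref{hdcrho} to the pairs $(x,x-\alpha)$ and $(y,y-\alpha)$ then yields the one-sided pointwise estimates $\delta_\alpha f_e(x)\geq\rho_T(z)-\rho_T(|v-\alpha|/T)$ and $\delta_\alpha f_e(y)\leq \rho_T(|v+\alpha|/T)-\rho_T(z)$, the higher-dimensional version of \eqref{del}, which control the signs of the dissipative terms.

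Next, I would split the kernel of the second integral as $K=K_1+K_2$ paralleling \eqref{defK}, with $K_1$ supported on $|\alpha|\leq \eta$ and $K_2$ supported on $|\alpha|>\eta$. The crucial new input is the ellipticity of $K_1$: assumption \eqref{hdc1} together with the fact that $\nabla f_0\ast\phi_\eta$ is $O(\eta^{-1}\|\nabla f_0\|_{L^\infty})$-Lipschitz gives
\[
|E_\alpha f(x)|\leq 2c_d+C\eta^{-1}\|\nabla f_0\|_{L^\infty}|\alpha|,\qquad |\alpha|\leq\eta,
\]
so after restricting to a still smaller inner scale one obtains
\[
1+(d+1)\frac{E_\alpha f\,\Delta_\alpha f}{\langle\Delta_\alpha f\rangle^2}\geq 1-4(d+1)c_d=:\varepsilon_1>0,
\]
which uses precisely $c_d<\frac{1}{4(d+1)}$. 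This replaces the ellipticity $\beta_\sigma(f_x)\leq 1-\varepsilon_0$ of the 2D setting. Granted this positivity, the decomposition $Q_1+Q_{2,1}+Q_{2,2}+Q_{2,3}+Q_{2,4}$ of Lemma \ref{lemmodulus} carries over essentially verbatim after passing to radial coordinates and replacing $1/\alpha^2$ by $1/|\alpha|^{d+1}$: the first integral $\int\hat\alpha\cdot\nabla f_e/\langle\Delta_\alpha f\rangle^{d+1}\,d\alpha/|\alpha|^d$ produces the $Q_1$ term via the analogues of \eqref{alph} and \eqref{xy}; the positivity of $K_1$ yields the crucial negative contribution controlling $\int_0^z(\delta_\alpha\rho_T(z)+\delta_{-\alpha}\rho_T(z))/|\alpha|^{d+1}\,d\alpha$; and $K_2$ produces bounded remainders of order $\eta^{-1}\rho_T(z)$ that are absorbed by $\tilde C\rho(z/T)$ with $\tilde C=C(d,\|\nabla f_0\|_{L^\infty},\eta)$.

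The modulus $\rho$ is then chosen exactly as at the end of the proof of Lemma \ref{lemmain}: on $[0,\delta]$ I set $\omega(\alpha)=\alpha-\frac{4}{3}\alpha^{3/2}$ so that $\omega''(\tilde z)=-\tilde z^{-1/2}$ dominates the logarithmic positive terms, and on $[\delta,\infty)$ I require $\omega'(\tilde z)\leq \gamma/(\tilde z(\ln(\tilde z/\delta)+10))$, choosing $\delta,\gamma$ small depending on $\varepsilon_1$, $d$, and $L:=\|\nabla f_0\|_{L^\infty}+2c_d$. The main obstacle I expect is the first term $\int\hat\alpha\cdot\nabla f_e/\langle\Delta_\alpha f\rangle^{d+1}\,d\alpha/|\alpha|^d$: in 1D the scalar identity $f_{xx}(x)=f_{xx}(y)$ feeds directly into the cancellation in $Q_1$, whereas in $\mathbb{R}^d$ one must exploit that $\hat\alpha$ is odd in $\alpha$ while $\langle\Delta_\alpha f\rangle^{d+1}$ is not, extracting the cancellation through angular integration to recover an estimate of the shape $\rho_T'(z)(\int_0^z\rho_T(h)/h\,dh+z\int_z^\infty\rho_T(h)/h^2\,dh)$. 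A secondary subtlety is that the cutoff scale $\eta$ is fixed by the initial mollification \eqref{inico} rather than being free as $\sigma$ was in 2D, so $\tilde C$ unavoidably depends on $\eta$; this is harmless since only local-in-time existence is sought.
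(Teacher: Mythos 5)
Your outline is correct and would yield the lemma, but it takes a genuinely different route at the one point where this lemma departs from the 2D case, so let me compare. You localize the ellipticity by cutting the kernel off in $\alpha$-space at a small scale $\lambda_0\sim \eta c_d/\|\nabla f_0\|_{L^\infty}$, using $|E_\alpha f|\le 4c_d+C\eta^{-1}\|\nabla f_0\|_{L^\infty}|\alpha|$ (note the constant is $4c_d$, not $2c_d$, since \eqref{hdc1} bounds $\|\nabla f-\nabla f_0\ast\phi_\eta\|_{L^\infty}$ by $2c_d$ and $E_\alpha$ involves two gradients; the thresholds still close). The paper instead splits the \emph{coefficient} rather than the integration domain: writing $f=f_1+f_2$ with $f_2=f_0\ast\phi_\eta$, it decomposes $R=R_1+R_2$ where $R_1$ carries $E_\alpha f_1$ and is elliptic for \emph{all} $\alpha$ by \eqref{hdc1} alone (see \eqref{ellid}), while $R_2$ carries $E_\alpha f_2$ and is treated as a remainder with the smallness $|E_\alpha f_2|\lesssim\eta^{-1}\|\nabla f_0\|_{L^\infty}\min\{|\alpha|,\eta\}$; this produces the new terms $B_1,B_2,B_3$ in \eqref{hdre}--\eqref{B123}, of which $B_1=C_d\eta^{-1}L\int_0^{|z|}\rho_T(r)r^{-1}dr$ is \emph{not} of the form $\tilde C\rho_T(z)$ and must be handled via $\int_0^{\tilde z}\omega(r)r^{-1}dr\le \tilde z\le C_0\omega(\tilde z)$. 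Your route buys a closer parallel to the 2D proof (full-space dissipation recovered at the cost of an $O(\lambda_0^{-1}\rho_T(z))$ error absorbed into $\tilde C$, exactly as the $\int_{|\alpha|\ge\sigma}$ correction in \eqref{Q22}--\eqref{P2}); the paper's route buys global-in-$\alpha$ ellipticity and isolates all the mollification-dependent error in $\tilde Q_3$. One caution for your version: near $\alpha=0$ you must keep the $E_\alpha f_2$ contribution \emph{inside} the elliptic kernel (as your displayed lower bound on $1+(d+1)E_\alpha f\,\Delta_\alpha f/\langle\Delta_\alpha f\rangle^2$ implicitly does) rather than peeling it off as a crude remainder, since $|R_2|\cdot|\Delta_\alpha f_e(y)-\Delta_\alpha f_e(x)|\,|\alpha|^{-d}\sim \eta^{-1}\rho_T(z)|\alpha|^{-d}$ is not integrable at the origin; with that understood, the sign decompositions of $G(x,y,\alpha)$ and the choice of $\omega$ go through as you describe.
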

	\begin{proof}
		
		For simplicity, denote $f_2= f_0\ast \phi_\eta$ and $f_1=f-f_2$.
		Taking directional derivative $\partial_e=e\cdot\nabla$ in \eqref{hde} we obtain
		\begin{align*}
			\partial_{t} f_e(x)=\int_{\mathbb{R}^d} \frac{\hat{\alpha} \cdot \nabla f_e(x)}{\left\langle\Delta_{\alpha} f(x)\right\rangle^{d+1}} \frac{d\alpha}{|\alpha|^d}
			- \int _{\mathbb{R}^d}	R(x,\alpha){\Delta_{\alpha}  f_e(x)} \frac{d\alpha}{|\alpha|^d},
		\end{align*}
		where we denote $\hat \alpha=\frac{\alpha}{|\alpha|}$. Here
		\begin{align*}
			R(x,\alpha)&=\frac{1}{\langle\Delta_\alpha f\rangle^{d+1}}\left(1+(d+1)\frac{\Delta_{\alpha} fE_\alpha f(x)  }{\left\langle\Delta_{\alpha} f\right\rangle^{2}}\right)\\
			&=\frac{1}{\langle\Delta_\alpha f\rangle^{d+1}}\left(1+(d+1)\frac{\Delta_{\alpha} fE_\alpha f_1(x) }{\left\langle\Delta_{\alpha} f\right\rangle^{2}}\right)\\
			&\quad\quad+\frac{(d+1)\Delta_{\alpha} fE_\alpha f_2(x) }{\left\langle\Delta_{\alpha} f\right\rangle^{d+3}}\\
			&:=R_1+R_2.
		\end{align*} 
		Note that 
		\begin{align}\label{ellid}
			R_1\geq \frac{1-2(d+1)\|\nabla f_1\|_{L^\infty}}{(1+\|\nabla f\|_{L^\infty})^{d+1}}\geq \frac{1-4(d+1)c_d}{(1+\|\nabla f_0\|_{L^\infty}+c_d)^{d+1}}>0,
		\end{align}
		where we take $c_d<\frac{1}{5(d+1)}$.		We have the equation 
		\begin{align*}
			&\partial_{t} f_e(x)-\partial_{t} f_e(y)\\&=\int_{\mathbb{R}^d} \frac{\hat{\alpha} \cdot \nabla f_e(x)}{\left\langle\Delta_{\alpha} f(x)\right\rangle^{d+1}} - \frac{\hat{\alpha} \cdot \nabla f_e(y)}{\left\langle\Delta_{\alpha} f(y)\right\rangle^{d+1}} \frac{d\alpha}{|\alpha|^d}\\
			&\quad\quad- \int_{\mathbb{R}^d} \Delta_{\alpha}  f_e(x)	R_1(x,\alpha) - \Delta_{\alpha}  f_e(y)R_1(y,\alpha)	 \frac{d\alpha}{|\alpha|^d}\\
			&\quad\quad- \int_{\mathbb{R}^d} \Delta_{\alpha}  f_e(x)	R_2(x,\alpha) - \Delta_{\alpha}  f_e(y)R_2(y,\alpha)	 \frac{d\alpha}{|\alpha|^d}\\
			&:=\tilde Q_1+\tilde Q_2+\tilde Q_3.
		\end{align*}
		For simplicity, denote $z=x-y$. Similar as in \eqref{del}-\eqref{2de}, we have
		\begin{align*}
			\nabla f_e(T,x)=	\nabla f_e(T,y)=\nabla \rho_T(|z|)=\hat z\rho_T'(z).
		\end{align*}
		Hence 
		\begin{align*}
			&\int_{\mathbb{R}^d} \frac{\hat{\alpha} \cdot \nabla f_e(x)}{\left\langle\Delta_{\alpha} f(x)\right\rangle^{d+1}} \frac{d\alpha}{|\alpha|^d}-\int_{\mathbb{R}^d} \frac{\hat{\alpha} \cdot \nabla f_e(y)}{\left\langle\Delta_{\alpha} f(y)\right\rangle^{d+1}} \frac{d\alpha}{|\alpha|^d}\\
			&\quad\quad=\rho_T'(z)\int_{\mathbb{R}^d} \hat{\alpha} \cdot \hat z\left(\frac{1}{\left\langle\Delta_{\alpha} f(x)\right\rangle^{d+1}} -\frac{1}{\left\langle\Delta_{\alpha} f(y)\right\rangle^{d+1}}\right)\frac{d\alpha}{|\alpha|^d}.
		\end{align*}
		Using the same idea when we estimate $Q_1$ in Lemma \ref{lemmain}, we obtain 
		\begin{align}\label{Q1d}
			\tilde Q_1\leq \rho'_T(z)\left(\int_0^ {|z|}\rho_T\left(r\right)\frac{dr}{r}+|z|\int_ {|z|}^\infty\rho_T\left(r\right)\frac{dr}{r^2}\right).
		\end{align}
		Moreover, thanks to the ellipticity \eqref{ellid}, we have $R_1(x,\alpha)\geq  \lambda>0$. We can estimate $\tilde Q_2$ using ideas from \cite{Cameron2020}. More precisely, we denote
		\begin{equation*}
			G(x,y,\alpha)=\Delta_\alpha f_e(y)R_1(y,\alpha)-\Delta_\alpha f_e(x)R_1(x,\alpha).
		\end{equation*}
		We can  decompose $G(x,y,\alpha)$ in two different ways, namely
		\begin{equation*}
			\begin{aligned}
				G(x,y,\alpha)=(\Delta_\alpha f_e(y)-\Delta_\alpha f_e(x))R_1(y,\alpha)+\Delta_\alpha f_e(x)(R_1(y,\alpha)-R_1(x,\alpha))\\
				=(\Delta_\alpha f_e(y)-\Delta_\alpha f_e(x))R_1(x,\alpha)+\Delta_\alpha f_e(y)(R_1(y,\alpha)-R_1(x,\alpha)).
			\end{aligned}
		\end{equation*}
		Since
		\begin{equation*}
			\delta_\alpha f_e(y)-\delta_\alpha f_e(x)=-\rho_T(z)+f_e(x-\alpha)-f_e(y-\alpha)\leq 0.
		\end{equation*}
		By the ellipticity of $R_1$, we can see that 
		\begin{equation}\label{Glowb}
			\begin{aligned}
				G(x,y,\alpha)\leq& \lambda (\Delta_\alpha f_e(y)-\Delta_\alpha f_e(x))+(\Delta_\alpha f_e(x))_+(R_1(y,\alpha)-R_1(x,\alpha))_+\\
				&+(\Delta_\alpha f_e(y))_-(R_1(y,\alpha)-R_1(x,\alpha))_-.
			\end{aligned}
		\end{equation}
		Moreover, by \eqref{hdcrho} and  \eqref{coeq1} we have
		\begin{equation}\label{del1}
			\begin{aligned}
				&\delta_\alpha f_e(x)\geq \rho_T(|z|)-\rho_T(|z|+|\alpha|),\\
				&\delta_\alpha f_e(y)\leq -(\rho_T(|z|)-\rho_T(|z|+|\alpha|)).
			\end{aligned}
		\end{equation}
		Substituting this into \eqref{Glowb} gives that  
		\begin{equation*}
			\begin{aligned}
				&\int_{|\alpha|>|z|}G(x,y,\alpha)\frac{d\alpha}{|\alpha|^d} \\
				&\ \leq  \int_{|\alpha|>|z|}\lambda\big(\delta_\alpha f_e(y)-\delta_\alpha f_e(x)\big)+(\rho_T(|z|+|\alpha|)-\rho_T(|z|))|R_1(x,\alpha)-R_1(y,\alpha)|\frac{d\alpha}{|\alpha|^{d+1}}.
			\end{aligned}
		\end{equation*}
		Denote $L=2\|\nabla f_0\|_{L^\infty}+4c_d$, then we can prove that
		\begin{equation}\label{difR1}
			|R_1(x,\alpha)-R_1	(y,\alpha)|\leq C_d(1+L)\rho_T(|z|).
		\end{equation}
		Hence
		\begin{equation}\label{lar2}
			\begin{aligned}
				\int_{|\alpha|>|z|}G(x,y,\alpha)\frac{d\alpha}{|\alpha|^d} \leq  &\lambda\int_{|\alpha|>|z|}\frac{\Delta_\alpha f_e(y)-\Delta_\alpha f_e(x)}{|\alpha|^d}d\alpha\\
				&+C_d(1+L)\rho_T(|z|)\int_{|z|}^{\infty}\frac{\rho_T(|z|+r)-\rho_T(|z|)}{r^2}dr.
			\end{aligned}
		\end{equation}
		For the other case $|\alpha|<|z|$, by adding and
		subtracting a linear term, we have that 
		\begin{equation*}
			\begin{aligned}
				G(x,y,\alpha)=&(\Delta_\alpha f_e(y)-\rho'_T(z)\hat{z}\cdot\hat{\alpha})R_1(y,\alpha)-(\Delta_\alpha f_e(x)-\rho'_T(z)\hat{z}\cdot\hat{\alpha})R_1(x,\alpha)\\
				&+\rho'_T(|z|)\hat{z}\cdot\hat{\alpha}(R_1(y,\alpha)-R_1(x,\alpha)).
			\end{aligned}
		\end{equation*}
		By the concavity of $\rho_T$, we have 
		\begin{equation*}
			\rho_T(|h+\xi|)-\rho_T(|\xi|)\leq \rho'_T(|\xi|)(|h+\xi|-|\xi|)\leq \rho'_T(|\xi|)\hat{\xi}\cdot h+\frac{\rho'_T(|\xi|)}{|\xi|}|h|^2.
		\end{equation*}
		By similar methods in \eqref{Glowb}, and use $\eqref{del1}$, we can obtain that
		\begin{equation*}
			\begin{aligned}
				G(x,y,\alpha)\leq&\lambda(\Delta_\alpha f_e(y)-\Delta_\alpha f_e(x))+(\Delta_\alpha f_e(x)-\rho'_T(z)\hat{z}\cdot\hat{\alpha})_+(R_1(x,\alpha)-R_1(y,\alpha))_+\\
				&+(\Delta_\alpha f_e(y)-\rho'_T(z)\hat{z}\cdot\hat{\alpha})_-(R_1(x,\alpha)-R_1(y,\alpha))_-\\
				&+\rho'_T(|z|)\hat{z}\cdot \hat \alpha(R_1(y,\alpha)-R_1(x,\alpha))\\
				\leq &\lambda(\Delta_\alpha f_e(y)-\Delta_\alpha f_e(x))+\rho'_T(|z|)\hat{z}\cdot\hat{\alpha}(R_1(y,\alpha)-R_1(x,\alpha))\\
				&+\frac{\rho'_T(|z|)}{|z|}|\alpha||R_1(x,\alpha)-R_1(y,\alpha)|.
			\end{aligned}
		\end{equation*}
		%
		Combining this with \eqref{difR1} to get that
		\begin{equation}\label{sma2}
			\begin{aligned}
				\int_{|\alpha|<|z|}G(x,y,\alpha)\frac{d\alpha}{|\alpha|^d}\leq& \lambda\int_{|\alpha|<|z|}\frac{\Delta_\alpha f_e(y)-\Delta_\alpha f_e(x)}{|\alpha|^d}d\alpha\\
				& +2C_d(1+L)\rho'_T(|z|)\int_0^{|z|}\frac{\rho_T(r)}{r}dr.
			\end{aligned}
		\end{equation}
		So we finally get from \eqref{lar2} and \eqref{sma2} that
		\begin{equation*}
			\begin{aligned}
				&\int_{\mathbb{R}^d}G(x,y,\alpha)\frac{d\alpha}{|\alpha|^d}- \lambda\int_{\mathbb{R}^d}\frac{\Delta_\alpha f_e(y)-\Delta_\alpha f_e(x)}{|\alpha|^d}d\alpha \\
				&\leq 2C_d(1+L)\rho'_T(|z|)\int_0^{|z|}\frac{\rho_T(r)}{r}dr+C_d(1+L)\rho_T(|z|)\int_{|z|}^{\infty}\frac{\rho_T(|z|+r)-\rho_T(|z|)}{r^2}dr.
			\end{aligned}
		\end{equation*}
		By  in \cite[Lemma 5.2]{Cameron2020}, we can see that
		\begin{equation*}
			\begin{aligned}
				\int_{\mathbb{R}^d}\frac{\Delta_\alpha f_e(y)-\Delta_\alpha f_e(x)}{|\alpha|^d}d\alpha\leq&-\mathbf{c}_d \int_0^{|z|}\frac{\delta_r \rho_T(|z|)+\delta_{-r}\rho_T(|z|)}{r^2}dr\\
				&+\mathbf{c}_d\int_{|z|}^{\infty}\frac{\rho_T(r+|z|)-\rho_T(r-|z|)-2\rho_T(z)}{r^2}dr.
			\end{aligned}
		\end{equation*}
		Hence we conclude that 
		\begin{equation}\label{Q2}
			\begin{aligned}
				\tilde Q_2\leq& C_d(1+L)\rho'_T(|z|)\int_0^{|z|}\frac{\rho_T(r)}{r}dr+C_d(1+L)\rho_T(|z|)\int_{|z|}^{\infty}\frac{\rho_T(|z|+r)-\rho_T(|z|)}{r^2}dr\\
				&-\mathbf{c}_d\lambda\int_0^{|z|}\frac{\delta_r \rho_T(|z|)+\delta_{-r}\rho_T(|z|)}{r^2}dr+\mathbf{c}_d\lambda\int_{|z|}^{\infty}\frac{\rho_T(r+|z|)-\rho_T(r-|z|)-2\rho_T(z)}{r^2}dr.
			\end{aligned}
		\end{equation}
		It remains to estimate $\tilde Q_3$, we  have 
		\begin{align*}
			\tilde Q_3
			&=\int_{|\alpha|\leq |z|} \left({\delta_{\alpha}  f_e(y)}	R_2(y,\alpha) - {\delta_{\alpha}  f_e(x)}	R_2(x,\alpha)\right) \frac{d\alpha}{|\alpha|^{d+1}}\\
			&\quad\quad\quad+\int_{|\alpha|\geq |z|} \left({\delta_{\alpha}  f_e(y)}	R_2(y,\alpha) - {\delta_{\alpha}  f_e(x)}	R_2(x,\alpha)\right) \frac{d\alpha}{|\alpha|^{d+1}}\\
			&:=\tilde Q_{3,1}+\tilde Q_{3,2}.
		\end{align*}
		Observe that
		\begin{align*}
			|R_2{(x,\alpha)}|\leq (d+1)|E_\alpha f_2(x)|.
		\end{align*} 
		Recall the definition of $f_2$, we have 
		\begin{align*}
			\frac{|R_2{(x,\alpha)}|}{|\alpha|^\theta}\lesssim	\frac{|E_\alpha f_2(x)|}{|\alpha|^\theta}\lesssim \|f_2\|_{\dot C^{1,\theta}}\lesssim \eta^{-\theta}\|\nabla f_0\|_{L^\infty},\ \ \ \forall \theta\in(0,1].
		\end{align*}
		Hence,
		\begin{align}\label{Q21}
			\tilde Q_{3,1}
			&\leq C_d\eta^{-1}L\int_0^{ |z|}\frac{\rho_T(r)}{r}dr.
		\end{align}
		On the other hand, we have 
		\begin{align*}
			\tilde Q_{3,2}	&=\int_{|\alpha|\geq |z|} \left({\delta_{\alpha}  f_e(y)}- {\delta_{\alpha}  f_e(x)}\right)	R_2(y,\alpha)\frac{d\alpha}{|\alpha|^{d+1}}\\
			&\quad\quad\quad+\int_{|\alpha|\geq |z|} {\delta_{\alpha}  f_e(x)}\left(	R_2(y,\alpha)-R_2(x,\alpha)\right)\frac{d\alpha}{|\alpha|^{d+1}}\\
			&=\tilde{Q}_{3,2,1}+\tilde{Q}_{3,2,2}.
		\end{align*}
		By \eqref{hdcrho}, it is easy to check that
		\begin{align*}
			|\delta_\alpha f_e(x)|\leq \rho_T(|\alpha|),\ \ \ 
			\left|{\delta_{\alpha}  f_e(y)}- {\delta_{\alpha}  f_e(x)}\right|\leq 2\rho_T(|z|),\ \ 
			|\Delta_\alpha f(y)-\Delta_\alpha f(x)|\leq 2\rho_T(|z|).
		\end{align*}
		Moreover, recall that $f_2=f_0\ast \phi_\eta$, hence 
		\begin{align*}
			&|\hat \alpha\cdot \nabla f_2(h)-\Delta_\alpha f_2(h)|\leq \min\{1,\eta^{-1}{|\alpha|}\}\|\nabla f_0\|_{L^\infty},\\
			&|(\hat \alpha\cdot \nabla f_2-\Delta_\alpha f_2)(x)-(\hat \alpha\cdot \nabla f_2-\Delta_\alpha f_2)(y)|\leq \min\{1,\eta^{-1}|z|\} \min\{1,\eta^{-1}{|\alpha|}\}\|\nabla f_0\|_{L^\infty}.
		\end{align*}
		Then we obtain 
		\begin{align*}
			&|R_2(x,\alpha)|\leq C_d (1+L)\min\{1,\eta^{-1}{|\alpha|}\},\\	&\left|	R_2(y,\alpha)-R_2(x,\alpha)\right|\leq C_d (1+L)\left(\rho_T(z)+\min\{1,{\eta}^{-1}{|z|}\}\right)\min\{1,\eta^{-1}{|\alpha|}\}.
		\end{align*}
		Hence 
		\begin{align*}
			|\tilde Q_{3,2,1}|	&\leq C_d(1+L)\rho_T(|z|)\int_{|z|}^\infty \min\{1,\eta^{-1}r\}\frac{dr}{r^2},\\
			|\tilde Q_{3,2,2}|	&\leq C_d(1+L)(\rho_T(|z|)+\min\{1,\eta^{-1}|z|\})\int_{|z|}^\infty \rho_T(r)\min\{1,\eta^{-1}r\}\frac{dr}{r^2}.
		\end{align*}
		We conclude from this and \eqref{Q21} that 
		\begin{equation}\label{Q3}
			\begin{aligned}
				|\tilde Q_3|\leq &C_d \eta^{-1}L\int_0^{|z|}\frac{\rho_T(r)}{r}dr+ C_d(1+L)^2\rho_T(|z|)\int_{|z|}^\infty \min\{1,\eta^{-1}r\}\frac{dr}{r^2}\\
				&+ C_d(1+L)\min\{1,\eta^{-1}|z|\}\int_{|z|}^\infty \rho_T(r)\min\{1,\eta^{-1}r\}\frac{dr}{r^2}.
			\end{aligned}	
		\end{equation}
		We conclude from \eqref{Q1d}, \eqref{Q2} and \eqref{Q3} that
		\begin{equation}\label{hdre}
			\begin{aligned}
				&\left.\frac{d}{dt}(f_e(t,x)-f_e(t,y))\right|_{t=T}\\
				&\leq C_d(1+L)\rho'_T(|{z}|)\left(\int_{0}^{|{z}|}\rho_T(r)\frac{dr}{r}+|{z}|\int_{|{z}|}^\infty\rho_T(r)\frac{dr}{r^2}\right)\\
				&\ \ \ +C_d(1+L)\rho_T(|z|)\int_{|{z}|}^\infty\frac{\rho_T(r+|{z}|)-\rho_T(|z|)}{r^2}dr-\mathbf{c}_d\lambda\int_0^{|{z}|}\frac{\delta_r\rho_T(|z|)+\delta_{-r}\rho_T(|z|)}{r^2}dr\\
				&\ \ \ +\mathbf{c}_d\lambda\int_{|{z}|}^{\infty}\frac{\rho_T(r+|{z}|)-\rho_T(r-|{z}|)-2\rho_T(|z|)}{r^2}dr\\
				&\ \ \ +C_d \eta^{-1}L\int_0^{|z|}\frac{\rho_T(r)}{r}dr+ C_d(1+L)^2\rho_T(|z|)\int_{|z|}^\infty \min\{1,\eta^{-1}r\}\frac{dr}{r^2}\\
				&\ \ \ + C_d(1+L)\min\{1,\eta^{-1}|z|\}\int_{|z|}^\infty \rho_T(r)\min\{1,\eta^{-1}r\}\frac{dr}{r^2}.
			\end{aligned}
		\end{equation}
		We denote $\tilde{z}=\frac{C_0z}{T}$, and $\omega(|\tilde{z}|)=\rho_T(|z|)$, where the constant $C_0$ will be fixed later. Then we have 
		\begin{align*}
			\rho_T'(|z|)=\frac{C_0}{T}\omega'(|\tilde{z}|),\ \  \ 
			\rho''_T(|z|)=\frac{C_0^2}{T^2}\omega''(|\tilde{z}|).
		\end{align*}
		We take $\omega(\alpha)$ as follows, where $\delta,\gamma$ will be fixed later,
		\begin{equation*}
			\begin{aligned}
				&	\omega(\alpha)=\alpha-\frac{4}{3}\alpha^{\frac{3}{2}},\ \alpha\leq\delta,\\
				&	\omega'(\alpha)=\frac{\gamma}{\alpha(\ln(\frac{\alpha}{\delta}+10))},\ \alpha\geq\delta.
			\end{aligned}
		\end{equation*}
		We mainly discuss the last two lines in the right hand side of \eqref{hdre}, which are new terms compared with \eqref{final}.  For simplicity, denote 
		\begin{align*}
			&B_1=-	C_d \eta^{-1}L\int_0^{|z|}\frac{\rho_T(r)}{r}dr,\ \ \ \ B_2=C_d(1+L)^2\rho_T(|z|)\int_{|z|}^\infty \min\{1,{\eta}^{-1}r\}\frac{dr}{r^2},\\
			&B_3= C_d(1+L)\min\{1,\eta^{-1}|z|\}\int_{|z|}^\infty \rho_T(r)\min\{1,{\eta}^{-1}r\}\frac{dr}{r^2}.
		\end{align*}
		Our aim is to prove that for any $0<\epsilon_1\ll 1$, there exist suitable constants $\delta,\gamma, \tilde C$ such that 
		\begin{equation}\label{B123}
			B_1+B_2+B_3\leq \begin{cases}
				-	 \epsilon_1 |z|\rho_T''(|z|)+\tilde C \rho_T(|z|),\ \ \text{if}\ |\tilde z|\leq \delta,\\
				\epsilon_1 \frac{\rho_T(|z|)}{|z|}+\tilde C \rho_T(|z|),\quad\quad \ \text{if}\ |\tilde z|> \delta.
			\end{cases}
		\end{equation}
		Using the fact that  $\omega(|\tilde{z}|)\leq 2L$, we have  $\tilde{z}\leq \omega^{-1}(2L)$. Take $C_0\geq \frac{\omega^{-1}(2L)}{2L}$, by concavity we obtain that 
		\begin{equation*}
			\frac{|\tilde{z}|}{\omega(|\tilde{z}|)}\leq\frac{\omega^{-1}(2L)}{2L}\leq C_0.
		\end{equation*}
		Hence 
		\begin{align*}
			B_1=C_d\eta^{-1} L\int_{0}^{|\tilde z|} \frac{\omega(r)}{r}dr\leq   C_d\eta^{-1}L|\tilde z|\leq \tilde C_1\omega(|\tilde z|),
		\end{align*}
		where $\tilde C_1=C_0C_d\eta^{-1}L$.\\
		For $B_2$ and $B_3$, we consider $|\tilde z|\leq \delta$ and $|\tilde z|\geq \delta$ separately. If $|\tilde z|\leq \delta$, then 
		\begin{align*}
			B_2&\leq C_d \eta^{-1}(1+L)^2 \rho_T(|z|)\left(1+\mathbf{1}_{|z|\leq\eta}\ln \frac{\eta}{|z|}\right)\\
			&= C_d \eta^{-1}(1+L)^2 \omega(|\tilde z|)\left(1+\mathbf{1}_{|z|\leq\eta}\ln \frac{\eta}{|z|}\right)\\
			&\leq -\frac{\epsilon_1}{10} \frac{C_0|\tilde z|\omega''(|\tilde z|)}{T}=-\frac{\epsilon_1}{10}|z|\rho_T''(|z|),
		\end{align*}
		this follows by taking $\delta$ small enough such that $C_d \eta^{-2}(1+L)^2\left(1+\mathbf{1}_{|z|\leq\eta}\ln \frac{\eta}{|z|}\right)\leq \left(\frac{C_0}{T|z|}\right)^\frac{1}{2}=-\frac{\epsilon_1}{10}\frac{C_0\omega''(|\tilde z|)}{T}$ holds for any $|\tilde z|\leq \delta$. Moreover, by concavity, we have $\frac{\rho_T(r)}{r}\leq \frac{\rho_T(|z|)}{|z|}$, hence 
		\begin{align*}
			\int_{|z|}^\infty\rho_T(r)\min\{1,{\eta}^{-1}r\}\frac{dr}{r^2}&\leq \int_{|z|}^{\max\{|z|,\eta\}}\eta^{-1}\frac{\rho_T(|z|)}{|z|}{dr}+L\int_{\max\{|z|,\eta\}}^\infty\frac{dr}{r^2}\\
			&\leq \frac{\rho_T(z)}{|z|}+L\eta^{-1}.
		\end{align*}
		This yields that 
		\begin{align*}
			B_3&\leq C_d(1+L)^2\eta^{-1}|z|\left(\frac{\rho_T(z)}{|z|}+\eta^{-1}\right)\leq C_d(1+L)^2\eta^{-1}\rho_T(z)+C_d(1+L)^2\eta^{-2}|z|\\
			&\leq \tilde C_2\rho_T(z)-\frac{\epsilon_1}{10} \frac{C_0|\tilde z|\omega''(|\tilde z|)}{T}=\tilde C_2\rho_T(z)-\frac{\epsilon_1}{10} |z|\rho_T''(|z|),
		\end{align*}where $\tilde C_2=C_d(1+L)^2\eta^{-1}$.\\
		On the other hand, if $|\tilde z|>\delta$, we have 
		\begin{align*}
			&\int_{|z|}^\infty \min\{1,\eta^{-1 }r\}\frac{dr}{r^2}\leq\eta^{-1}\int_{|z|}^{M|z|}\frac{dr}{r}+\int_{M|z|}^\infty\frac{dr}{r^2}
			\leq \eta^{-1}\ln M+\frac{1}{M|z|},
		\end{align*}
		for any $M>1$.
		Take $M=\frac{10C_d (1+L)^2}{\epsilon_1}$, then
		\begin{align*}
			B_2\leq C_d (1+L)^2\rho_T(|z|)\left(\eta^{-1}\ln M+\frac{1}{M|z|}\right)\leq \tilde C_3 \rho_T(|z|)+\frac{\epsilon_1}{10}\frac{\rho_T(|z|)}{|z|},
		\end{align*}
		where $\tilde C_3=C_d (1+L)^2\eta^{-1}\log M$.
		Moreover, it is easy to check that 
		\begin{align*}
			\int_{|z|}^\infty \rho_T(r)\frac{dr}{r^2}&=\int_{|z|}^\infty \left(\rho_T(|z|)+\int_{|z|}^r \rho_T'(\xi)d\xi\right)\frac{dr}{r^2}\\
			&=\frac{\rho_T(|z|)}{|z|}+\int_{|z|}^\infty \rho_T'(\xi)  \frac{d\xi}{\xi}.
		\end{align*}
		By definition, 
		\begin{align*}
			\int_{|z|}^\infty\rho_T'(\xi)\frac{d\xi}{\xi}=\frac{C_0}{T}\int_{|\tilde z|}^\infty \frac{\omega'(r)}{r}dr=\frac{C_0}{T}\int_{|\tilde z|}^\infty \frac{\gamma}{r^2(\log(\frac{r}{\delta}+10))}dr\leq \gamma \frac{C_0}{T|\tilde z|}=\frac{\gamma}{|z|}.
		\end{align*}
		Hence one has 
		\begin{align*}
			B_3\leq C_d \eta^{-1}(1+L)^2\left(\rho_T(|z|)+{\gamma}\right) \lesssim \tilde C_4 \omega(|\tilde z|),\ \ \text{where}\ \tilde C_4= 2{C_d \eta^{-1}(1+L)^2},
		\end{align*}
		where we take $\gamma<\omega(\delta)$. Then we obtain \eqref{B123} by taking $\tilde C=\sum_{k=1}^4\tilde C_k$. 
		
		Other terms in \eqref{hdre}, except $B_1,B_2,B_3$, have been calculated in detail in the proof of Theorem \ref{thm}(see also \cite{Cameron2020}). For simplicity, we denote 
		\begin{align*}
			&A_1=C_d(1+L)\rho'_T(|{z}|)\left(\int_{0}^{|{z}|}\rho_T(r)\frac{dr}{r}+|{z}|\int_{|{z}|}^\infty\rho_T(r)\frac{dr}{r^2}\right),\\
			&A_2=C_d(1+L)\rho_T(|z|)\int_{|{z}|}^\infty\frac{\rho_T(r+|{z}|)-\rho_T(|z|)}{r^2}dr,
		\end{align*}
		and diffusion term
		\begin{align*}
			D=&-\mathbf{c}_d\lambda\int_0^{|{z}|}\frac{\delta_r\rho_T(|z|)+\delta_{-r}\rho_T(|z|)}{r^2}dr\\
			&+\mathbf{c}_d\lambda\int_{|{z}|}^{\infty}\frac{\rho_T(r+|{z}|)-\rho_T(r-|{z}|)-2\rho_T(|z|)}{r^2}dr.
		\end{align*}
		Without repeated calculation, we claim that 
		\begin{align}\label{Dif}
			D\leq \begin{cases}
				\frac{\mathbf{c}_d\lambda}{2}\frac{C_0}{T}|\tilde z|\omega''(|\tilde z|)= \frac{\mathbf{c}_d\lambda}{2} |z|\rho_T''(|z|) ,\ \ \text{if}\ |\tilde z|\leq \delta,\\
				-\frac{\mathbf{c}_d\lambda}{2}\frac{C_0}{T}\frac{\omega(|\tilde z|)}{|\tilde z|}= -\frac{\mathbf{c}_d\lambda}{2} \frac{\rho_T(|z|)}{|z|},\quad\quad \ \text{if}\ |\tilde z|> \delta.
			\end{cases}
		\end{align}And, 
		for any $\epsilon_2>0$, by suitable choice of $\delta$ and $\gamma$, there holds 
		\begin{align}\label{A12}
			A_1+A_2\leq \begin{cases}
				-\epsilon_2 |z|\rho_T''(|z|),\ \ \text{if}\ |\tilde z|\leq \delta,\\
				\epsilon_2 \frac{\rho_T(|z|)}{|z|},\quad\quad \ \text{if}\ |\tilde z|> \delta.
			\end{cases}
		\end{align}
		To finish the proof, we need to show that 
		\begin{align}\label{ABD}
			& A_1+A_2+B_1+B_2+B_3+D< \frac{d}{dt}\left.\left(\rho_t(|z|)\right)\right|_{t=T}+\tilde C\rho_T(|z|)=-\frac{\tilde{z}}{T}\omega'(|\tilde{z}|)+\tilde{C}w(|\tilde{z}|).
		\end{align}
		By the definition of $\omega$, it is easy to check that 
		\begin{align*}
			\frac{\tilde z}{T}\omega'(|\tilde z|)\leq \begin{cases}
				-\frac{\mathbf{c}_d\lambda}{100} \frac{C_0}{T}|\tilde z|\omega''(|\tilde z|),\ \ \text{if}\ |\tilde z|\leq \delta,\\
				\frac{\mathbf{c}_d\lambda}{100} \frac{C_0}{T}\frac{\omega(|\tilde z|)}{|\tilde z|},\quad\quad \ \text{if}\ |\tilde z|> \delta,
			\end{cases}
		\end{align*}
		assuming $\delta,\gamma$ sufficiently small. 
		Take $\epsilon_1=\epsilon_2=\frac{\mathbf{c}_d\lambda}{100}$, by \eqref{B123}, \eqref{Dif} and \eqref{A12}, we can take $\delta,\gamma $ small enough such that 
		\begin{align*}
			& A_1+A_2+B_1+B_2+B_3+D+\frac{\tilde{z}}{T}\omega'(|\tilde{z}|)\\
			&< \begin{cases}
				\frac{\mathbf{c}_d\lambda}{10} \frac{C_0}{T}|\tilde z|\omega''(|\tilde z|)+\tilde C \omega(|\tilde z|)
				\ \leq \tilde C\rho_T(|z|),\ \ \ \text{if}\ |\tilde z|\leq \delta,\\
				-\frac{\mathbf{c}_d\lambda}{10} \frac{C_0}{T}\frac{\omega(|\tilde z|)}{|\tilde z|}+\tilde C \omega(|\tilde z|)\leq \tilde C\rho_T(|z|),\quad\quad \ \text{if}\ |\tilde z|> \delta.
			\end{cases}
		\end{align*}
		This implies \eqref{ABD}. Then we complete the proof.

	\end{proof}
	\begin{proof}[Proof of Theorem \ref{thmhd}]
		Now we give a proof of the existence of solution in high dimensions by bootstrap. Define the smooth initial data 
		$$
		f_{0,\nu}= (f_0\chi_{\nu^{-1}})\ast \phi_{\nu},\ \ \ \nu\in(0,1).
		$$
		Denote $f_\nu$ the unique classical solution to \eqref{hde} with initial data $f_{0,\nu}$, and set $\tilde{T}$ as defined in  Lemma \ref{lemhd}. We define \begin{align*}
			T_0=&\sup\left\{T\in [0,\tilde{T}]: \sup_{t\in[0,T]}\|\nabla f_\nu-\nabla f_{0,\nu}*\phi_\eta\|_{L^\infty}<2c_d,\right.\\
			&\quad\quad\quad\left.|\nabla f_\nu(t,x)-\nabla f_\nu(t,y)|\leq\rho\left(\frac{|x-y|}{t}\right)e^{\tilde C t},\ \forall x\neq y\in\mathbb{R}^d, t\in[0,T]. \right\}
		\end{align*} Then by Lemma \ref{lemhd} and Lemma \ref{lemmainhd}, we can prove that $T_0=\tilde{T}$ following the proof of Theorem \ref{thm}.
		By compactness argument, we obtain the result by taking $\nu\rightarrow 0$.
	\end{proof}
	
	\subsection{ Uniqueness}
	
	First we define
	\begin{equation*}
		\rho(g,\epsilon) := \sup\{r \geq 0: |g(x+h) -g(x)|\leq \epsilon, \forall x\in \mathbb{R}^d, |h|\leq r\}.
	\end{equation*}
	Our main theorem for uniqueness in high dimension is as follows. 
	\begin{theorem}
		Let $f_1$, $f_2$ be two classical solutions of the Muskat equation \eqref{hde} with $\|\nabla f_i\|_{L^\infty}\leq L$, $\rho(\nabla f_i(t),\frac{2}{d+1})\geq \rho_0$, and 
		$$
		\lim_{|x|\rightarrow \infty}f_i(t,x)=0
		$$
		uniformly in $t$ for $i=1,2$. Then $M(t) = ||f_1-f_2||_{L^\infty}(t)$ satisfies 
		\begin{equation*}
			M'(t) \leq C_d(1+L)\rho_0^{-1}M(t),
		\end{equation*}
		where $C_d$ is a constant only depends on dimension. In particular, 
		\begin{equation}\label{uni}
			||f_1 - f_2||_{L^\infty}(t) \leq ||f_1-f_2||_{L^\infty}(0)\exp\left((C_d(1+L)\rho_0^{-1})t\right).
		\end{equation}
	\end{theorem}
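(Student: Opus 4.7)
The plan is to adapt the maximum-principle argument used in the one-dimensional uniqueness proof to the $d$-dimensional setting. First I would write $g=f_1-f_2$ and derive the evolution equation
\begin{align*}
\partial_t g &= \int_{\mathbb{R}^d}\frac{\alpha\cdot\nabla g(x)-\delta_\alpha g(x)}{\langle \Delta_\alpha f_1(x)\rangle^{d+1}}\frac{d\alpha}{|\alpha|^{d+1}} \\
&\quad +\int_{\mathbb{R}^d}\bigl(\alpha\cdot\nabla f_2(x)-\delta_\alpha f_2(x)\bigr)\left[\frac{1}{\langle\Delta_\alpha f_1(x)\rangle^{d+1}}-\frac{1}{\langle\Delta_\alpha f_2(x)\rangle^{d+1}}\right]\frac{d\alpha}{|\alpha|^{d+1}},
\end{align*}
calling the two integrals the dissipation and the commutator. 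The decay assumption $f_i(t,x)\to 0$ guarantees that $M(t)=\|g(t)\|_{L^\infty}$ is attained (or attained in the limit) at some $x_t$, and a Rademacher/almost-everywhere argument together with $\nabla g(x_t)=0$ reduces the analysis to controlling $\partial_t g(x_t)$, which at the maximum picks out the sign $\delta_\alpha g(x_t)\ge 0$.

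Next I would split the commutator at the scale $|\alpha|=\rho_0$. On the inner region $|\alpha|\le\rho_0$, the hypothesis $\rho(\nabla f_i(t),\tfrac{2}{d+1})\ge\rho_0$ together with
\[
\alpha\cdot\nabla f_2(x)-\delta_\alpha f_2(x)=\alpha\cdot\int_0^1\bigl(\nabla f_2(x)-\nabla f_2(x-s\alpha)\bigr)\,ds
\]
yields $|\alpha\cdot\nabla f_2(x)-\delta_\alpha f_2(x)|\le \tfrac{2|\alpha|}{d+1}$. Combined with the pointwise bound $\left|\frac{1}{\langle A_1\rangle^{d+1}}-\frac{1}{\langle A_2\rangle^{d+1}}\right|\le \tfrac{d+1}{2}|A_1-A_2|$ (which comes from $|s|/\langle s\rangle^2\le \tfrac{1}{2}$), the inner commutator is dominated by $\int_{|\alpha|\le\rho_0}|\delta_\alpha g(x_t)|/|\alpha|^{d+1}\,d\alpha$. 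The factor $\tfrac{2}{d+1}$ in the hypothesis is precisely what makes the constant in front match the one coming from the derivative of $1/\langle\cdot\rangle^{d+1}$, so this inner contribution is absorbed by the non-positive dissipation $-\int \delta_\alpha g(x_t)/\langle\Delta_\alpha f_1\rangle^{d+1}\,d\alpha/|\alpha|^{d+1}$ (using $\langle\Delta_\alpha f_1\rangle\le\sqrt{1+L^2}$ to turn the weight into a uniform constant, at the cost of a factor depending on $L$ that will be absorbed in the final constant $C_d(1+L)$).

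For the outer region $|\alpha|>\rho_0$, I would estimate crudely: $|\alpha\cdot\nabla f_2-\delta_\alpha f_2|\le 2L|\alpha|$, and for the kernel difference use $|1/\langle A_1\rangle^{d+1}-1/\langle A_2\rangle^{d+1}|\le \tfrac{d+1}{2}|\Delta_\alpha g|\le (d+1)M/|\alpha|$, which gives
\[
|\text{outer commutator}|\le C_d L M\int_{|\alpha|>\rho_0}\frac{d\alpha}{|\alpha|^{d+1}}=C_d L M\rho_0^{-1}.
\]
An entirely symmetric argument at the minimum point $\tilde x_t$ (where $\delta_\alpha g(\tilde x_t)\le 0$, the dissipation is non-negative, and the inner commutator is again absorbed) produces a bound on $-\partial_t g(\tilde x_t)$. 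Combining the two gives $M'(t)\le C_d(1+L)\rho_0^{-1}M(t)$ in the upper-Dini sense, and Gronwall's inequality yields the stated exponential bound.

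The main obstacle will be the quantitative inner-region absorption: one must verify that after pairing the factor $\tfrac{2}{d+1}$ from the modulus hypothesis with the sharp Lipschitz constant $\tfrac{d+1}{2}$ of $A\mapsto 1/\langle A\rangle^{d+1}$, the remaining constant in front of the inner commutator is indeed dominated by the dissipation's weight $1/\langle\Delta_\alpha f_1\rangle^{d+1}$, up to a factor that is clean enough to be subsumed into $C_d(1+L)$. The rest of the argument is essentially a mechanical split into near/far field, guided by the 1D proof given earlier in the paper.
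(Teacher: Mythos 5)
Your overall skeleton --- evaluating at a maximum point $x_t$ of $g=f_1-f_2$ where $\nabla g(x_t)=0$ and $\delta_\alpha g(x_t)\ge 0$, splitting at $|\alpha|=\rho_0$, estimating the far field crudely by $C_d(1+L)\rho_0^{-1}M(t)$, and closing with Gronwall --- is the same as the paper's, and your outer-region estimate is fine. But the inner-region absorption, which you yourself flag as the main obstacle, genuinely fails as written. After pairing the factor $\tfrac{2}{d+1}$ from the modulus hypothesis with the Lipschitz constant $\tfrac{d+1}{2}$ of $A\mapsto\langle A\rangle^{-(d+1)}$, your bound on the inner commutator is exactly $\int_{|\alpha|\le\rho_0}\delta_\alpha g(x_t)\,|\alpha|^{-(d+1)}\,d\alpha$ with constant $1$, whereas the dissipation only supplies $-\int_{|\alpha|\le\rho_0}\langle\Delta_\alpha f_1\rangle^{-(d+1)}\delta_\alpha g(x_t)\,|\alpha|^{-(d+1)}\,d\alpha$, whose coefficient satisfies $\langle\Delta_\alpha f_1\rangle^{-(d+1)}\ge (1+L^2)^{-(d+1)/2}$ but is strictly less than $1$ wherever $\Delta_\alpha f_1\neq 0$. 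The positive leftover $\bigl(1-(1+L^2)^{-(d+1)/2}\bigr)\int_{|\alpha|\le\rho_0}\delta_\alpha g(x_t)|\alpha|^{-(d+1)}\,d\alpha$ is controlled by $\|\nabla^2 g\|_{L^\infty}$ (or diverges if one only uses $\delta_\alpha g\le 2M$), not by $M(t)=\|g\|_{L^\infty}$, so it cannot be ``absorbed into $C_d(1+L)$'' and the Gronwall inequality does not close.

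The paper sidesteps this by never separating dissipation from commutator in the near field. Since $\nabla f_1(x_t)=\nabla f_2(x_t)$, the whole inner integrand is $N(\Delta_\alpha f_1(x_t),b)-N(\Delta_\alpha f_2(x_t),b)$ with $N(a,b)=(b-a)\langle a\rangle^{-(d+1)}$ and the \emph{same} $b=\hat\alpha\cdot\nabla f_1(x_t)$; one computes $\partial_a N(a,b)=-\langle a\rangle^{-(d+1)}\bigl(1-(d+1)a(a-b)\langle a\rangle^{-2}\bigr)\le 0$ whenever $|a-b|\le\tfrac{2}{d+1}$, because the ``dissipative'' term $1$ and the ``commutator'' term $(d+1)a(a-b)\langle a\rangle^{-2}$ share the same outer weight $\langle a\rangle^{-(d+1)}$, so the comparison is pointwise exact in the integration variable. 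Since $\Delta_\alpha f_1(x_t)\ge\Delta_\alpha f_2(x_t)$ at the maximum, the inner integrand is pointwise nonpositive and is simply discarded. To rescue your decomposition you would have to reproduce this exact cancellation --- e.g., write the kernel difference as $-(d+1)\int_{\Delta_\alpha f_2}^{\Delta_\alpha f_1}s\langle s\rangle^{-(d+3)}\,ds$ and match weights pointwise in $s$ --- rather than take absolute values and a uniform lower bound on the elliptic coefficient.
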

	\begin{proof}
		Without loss of generality, assume $M(t)=\sup_{x}(f_1(t,x)-f_2(t,x))$.  There exists a point $x_t$ such that 
		$$
		M(t)=f_1(t,x_t)-f_2(t,x_t), 
		$$
		then we will get
		\begin{align}\label{hddd}
			\Delta_\alpha (f_1-f_2)(x_t) \geq 0, \qquad \nabla_x (f_1-f_2)(x_t) =0.
		\end{align}	
		We deduce from the equation \eqref{hde} that
		\begin{align*}
			&	\partial_t f_1(t,x_t) - \partial_t f_2(t,x_t)\\ &\ \ =\int_{\mathbb{R}^d} \left(\frac{\hat \alpha\cdot\nabla f_1(x_t) -\Delta_\alpha f_1(x_t)  }{\langle\Delta_\alpha f_1(x_t)\rangle^{{d+1}}} - \frac{\hat \alpha\cdot\nabla f_2(x_t) -\Delta_\alpha f_2(x_t) }{\langle\Delta_\alpha f_2(x_t)\rangle^{d+1}}\right)\frac{d\alpha}{|\alpha|^{d}}.
		\end{align*}
		We first claim that for any $|\alpha|<\rho_0$, there holds 
		\begin{align*}
			\frac{\hat \alpha\cdot\nabla_x f_1(x_t) -\Delta_\alpha f_1(x_t)  }{\langle\Delta_\alpha f_1(x_t)\rangle^{{d+1}}} - \frac{\hat \alpha\cdot\nabla_x f_2(x_t) -\Delta_\alpha f_2(x_t) }{\langle\Delta_\alpha f_2(x_t)\rangle^{d+1}}\leq 0.
		\end{align*}
		To see this, we  consider 
		$$
		N(a, b):=\frac{b-a}{\langle a\rangle^{d+1}}.
		$$
		We take derivative in $a$ and get
		\begin{align*}
			\partial_a N(a, b)=\frac{-1}{\langle a\rangle^{d+1}}\left(1-(d+1)\frac{a(a-b)}{\langle a\rangle^2}\right)\geq \frac{-1}{\langle a \rangle^{d+1}}\left(1-\frac{d+1}{2}|a -b|\right).
		\end{align*}
		Thus if $|a-b|\leq \frac{2}{d+1}$, then $N(a,b)$ is non-increasing in $a$. Note that when $|\alpha|\leq \rho_0$, we have $|\hat \alpha\cdot\nabla_x f_1(x_t) -\Delta_\alpha f_1(x_t) |\leq \frac{2}{d+1}$. Then  by \eqref{hddd} we obtain 
		\begin{align*}
			&\frac{\hat \alpha\cdot\nabla_x f_1(x_t) -\Delta_\alpha f_1(x_t)  }{\langle\Delta_\alpha f_1(x_t)\rangle^{{d+1}}} - \frac{\hat \alpha\cdot\nabla_x f_2(x_t) -\Delta_\alpha f_2(x_t) }{\langle\Delta_\alpha f_2(x_t)\rangle^{d+1}}\\
			&=N(\Delta_\alpha f_1(x_t),\hat \alpha\cdot\nabla_x f_1(x_t))-N(\Delta_\alpha f_2(x_t),\hat \alpha\cdot\nabla_x f_2(x_t))\leq 0.
		\end{align*}
		It remains to consider  $|\alpha|>\rho_0$. We have 
		\begin{align*}
			&	\partial_t f_1(t,x_t) - \partial_t f_2(t,x_t)\\ &\leq \int_{|\alpha|>\rho_0} \left(\frac{\hat \alpha\cdot\nabla f_1(x_t) -\Delta_\alpha f_1(x_t)  }{\langle\Delta_\alpha f_1(x_t)\rangle^{{d}}} - \frac{\hat \alpha\cdot\nabla f_2(x_t) -\Delta_\alpha f_2(x_t) }{\langle\Delta_\alpha f_2(x_t)\rangle^{d+1}}\right)\frac{d\alpha}{|\alpha|^{d}}\\
			&\leq  \int_{|\alpha|>\rho_0} \hat \alpha\cdot\nabla f_1(x_t)\left(\frac{1  }{\langle\Delta_\alpha f_1(x_t)\rangle^{{d}}}-\frac{1  }{\langle\Delta_\alpha f_2(x_t)\rangle^{{d}}}\right)\frac{d\alpha}{|\alpha|^{d}} \\&\ \ \ +\int_{|\alpha|>\rho_0} \left|\frac{\Delta_\alpha f_1(x_t) }{\langle\Delta_\alpha f_1(x_t)\rangle^{d+1}}-\frac{ \Delta_\alpha f_2(x_t) }{\langle\Delta_\alpha f_2(x_t)\rangle^{d+1}}\right|\frac{d\alpha}{|\alpha|^{d}}.
		\end{align*}
		By the elementary inequality
		\begin{align*}
			\left|\frac{1}{\langle a\rangle^d}-	\frac{1}{\langle b\rangle^d}\right|+	\left|\frac{a}{\langle a\rangle^{d+1}}-	\frac{a}{\langle a\rangle^{d+1}}\right|\lesssim _d |a-b|,
		\end{align*}
		we obtain that 
		\begin{align*}
			\partial_t f_1(t,x_t) - \partial_t f_2(t,x_t)& \leq C_d  M(t) (1+L)\int_{|\alpha|>\rho_0}\frac{d\alpha}{|\alpha|^{d+1}}\\
			&\leq C_d  \rho_0^{-1}(1+L)M(t).
		\end{align*}
		This yields that 
		\begin{align*}
			\frac{d}{dt } M(t)\leq C_d  \rho_0^{-1}(1+L)M(t).
		\end{align*}
		By Gronwall's inequality, we obtain \eqref{uni}. This completes the proof.
	\end{proof}

	\section{Appendix}
	\begin{lemma}\label{lem2de}
		Let $f$ be a classical solution to the Muskat equation with initial data satisfying $f'_0\in C_c^\infty(\mathbb{R}^d)$. Suppose $\sup_{t\in[0,T]}\|f(t)\|_{C^{3}}\leq R$. Then there exists $\varepsilon_1=\varepsilon_1(R,T)>0$ such that $\operatorname{supp}f_0\subset B_{\varepsilon_1^{-1}}(0)$ and for any $0<\varepsilon\leq \varepsilon_1$ 
		\begin{align*}
			\sup_{t\in[0,T]}(\|\partial_x(\tilde \chi_{\varepsilon} f(t))\|_{L^\infty}+	\|\partial_{xx}(\tilde \chi_{\varepsilon} f(t))\|_{L^\infty})\leq \varepsilon^\frac{1}{4}.
		\end{align*}
		where we denote $\tilde \chi_{\varepsilon}(x)=1-\chi(\varepsilon x)$.
	\end{lemma}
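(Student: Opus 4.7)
The plan is to reduce the statement to a pointwise spatial decay estimate for $f$ itself and then establish that decay using the compact support of $f_0$ together with the parabolic structure of the Muskat equation.

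First I would expand $\partial_x^k(\tilde\chi_\varepsilon f)$ by Leibniz for $k=1,2$. Since $\|\partial_x^j\tilde\chi_\varepsilon\|_{L^\infty}\leq 2^j\varepsilon^j$ with $\operatorname{supp}\partial_x^j\tilde\chi_\varepsilon\subset\{|x|\leq 2/\varepsilon\}$, and $\|f(t)\|_{C^2}\leq R$, every term in which a derivative lands on the cutoff contributes at most $C(R)\varepsilon$, which is below $\tfrac12\varepsilon^{1/4}$ for $\varepsilon$ small. It therefore suffices to show
\[\sup_{t\in[0,T]}\sup_{|x|\geq 1/\varepsilon}\bigl(|\nabla f(t,x)|+|\nabla^2 f(t,x)|\bigr)\leq c_R\,\varepsilon^{1/4}.\]
Using $\|f\|_{C^3}\leq R$ and applying Landau--Kolmogorov interpolation on unit balls $B_1(y)$ sitting inside $\{|y|\geq 1/\varepsilon-1\}$, one has $|\nabla f(t,y)|\lesssim R^{1/2}\|f(t)\|_{L^\infty(B_1(y))}^{1/2}$ and $|\nabla^2 f(t,y)|\lesssim R^{2/3}\|f(t)\|_{L^\infty(B_1(y))}^{1/3}$, so the lemma reduces to a pointwise bound $\sup_{t\in[0,T]}|f(t,x)|\leq C(R,T)|x|^{-1}$ for $|x|\geq 1/\varepsilon_1$.

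To obtain that, I would use that $f_0$ vanishes on $\{|x|\geq 1/\varepsilon_1\}$, so $f(t,x)=\int_0^t\partial_s f(s,x)\,ds$ there, and bound the Muskat RHS pointwise by splitting the $\alpha$-integral at $|\alpha|=|x|/4$. For small $|\alpha|$, the Taylor bound $|\alpha\cdot\nabla f(x)-\delta_\alpha f(x)|\leq\tfrac12|\alpha|^2\|\nabla^2 f\|_{L^\infty(B_{|\alpha|}(x))}$ feeds back the decay of $\nabla^2 f$ along the tube $B_{|x|/4}(x)\subset\{|y|\geq 3|x|/4\}$ via the interpolation above. For large $|\alpha|$, the kernel decay $|\alpha|^{-(d+1)}$ controls the contribution directly, after symmetrizing the odd part $\alpha\cdot\nabla f(x)/|\alpha|^{d+1}$ modulo the weight perturbation $|\langle\Delta_\alpha f\rangle^{-(d+1)}-1|\lesssim|\Delta_\alpha f|^2$. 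Collecting terms, $M(r):=\sup_{t\in[0,T],\,|y|\geq r}|f(t,y)|$ satisfies a closed inequality of the form $M(r)\leq C(R,T)\bigl(r^{-1}+r^{-1}M(r/2)^{1/3}\bigr)$, which together with the crude bound $M(r)\leq R$ yields $M(r)\leq C(R,T)/r$ for $r\geq 1/\varepsilon_1$ whenever $\varepsilon_1$ is small enough depending on $R$ and $T$.

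The main obstacle is the self-referential bootstrap: the estimate on $\partial_t f$ at $x$ uses the size of $\nabla^2 f$ near $x$, which I control only through the interpolation from the decay of $f$ that I am trying to prove. I plan to handle this by a continuity argument at the level of a weighted quantity such as $W(t):=\sup_x (1+|x|)\,|f(t,x)|$: since $W(0)<\infty$ by the compact support of $f_0$, assuming for contradiction that $t^*\in(0,T]$ is the first time $W$ crosses some prescribed threshold, the Muskat RHS evaluated at the peak point of $(1+|x|)|f(t,x)|$ provides a dissipative linear term (from the fractional-Laplacian part, using the stable sign of the equation) which dominates the bounded nonlinear remainder and precludes further growth, closing the estimate for $\varepsilon_1$ sufficiently small in terms of $R$ and $T$.
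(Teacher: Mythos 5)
Your reduction to a spatial decay estimate for $f$ does not close, and the key recursion you claim is quantitatively wrong. In the small-$|\alpha|$ region the Taylor bound gives
\[
\int_{|\alpha|\le |x|/4}\frac{|\alpha|^2}{|\alpha|^{d+1}}\,\|\nabla^2 f\|_{L^\infty(B_{|x|/4}(x))}\,d\alpha \;\sim\; |x|\,\|\nabla^2 f\|_{L^\infty(\{|y|\ge 3|x|/4\})},
\]
i.e.\ the $\alpha$-integral produces a factor $|x|$, not $|x|^{-1}$. Feeding in the interpolation $\|\nabla^2 f\|\lesssim R^{2/3}M^{1/3}$, the pointwise bound on $\partial_t f(x)$ you obtain is of order $|x|\,M(|x|/2)^{1/3}$, so the closed inequality should read $M(r)\le C(R,T)\bigl(r^{-1}+r\,M(r/2)^{1/3}\bigr)$ rather than $M(r)\le C\bigl(r^{-1}+r^{-1}M(r/2)^{1/3}\bigr)$. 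Starting from the crude bound $M\le R$, the iteration then gives $M(r)\lesssim rR^{1/3}$ and diverges; even under the target ansatz $M(r)\sim r^{-1}$ the small-$\alpha$ term is of size $|x|^{2/3}$, which is inconsistent with $|\partial_t f(x)|\lesssim |x|^{-1}$. The cube-root loss in the interpolation would force $M(r)\lesssim r^{-6}$ to make this term decay like $r^{-1}$, so the bootstrap cannot be closed this way. Your fallback, a continuity argument for $W(t)=\sup_x(1+|x|)|f(t,x)|$, is only gestured at: the maximizer of the weighted quantity is not a maximizer of $f$, so $\delta_\alpha f$ has no sign there and the "dissipative linear term" you invoke is exactly what needs to be proved; one would have to write the equation for $(1+|x|)f$ and control the weight--operator commutators, which you have not done.

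The paper avoids the decay question entirely: it sets $g=\tilde\chi_{\varepsilon}f$, notes $g(0)=0$ since $\operatorname{supp}f_0\subset B_{\varepsilon^{-1}}(0)$, writes $\partial_t g=N(g)+S(\tilde\chi_\varepsilon,f)$ with the commutator $S$ of size $O(\varepsilon^{1/2}(1+R)^3)$, differentiates twice, and evaluates at the maximum of $\partial_{ij}g$, where the gradient vanishes and the nonlocal terms have a favorable sign; the remaining terms are bounded by $(1+R)^2\|\nabla^2 g\|_{L^\infty}$, and Gronwall from zero initial data gives $\|\nabla^2 g(t)\|_{L^\infty}\lesssim \varepsilon^{1/2}(1+R)^3Te^{(1+R)^2T}\le\varepsilon^{1/4}$. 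I would recommend abandoning the decay-of-$f$ route and adopting an argument of this commutator-plus-Gronwall type, which only uses that the localized function starts from zero and is forced by an $O(\varepsilon^{1/2})$ term.
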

	\begin{proof}
		We have the equation
		\begin{align*}
			\partial_{t} f(t, x)=\int_{\mathbb{R}^d} \frac{\alpha \cdot \nabla f(t, x)-\delta_{\alpha} f(t, x)}{\left\langle\Delta_{\alpha} f(t, x)\right\rangle^{d+1}} \frac{d \alpha}{|\alpha|^{d+1}}:=N(f)(t,x).
		\end{align*}
		We have 
		\begin{align*}
			\partial_t (\tilde \chi_{\varepsilon_1}f)=N(\tilde \chi_{\varepsilon_1}f)+S(\tilde \chi_{\varepsilon_1},f),
		\end{align*}
		where $S(\tilde \chi_{\varepsilon_1},f)=\tilde \chi_{\varepsilon_1}N(f)-N(\tilde \chi_{\varepsilon_1}f)$ is the commutator. For any $i,j\in 1,\cdots, d$, we have 
		\begin{align*}
			\partial_t \partial_{ij}(\tilde \chi_{\varepsilon_1}f)=\partial_{ij}N(\tilde \chi_{\varepsilon_1}f)+\partial_{ij}S(\tilde \chi_{\varepsilon_1},f).
		\end{align*}
		Denote $\tilde \chi_{\varepsilon_1}f=g$, note that 
		\begin{align*}
			\partial_{ij}N(g)=&\int_{\mathbb{R}^d} \frac{\hat \alpha \cdot \nabla \partial_{ij}g}{\langle\Delta_\alpha g\rangle^{d+1}}\frac{d\alpha}{|\alpha|^d}-\int_{\mathbb{R}^d} \frac{\Delta_\alpha\partial_{ij}g}{\langle\Delta_\alpha g\rangle^{d+1}}\frac{d\alpha}{|\alpha|^d}\\
			&-(d+1)\int_{\mathbb{R}^d} \frac{(\hat \alpha \cdot \nabla \partial_{j}g-\Delta_\alpha \partial_j g)\Delta_\alpha g \Delta_\alpha \partial_ig}{\langle\Delta_\alpha g\rangle^{d+1}}\frac{d\alpha}{|\alpha|^d}\\
			&-(d+1)\int_{\mathbb{R}^d} \frac{(E_\alpha \partial_ig \Delta_\alpha g+E_\alpha g\Delta_\alpha \partial_i g)\Delta_\alpha \partial_j g}{\langle\Delta_\alpha g\rangle^{d+3}}\frac{d\alpha}{|\alpha|^d}\\
			&+(d+1)(d+3)\int_{\mathbb{R}^d} \frac{(E_\alpha g (\Delta_\alpha g)^2\Delta_\alpha \partial_i g\Delta_\alpha \partial_j g}{\langle\Delta_\alpha g\rangle^{d+5}}\frac{d\alpha}{|\alpha|^d}.
		\end{align*}
		Take $x_{i,j}$ such that $\partial_{ij}g(t,x_{ij})=\sup_x\partial_{ij}g(t,x)$. Denote $M_{ij}(t)=\sup_x\partial_{ij}g(t,x)$ and $A(t)=\|\nabla ^2g(t)\|_{L^\infty}$. Then one has $\nabla \partial_{ij} g(t,x_{ij})=0$ and $\Delta_\alpha \partial_{ij} g(t,x_{ij})\geq 0$. Moreover, one has 
		\begin{align*}
			&\left|\int_{\mathbb{R}^d} \frac{\hat \alpha \cdot \nabla \partial_{j}g\Delta_\alpha g \Delta_\alpha \partial_ig}{\langle\Delta_\alpha g\rangle^{d+1}}\frac{d\alpha}{|\alpha|^d}\right|\\
			&\leq RA(t)\int_{|\alpha|\geq 1} \frac{d\alpha}{|\alpha|^{d+1}}+\frac{1}{2}\int_{|\alpha|\leq 1} \hat \alpha \cdot \nabla \partial_{j}g\left(\frac{\Delta_\alpha g \Delta_\alpha \partial_ig}{\langle\Delta_\alpha g\rangle^{d+1}}-\frac{\Delta_{-\alpha} g \Delta_{-\alpha} \partial_ig}{\langle\Delta_{-\alpha} g\rangle^{d+1}}\right)\frac{d\alpha}{|\alpha|^d}.
		\end{align*}
		Observe that 
		\begin{align*}
			\left|\frac{\Delta_\alpha g \Delta_\alpha \partial_ig}{\langle\Delta_\alpha g\rangle^{d+1}}-\frac{\Delta_{-\alpha} g \Delta_{-\alpha} \partial_ig}{\langle\Delta_{-\alpha} g\rangle^{d+1}}\right|\leq  |\alpha|R^2.
		\end{align*}
		Hence 
		\begin{align*}
			&\left|\int_{\mathbb{R}^d} \frac{\hat \alpha \cdot \nabla \partial_{j}g\Delta_\alpha g \Delta_\alpha \partial_ig}{\langle\Delta_\alpha g\rangle^{d+1}}\frac{d\alpha}{|\alpha|^d}\right|\lesssim (1+R)^2A(t).
		\end{align*}
		Other terms can be estimated similarly, which leads to 
		\begin{align*}
			\left.\partial_{ij}N(g)\right|_{x=x_{ij}}\lesssim (1+R)^2A(t).
		\end{align*}
		On the other hand, we have $|\nabla \tilde \chi_{\varepsilon_1}|\leq \varepsilon_1$. It is easy to check that 
		\begin{align*}
			\|\partial_{ij}S(\tilde \chi_{\varepsilon_1},f)\|_{L^\infty}\lesssim \varepsilon_1^\frac{1}{2} (1+R)^3.
		\end{align*}
		Then we have for any $t\in[0,T]$
		\begin{align}\label{blabla}
			\frac{d}{dt} M_{ij}(t)\lesssim  (1+R)^2A(t)+\varepsilon_1^\frac{1}{2} (1+R)^3.
		\end{align}
		Take $\varepsilon_1$ small enough such that $\operatorname{supp}f_0\subset B_{\varepsilon_1^{-1}}(0)$, then $M_{ij}(0)=0$. 
		Integrate \eqref{blabla} in time we obtain
		\begin{align*}
			M_{ij}(t)\lesssim (1+R)^2\int_0^tA(\tau)d\tau+\varepsilon_1^\frac{1}{2} (1+R)^3t.
		\end{align*}
		We can estimate $m_{ij}(t)=\sup_x(-\partial_{ij}g(t,x))$ similarly. Summing up in $i,j$, we obtain 
		\begin{align*}
			A(t)\lesssim (1+R)^2\int_0^tA(\tau)d\tau+\varepsilon_1^\frac{1}{2} (1+R)^3t.
		\end{align*}
		By Gronwall's inequality we have 
		\begin{align*}
			A(t)&\leq \varepsilon_1^\frac{1}{2}(1+R)^3t+\varepsilon_1^\frac{1}{2}(1+R)^5\int _0^t \tau e^{\tau(1+R)^2}d\tau\\
			&\leq \varepsilon_1^\frac{1}{2}(1+R)^3T+\varepsilon_1^\frac{1}{2}(1+R)^3Te^{(1+R)^2T},\ \ \forall t\in[0,T].
		\end{align*}
		This implies the result in the lemma by taking $\epsilon_1$ small enough such that $(1+R)^3Te^{(1+R)^2T}\leq \epsilon_1^{-\frac14}$. 
	\end{proof}
	The following lemma bounds the diffusive term from above by strictly negative terms.
	\begin{lemma}\label{lemdissip}
		Under the assumption \eqref{con2},
		\begin{align*}
			&\int_{\mathbb{R}}\delta_\alpha f_x(y)-\delta_\alpha f_x(x)\frac{d\alpha}{\alpha^2}\\
			&\quad\quad\quad\leq -2\int_0^z\delta_\alpha\rho_t (z)+\delta_{-\alpha}\rho_t (z)\frac{d\alpha}{\alpha^2}+2\int_z^\infty\rho_t (\alpha+z)-\rho_t (\alpha)-\rho_t (z)\frac{d\alpha}{\alpha^2},
		\end{align*}
		where $z=|x-y|$.
	\end{lemma}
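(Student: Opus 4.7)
My approach is to symmetrize the principal-value integral, split at $\alpha = z$, and apply the modulus \eqref{con2} with two complementary base points.

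WLOG take $x>y$ so $z=x-y$. The substitution $\alpha\mapsto-\alpha$ on the negative half of $\mathbb{R}$ recasts the LHS as
\begin{align*}
\int_0^\infty \bigl[g(x{+}\alpha)+g(x{-}\alpha)-g(y{+}\alpha)-g(y{-}\alpha)-2(g(x){-}g(y))\bigr]\,\frac{d\alpha}{\alpha^2},
\end{align*}
where $g=f_x$. In the application context of Lemma \ref{lemmodulus} the extremality $g(x)-g(y)=\rho_t(z)$ from \eqref{coeq} and the matching first derivatives $g'(x)=g'(y)$ from \eqref{2de} hold, so the integrand is $O(\alpha^2)$ near $\alpha=0$ and the integral is finite.

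For $\alpha\in(0,z]$, I would apply \eqref{con2} from the two base points $y$ and $x$: the upper bounds $g(x\pm\alpha)\le g(y)+\rho_t(|z\pm\alpha|)$ and the lower bounds $g(y\pm\alpha)\ge g(x)-\rho_t(|z\mp\alpha|)$. Subtracting and using the extremality gives the pointwise integrand bound $2[\rho_t(z+\alpha)+\rho_t(z-\alpha)-2\rho_t(z)]$, which equals $-2[\delta_\alpha\rho_t(z)+\delta_{-\alpha}\rho_t(z)]$; integrating yields the first term of the RHS.

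For $\alpha>z$ the plan is to exploit the reflection identities $x-\alpha=y-(\alpha-z)$ and $y+\alpha=x+(\alpha-z)$ to apply \eqref{con2} at the reduced separation $\alpha-z$ in combination with bounds at separation $z+\alpha$, and then to use the subadditivity $\rho_t(\alpha)\le\rho_t(z)+\rho_t(\alpha-z)$ (from concavity of $\rho_t$ with $\rho_t(0)=0$) to collapse the four-term combination into $2[\rho_t(\alpha+z)-\rho_t(\alpha)-\rho_t(z)]$. The main obstacle is precisely this large-$\alpha$ case: a direct pointwise application of the modulus at separation $z$ produces only the weaker bound $2[\rho_t(z+\alpha)+\rho_t(\alpha-z)-2\rho_t(z)]$, and to sharpen it to the subadditivity defect $\rho_t(\alpha+z)-\rho_t(\alpha)-\rho_t(z)$ one must group the four terms $g(x\pm\alpha)-g(y\pm\alpha)$ with care, treating the auxiliary function $h(p):=g(p)-g(p-z)$ (which satisfies $h(x)=\rho_t(z)=\sup h$ thanks to extremality) and using subadditivity to absorb the surplus $\rho_t(\alpha-z)+\rho_t(\alpha)-\rho_t(z)$.
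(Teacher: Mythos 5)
Your symmetrization and your treatment of the range $0<\alpha\le z$ are correct and consistent with the paper (which reaches the same estimates by changes of variables rather than by symmetrizing first); you are also right that the argument needs the extremality \eqref{coeq} in addition to \eqref{con2}.

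The gap is the range $\alpha>z$, exactly where you locate ``the main obstacle'', and the repair you sketch cannot work: the pointwise bound
\begin{equation*}
g(x+\alpha)+g(x-\alpha)-g(y+\alpha)-g(y-\alpha)-2\rho_t(z)\ \le\ 2\bigl[\rho_t(\alpha+z)-\rho_t(\alpha)-\rho_t(z)\bigr]
\end{equation*}
is false, so no grouping of the four terms plus subadditivity can deliver it. Indeed, with $h(p)=g(p)-g(p-z)$ the left side is $h(x+\alpha)+h(x-\alpha)-2h(x)$ with $h(x)=\sup h=\rho_t(z)$, and the best one can extract from \eqref{con2} at a single $\alpha$ is $h(x\pm\alpha)-h(x)\le\rho_t(\alpha+z)+\rho_t(\alpha-z)-2\rho_t(z)$; the surplus $\rho_t(\alpha-z)+\rho_t(\alpha)-\rho_t(z)$ over your target is \emph{nonnegative} (it is $\ge\rho_t(\alpha-z)$ by monotonicity), so subadditivity cannot ``absorb'' it. A concrete counterexample: take $\rho_t(s)=\min(s,1)$, $y=0$, $x=z=1$, and $g(p)=\mathrm{dist}(p,2\mathbb{Z})$. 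Then $g$ obeys \eqref{con2} and $g(1)-g(0)=\rho_t(1)$, but at $\alpha=2$ the left side equals $g(3)+g(-1)-g(2)-g(-2)-2=0$ while the right side is $-2$. The lemma is nevertheless true because the inequality only needs to hold after integration against $\alpha^{-2}\,d\alpha$, and that is how the paper proves it: it rearranges, e.g.,
\begin{equation*}
\int_\epsilon^\infty\frac{g(x+\alpha)-g(y+\alpha)}{\alpha^2}\,d\alpha=\int_{x+\epsilon}^\infty g(\beta)\left(\frac{1}{(\beta-x)^2}-\frac{1}{(\beta-y)^2}\right)d\beta-\int_{y+\epsilon}^{x+\epsilon}\frac{g(\beta)}{(\beta-y)^2}\,d\beta,
\end{equation*}
and only then inserts the envelope $g(\beta)\le g(x)+\rho_t(\beta-y)-\rho_t(z)$ against the positive weight $\frac{1}{(\beta-x)^2}-\frac{1}{(\beta-y)^2}$; the telescoping between the two shifted copies of the integral is what replaces $\rho_t(\alpha+z)+\rho_t(\alpha-z)-2\rho_t(z)$ by $\rho_t(\alpha+z)-\rho_t(\alpha)-\rho_t(z)$. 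You need this integral-level comparison (or an equivalent one) to close the case $\alpha>z$; a pointwise estimate will not do.
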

	\begin{proof}
		Note that 
		\begin{align*}
			f_x(\alpha)\leq f_x(x)+\rho_t(\alpha-y)-\rho_t(x-y),\  \forall \alpha\geq y,\\
			f_x(\alpha)\geq f_x(y)-\rho_t(x-\alpha)+\rho_t(x-y), \ \forall \alpha\leq x.
		\end{align*}
		By definition, for $\epsilon>0$ small enough, we have
		$$
		\int_{-\infty}^{-\epsilon}(\delta_\alpha f_x(y)-\delta_\alpha f_x(x))\frac{d\alpha}{\alpha^2}= -\int_{\epsilon}^{\infty}\frac{\rho_t(x-y)}{\alpha^2}d\alpha+\int_{\epsilon}^{\infty}\frac{f_x(x+\alpha)-f_x(y+\alpha)}{\alpha^2}d\alpha.
		$$
		We write the latter term as
		\begin{align*}
			&\int_{\epsilon}^{\infty}\frac{f_x(x+\alpha)-f_x(y+\alpha)}{\alpha^2}d\alpha=\int_{x+\epsilon}^{\infty}\frac{f_x(\alpha)}{(\alpha-x)^2}d\alpha-\int_{y+\epsilon}^{\infty}\frac{f_x(\alpha)}{(\alpha-y)^2}d\alpha\\
			&=\int_{x+\epsilon}^{\infty}f_x(\alpha)(\frac{1}{(\alpha-x)^2}-\frac{1}{(\alpha-y)^2})d\alpha-\int_{y+\epsilon}^{x+\epsilon}\frac{f_x(\alpha)}{(\alpha-y)^2}dz\\
			&\leq \int_{x+\epsilon}^{\infty}(f_x(x)+\rho_t(\alpha-y)-\rho_t(x-y))(\frac{1}{(\alpha-x)^2}-\frac{1}{( \alpha-y)^2})d\alpha-\int_{y+\epsilon}^{x+\epsilon}\frac{f_x(\alpha)}{(\alpha-y)^2}d\alpha.
		\end{align*}
		Direct calculation leads to
		\begin{align*}
			&\int_{x+\epsilon}^{\infty}(f_x(x)+\rho_t(\alpha-y)-\rho_t(x-y))(\frac{1}{(\alpha-x)^2}-\frac{1}{(\alpha-y)^2})d\alpha\\
			&=\int_{\epsilon}^{\infty}\frac{f_x(x)+\rho_t(x-y+\alpha)-\rho_t(x-y)}{\alpha^2}d\alpha-\int_{x-y+\epsilon}^{\infty}\frac{f_x(x)+\rho(\alpha)-\rho(x-y)}{\alpha^2}d\alpha\\
			&=\int_{\epsilon}^{x-y+\epsilon}\frac{f_x(x)+\rho_t(x-y+\alpha)-\rho_t(x-y)}{\alpha^2}d\alpha+\int_{x-y+\epsilon}^{\infty}\frac{\rho_t(x-y+\alpha)-\rho_t(\alpha)}{\alpha^2}d\alpha.
		\end{align*}
		Observe that 
		\begin{align*}
			&\int_{2x-y+\epsilon}^{\infty}\frac{f_x(x)+\rho_t(\alpha-y)-\rho_t(x-y)}{(\alpha-x)^2}d\alpha\\
			&\ \ \quad\quad -\int_{x+\epsilon}^{\infty}\frac{f_x(x)+\rho_t(\alpha-y)-\rho_t(x-y)}{(\alpha-y)^2}d\alpha-\int_{x-y+\epsilon}^{\infty}\frac{\rho_t(x-y)}{\alpha^2}d\alpha\\
			& =\int_{x-y+\epsilon}^{\infty}\frac{\rho_t(x-y+\alpha)-\rho_t(x-y)-\rho_t(\alpha)}{\alpha^2}d\alpha.
		\end{align*}
		Hence
		\begin{align*}
			&\int_{-\infty}^{-\epsilon}(\delta_\alpha f_x(y)-\delta_\alpha f_x(x))\frac{d\alpha}{\alpha^2}\\
			&\leq -\int_{\epsilon}^{\infty}\frac{\rho_t(x-y)}{\alpha^2}d\alpha+\int_{\epsilon}^{\infty}\frac{f_x(x)+\rho_t(x-y+\alpha)-\rho_t(x-y)}{\alpha^2}d\alpha\\
			&\quad\quad-\int_{x-y+\epsilon}^{\infty}\frac{f_x(x)+\rho_t(\alpha)-\rho_t(x-y)}{\alpha^2}d\alpha-\int_{\epsilon}^{x-y+\epsilon}\frac{f_x(y+\alpha)}{\alpha^2}d\alpha\\
			&\leq -\int_{\epsilon}^{x-y+\epsilon}\frac{\rho_t(x-y)}{\alpha^2}d\alpha+\int_{\epsilon}^{x-y+\epsilon}\frac{f_x(x)+\rho_t(x-y+\alpha)-\rho_t(x-y)}{\alpha^2}d\alpha\\
			&\quad\quad-\int_{\epsilon}^{x-y+\epsilon}\frac{f_x(\alpha+y)}{\alpha^2}d\alpha+\int_{x-y+\epsilon}^{\infty}\frac{\rho_t(x-y+\alpha)-\rho_t(x-y)-\rho_t(\alpha)}{\alpha^2}d\alpha.
		\end{align*}
		The integral in $(0,+\infty)$ is similar. Since $f_x(x)-f_x(y+\alpha)\leq \rho_t(x-y-\alpha)$, we get the result by taking $\epsilon\rightarrow 0$.  This completes the proof.
	\end{proof}
		{\bf{Acknowledgments.}}  Quoc-Hung Nguyen  is supported by the Academy of Mathematics and Systems Science, Chinese Academy of Sciences startup fund; CAS Project for Young Scientists in Basic Research, Grant No. YSBR-031;  and the National Natural Science Foundation of China (No. 12288201);  and the National Key R$\&$D Program of China under grant 2021YFA1000800. Ke Chen is supported by the Research Centre for Nonlinear Analysis at The Hong Kong Polytechnic University.
	
\end{document}